\documentclass[11pt]{amsart}

\usepackage[english]{babel}

\usepackage{amsmath}
\usepackage{graphicx}
\usepackage[colorlinks=true, allcolors=blue]{hyperref}
\usepackage[mathscr]{euscript}
\usepackage{amsthm}
\usepackage{amssymb}
\usepackage{amsfonts,amscd}
\usepackage[final]{showkeys}
\usepackage{fullpage}
\usepackage{caption}
\usepackage{subfig}
\usepackage{enumitem}
\usepackage{arydshln}
\setlist[enumerate,1]{label={\rm (\roman*)},leftmargin=2.5em}
\usepackage{multirow,multicol,booktabs}

\usepackage{rotating,pdflscape}

\usepackage{tikz}
\usetikzlibrary{cd,positioning,automata,arrows,shapes,matrix,calc,arrows, decorations.pathmorphing}
\tikzset{main node/.style={circle,draw,minimum size=0.3em,inner
sep=0.5pt}}
\tikzset{small node/.style={minimum size=0.1pt,inner
sep=0.1pt}}
\usepackage{xspace}
\usepackage{adjustbox}
\theoremstyle{plain}
\newtheorem{theorem}{Theorem}[section]
\newtheorem{lemma}[theorem]{Lemma}
\newtheorem{prop}[theorem]{Proposition}

\newtheorem{cor}[theorem]{Corollary}
\theoremstyle{definition}
\newtheorem{defn}[theorem]{Definition}
\newtheorem{exa}[theorem]{Example}

\theoremstyle{remark}
\newtheorem{rmk}[theorem]{Remark}
\numberwithin{equation}{section}

\newcommand{\al}{\alpha}
\newcommand{\be}{\beta}
\newcommand{\ep}{\varepsilon}

\newcommand{\R}{{\mathbb R}}
\newcommand{\Z}{{\mathbb Z}}

\newcommand{\ra}{\rightarrow}

\newcommand{\simproots}{\Pi}
\newcommand{\posroots}{{\Phi_+}}
\newcommand{\allroots}{\Phi}

\newcommand{\res}{\mathscr{D}}

\newcommand{\lc}{L^c}
\newcommand{\wnn}{w_{N}}
\newcommand{\mset}[1]{\{\!\{#1\}\!\}}
\newcommand{\seqd}{\underline{d}}

\newcommand{\calf}{\mathcal{F}}

\newcommand{\cali}{\mathcal{I}}
\newcommand{\calj}{\mathcal{J}}

\newcommand{\calm}{\mathcal{M}}
\newcommand{\cals}{\mathcal{S}}

\newcommand{\bfj}{\mathbf{j}}

\newcommand{\mr}{m}
\newcommand{\pr}{p}
\newcommand{\rr}{r}
\newcommand{\nine}{\mathbf{9}}
\newcommand{\ten}{\mathbf{10}}
\newcommand{\ob}{\Omega}
\newcommand{\fob}{\Omega_F}
\newcommand{\mob}{\Omega_M}
\newcommand{\nn}{\mathrm{NN}}
\newcommand{\pra}{P_R(\al)}

\DeclareMathOperator{\supp}{\rm{Supp}}
\DeclareMathOperator{\wt}{\rm{wt}}
\DeclareMathOperator{\interior}{\rm{Int}}

\newcommand{\se}{\subseteq}
\newcommand{\ip}[1]{\langle#1\rangle}

\newcommand{\inverse}{^{-1}}

\newcommand{\abs}[1]{\lvert #1 \rvert}

\makeatletter
\newcommand{\exterior}[1]{\mathop{\mathpalette\exterior@{#1}}}
\newcommand{\exterior@}[2]{%
  \raisebox{\depth}{%
  \fontsize{\sf@size}{0}%
  \m@th
  $\ifx#1\displaystyle\textstyle\else#1\fi\bigwedge$}%
  ^{\mspace{-2mu}#2}%
  \kern-\scriptspace
}
\makeatother

\title{Perfect matchings, Fano planes, and orthogonal bases of type $E_8$}
\author{R.M. Green and Tianyuan Xu}

\keywords{perfect matching, Fano plane, root system, generating function, Rothe
diagram}
\subjclass{Primary: 17B22; Secondary: 05A15, 05E18.}

\begin{document}
\begin{abstract}
We use perfect matchings and labelled Fano planes to construct and study the
$2025$ orthogonal bases of positive roots in the $E_8$ root system. The set of
these bases forms a highly structured, Bruhat-like graded poset $(\ob, \leq_Q)$
whose rank function can be computed from the cardinalities of so-called
generalized Rothe diagrams. We give combinatorial characterizations of these
diagrams in terms of matchings and Fano planes, and we explain how to compute
the ranks of the elements of $\ob$ using suitable combinatorial statistics such
as the weights of perfect matchings. We establish simple formulas for the rank
generating functions of $\ob$ and of its 50 congruence classes under a natural
order congruence relation. Our derivation of the generating functions contains
some intermediate results on general perfect matchings and labelled Fano planes
that can be stated without mentioning root systems and may be of independent
interest. 
\end{abstract}

\maketitle
\section{Introduction}\label{sec:intro}
The purpose of this paper is to use the combinatorics of perfect matchings and
Fano planes to gain insight into certain combinatorial structures associated
with the $E_8$ root system $\Phi=\Phi(E_8)$. Recall that the $E_8$ lattice gives
the densest possible packing of spheres in $8$-dimensional Euclidean space
\cite{viazovska17}, and that the $E_8$ root system consists of the $240$
elements of the lattice with minimal nonzero norm. The roots can be partitioned
into $120$ positive and $120$ negative roots by choosing a vector space total
order on the ambient vector space $V$. There are $2025$ orthogonal bases for $V$
that consist entirely of positive roots, and the set $\ob$ of all such bases
appears in the physics literature, where it has been used to give parity proofs
of the Kochen--Specker theorem in quantum mechanics \cite{waegell15}. 

The set $\ob$ is naturally a $W$-set for the Weyl group $W=W(E_8)$ associated to
$\Phi$ (Section \ref{sec:2025}). In previous work \cite{gx5}, we showed that
$\ob$ can be identified with a natural spanning set of a certain Macdonald
representation of $W$ \cite[Section 1]{gx5}. We defined a certain function
$\lambda: \ob\ra \Z$ that equips $\ob$ with the structure of a quasiparabolic
$W$-set, which means that $\ob$ forms a special type of $W$-set defined by
Rains--Vazirani \cite{rains13} that carries a graded, Bruhat-like partial order
whose rank function is given by $\lambda$. The function $\lambda$ is called the
level function in the context of quasiparabolic sets, and it can be used to
deform the $W$-action on $\ob$ in a suitable way to create a module for the
Iwahori--Hecke algebra associated to $W$ \cite[Theorem 7.1]{rains13}. In
\cite[Section 7.1]{gx5}, we showed using a computer that the generating function
$PS_X(q)=\sum_{R\in \ob} q^{\lambda(R)}$, known as the Poincar\'e polynomial of
$\ob$, has a nice factorization \begin{equation} \label{eq:ps_X}
PS_\ob(q)=[3]_q[5]_q[9]_q[15]_q \end{equation} into the quantum integers
$[d]_q=1+q+q^2+\dots+q^{d-1}$ for $d\in \{3,5,9,15\}$. In \cite{gx6}, we defined
a generalized Rothe diagram, $\res(R)$, for every element $R\in \ob$, and we
showed that $\lambda(R)$ equals the cardinality of $\res(R)$, thus giving
another interpretation of the level function $\lambda$.   

The results mentioned in the last paragraph have all been developed in the more
general setting of so-called $k$-roots \cite[Section 2.2]{gx6}. In particular,
these results have direct analogues in type $D_{2k}$ for any $k\in \Z_{\ge 2}$,
where the orthogonal bases of positive roots can be naturally constructed via
perfect matchings of the set $\{1,2, \dots, 2k\}$. The generalized Rothe
diagrams $\res(R)$ in type $D_{2k}$ recover the traditional Rothe diagrams of
permutations and the so-called involution Rothe diagrams associated to
fixed-point-free involutions in symmetric groups by Hamaker--Marberg--Pawlowski
\cite{hamaker20} (see \cite[Theorem 3.11 and Theorem 3.14]{gx6}), which is why
we name $\res(R)$ the generalized Rothe diagram in the more general setting. The
main contribution of this paper is an extension of this combinatorial framework
to type $E$.

To be more precise, we introduce two combinatorial models that make it possible
to understand the orthogonal bases in $\ob$ in the same concrete way that
perfect matchings allow us to understand the orthogonal bases in type $D_{2k}$.
These are the ``matching model'' and ``Fano model", and they provide a neat
dichotomy for the elements of $\ob$. We show that there exists a partition
$\ob=\ob_M \cup \ob_F$ into two parts, where $\ob_M$ can be naturally and
bijectively constructed from the 945 perfect matchings of the size-10 set
$\ten=\{0,1,2,\dots, 8,9\}$ (Definition \ref{defn:mob_construction}), and
$\ob_F$ from the 1080 labellings of the Fano plane using labels from the set
$\nine=\{1,2,\dots, 8,9\}$ (Definition \ref{defn:fob_construction}).
We use these models to give precise descriptions for the generalized Rothe
diagrams for the elements of $\ob$, in forms reminiscent of traditional Rothe
diagrams (Theorem \ref{thm:mob_res}, Theorem \ref{thm:fob_res}), and we use
these descriptions to obtain simple formulas for the level function (Proposition
\ref{prop:mob_level}, Proposition \ref{prop:fob_level}). In particular, we show
that if $R$ is an orthogonal basis in $\ob_M$ arising from a perfect matching
$M$, then the level of $R$ equals the weight of $M$ defined by
Deodhar--Srinivasan \cite{deodhar}, as well as the cardinality of the involution
Rothe diagram associated to $M$ by Hamaker--Marberg--Pawlowski \cite{hamaker20}. 

As applications of the matching and Fano models for $\ob$, we will give
conceptual proofs for a number of level generating functions. This includes the
proof of Equation \eqref{eq:ps_X}, which is in fact the original motivation of
this paper, but we will do much more: the map $\sigma : \ob \ra V$ defined by
$\sigma(R) = \Sigma_{\be \in R} \be$ gives a natural equivalence relation
$\sim_\sigma$ on $\ob$ whose equivalence classes are the fibres of $\sigma$, and
we will derive Equation \eqref{eq:ps_X} after first computing the level
generating function for each individual fibre. The relation $\sim_\sigma$ has
been studied in \cite[Section 5.5]{gx5} and enjoys many nice properties: it has
50 equivalence classes, each of which is an Eulerian interval with respect to
the quasiparabolic order $\le_Q$ on $\ob$, and $\sim_\sigma$ also gives
a poset congruence on $\ob$ with respect to $\le_Q$ in the sense of
\cite{reading02}. As we will explain in Section \ref{sec:sums}, the 50
equivalence classes form a distributive lattice that can be understood using
the natural partial order on the root lattice, or the left weak Bruhat order on
the Weyl group $W$ of $\Phi$, or the heap poset of a fully commutative element
in $W$, or a certain set $\cals$ of length-4 integer sequences with small
positive entries (Equation \eqref{eq:seqs}). In addition, the 50 equivalence
classes of $\sim_\sigma$ are highly compatible with the matching--Fano
dichotomy, with $\ob_M$ being the union of 42 of the equivalence classes and
$\ob_F$ the other 8. The dichotomy also propagates in natural ways to the
contexts of the aforementioned weak Bruhat order, heap poset, and integer
sequences, in ways that we will detail in Section \ref{sec:seq_to_sum}. The 42
equivalence classes that comprise $\mob$ are in bijection with the nonnesting
perfect matchings of the set $\ten$, and the details of the related Catalan
combinatorics also play an important role in the arguments of Section
\ref{sec:seq_to_sum}.

Our main results concerning generating functions are as follows. For each
sequence $\seqd\in \cals$, we will prove that the level generating function of the
corresponding equivalence class in $\ob$ can be expressed in a remarkably simple
formula in terms of $\seqd$ (Theorem \ref{thm:gen_func}). The matching and Fano
models for $\ob$ allow us to essentially reduce the proof of these formulas to
two known facts, one by Watson on a weight generating function of perfect
matchings \cite{watson14}, and another by the first-named author on a suitable
generating function on Fano planes labelled by any set of seven elements
\cite{the240}. We will use the generating functions for the
$\sim_\sigma$-classes to obtain the level generating functions for both $\ob_M$
and $\ob_F$, which we will then add up
to derive $PS_{\ob}(q)$ (Theorem \ref{thm:endgame}).

A key ingredient used for establishing the matching and Fano models is a
particular coordinatization of the root system $\Phi(E_8)$. This
coordinatization scheme,  due to Chevalley \cite[4.VII]{chevalley} (Section
\ref{sec:Phi}), is different from the more widely known Bourbaki conventions
used in standard references such as \cite{bourbaki} and \cite{humphreys90} and
used in our previous works \cite{gx5,gx6}. In hindsight, it is in fact fairly
straightforward to construct the elements of $\ob$ via perfect matchings and
labelled Fano planes using the Chevalley coordinates, although the calculation
of the generalized Rothe diagrams and level function still requires significant
work. Another nice feature of the Chevalley coordinates is that they lead to
very simple expressions, in terms of the matching and Fano models, for the sums
$\sigma(R)$ of the orthogonal bases $R\in \ob$ (Proposition
\ref{prop:seq_to_sum}).

The results of this paper have natural analogues for the root system of type
$E_7$ that can be proved using the same techniques (Remark \ref{rmk:e7}),  and
they also have applications for other $k$-roots by restriction (Remark
\ref{rmk:restriction}). Our proof for the level
generating function for the orthogonal bases given by the matching model
(Equation \eqref{eq:mob_gen}) readily generalizes to the context of orthogonal
bases of type $D_{2k}$. Furthermore, our
treatment of the matching and Fano models has several byproducts that may be of
independent interest as results concerning perfect matchings and labelled Fano
planes, including a multiset equality concerning the left endpoints and right
endpoints of  perfect matchings (Corollary \ref{cor:lr}, Remark \ref{rmk:lr})
and a notion of ``inversions'' for labelled Fano planes (Definition
\ref{def:fob_height}, Lemma \ref{lem:fano_inv}).

The rest of the paper is organized as follows. We set up the background on the
set $\ob$ in Section \ref{sec:state}, including Chevalley's coordinatization of
the $E_8$ root system $\Phi$ in Section \ref{sec:Phi}, notions related to the
quasiparabolic structure of $\ob$ in Section \ref{sec:2025}, and useful facts
from \cite{gx5} on the equivalence classes of the relation $\sim_\sigma$.
Sections \ref{sec:mob} and \ref{sec:fob} are two parallel sections that develop
the details of the matching and Fano models for $\ob$, with each of these
sections consisting of three subsections that discuss how to use the  models to
construct orthogonal bases, determine the generalized Rothe diagrams of the
bases, and calculate the levels of the bases. Section \ref{sec:enum} deals with
level generating functions. Section \ref{sec:seq_to_sum} treats the technical
details we need in order to express the generating functions for the
$\sim_\sigma$-classes via the sequences of the set $\cals$, including how the
matching and Fano dichotomy of $\ob$ propagates to the sequences via suitable
bijections, and Section \ref{sec:gen_func} then combines these details with the
results of sections \ref{sec:mob}--\ref{sec:fob} to compute all the relevant
generating functions. Section \ref{sec:conclude} contains some concluding
remarks. Finally, the appendix displays two tables in landscape orientation that
are too wide to fit in the text in portrait orientation.

%
\section{Background}\label{sec:state}
This section introduces the main objects of study in this paper. In Section
\ref{sec:Phi}, we recall the details of the $E_8$ root system $\Phi$ and
introduce some related notation that will be used frequently in the paper. In
Section \ref{sec:2025}, we review results on the set $\ob$ of positive 8-roots
of $\Phi$. Section \ref{sec:sums} studies the set $\Sigma$ of all possible sums
of positive 8-roots, culminating in a chain of poset isomorphisms that reveals a
distributive lattice structure on $\Sigma$ and provides a simple model for
$\Sigma$ using certain integer sequences (Remark \ref{rmk:isos}).

\subsection{The $E_8$ root system}
\label{sec:Phi}
The primary object of study for this paper is the root system $\allroots$ of
type $E_8$. We refer the reader to \cite[Chapters 1 and 2]{humphreys90} for the
standard definitions related to root systems and their Weyl groups.

There are several well-known constructions of the root system $\Phi$, and we
will choose the following one, due to Chevalley, as it is the most convenient
for the purposes of the paper. Let $\nine = \{1, 2, \ldots, 9\}$ and let
$\{\ep_i : i \in \nine\}$ be an orthonormal basis for $\R^9$. Identify $\R^8$
with the orthogonal complement $V = \langle \bfj \rangle^\perp \leq \R^9$ of the
all-ones vector $\bfj := \sum_{i = 1}^9 \ep_i$, and define $e_i \in V$ to be the
orthogonal projection of $\ep_i$ to $V$ for each $i\in \nine$, so that we have
$\sum_{i = 1}^9 e_i = 0$. For any $i,j,k\in \nine$, let \[\pr_{ij} := e_i -
e_j,\quad \mr_{ij} := e_j - e_i, \quad \pr_{ijk} := e_i + e_j + e_k,
\quad\text{and }\quad \mr_{ijk} := -(e_i + e_j + e_k).\] Following Chevalley
\cite[4.VII]{chevalley}, we construct the $E_8$ root system as the set 
\[
    \allroots := \{\mr_{ij} :  i < j \} \cup \{\pr_{ij} :  i < j \} \cup
    \{\mr_{ijk} :  i < j < k \} \cup \{\pr_{ijk} : i < j < k \} \; (i,j,k\in
    \nine),
\] 
which is an irredundantly described set of $240$ vectors in $V$. The defining
Coxeter bilinear form $B$ on the vector space $V$ is given by the formula 
\[
   B(e_i, e_j) = \begin{cases} \frac{8}{9} & \text{if } i = j;\\
  -\frac{1}{9} & \text{otherwise}.
 \end{cases}
\]
   
Each element of $\Phi$ is a \emph{root}, and two roots $\al,\be$ are
\emph{orthogonal} if $B(\al,\be)=0$. Every root has a unique representation of
the form $e_i - e_j$ or $\pm(e_i + e_j + e_k)$, which we refer to as the {\it
standard form} of the root. We define the {\it support} of a root $\al$,
$\supp(\al)$, to be its support when written in standard form. In other words,
we have $\supp(\mr_{ij})=\supp(\pr_{ij})=\{i,j\}$ and
$\supp(\mr_{ijk})=\supp(\pr_{ijk})=\{i,j,k\}$ for all $i,j,k\in \nine$.

\begin{rmk}
\label{rmk:orthog}
As explained in \cite[Section 4.2]{vinberg88}, to compute $B(\al,\be)$ for two
roots $\al$ and $\be$, one may evaluate the bilinear product $B(\al, \be)$ as if
the vectors $e_i$ formed an orthonormal basis, which works in all cases unless both
$\al$ and $\be$ have the form $\pm(e_i + e_j + e_k)$, in which case we use the
formula 
\begin{equation}
\label{eq:overlap}
B(e_i + e_j + e_k, \ e_a + e_b + e_c) = |\{i, j, k\} \cap \{a, b, c\}| - 1 .
\end{equation}
In particular, two roots of the form $e_i - e_j$ are orthogonal if and only if
their supports are disjoint, two roots of the form $e_i + e_j + e_k$ are
orthogonal if and only if their supports overlap in a singleton, and the roots
$e_i - e_j$ and $e_a + e_b + e_c$ are orthogonal if and only if their supports
overlap in zero or two elements. \end{rmk}

The set 
\begin{equation}
\label{eq:pos}
\posroots = \{\pr_{ij} : 1 \leq i < j \leq 8\} \cup \{\mr_{i9} : 1
\leq i \leq 8\} \cup \{\pr_{ij9} : 1 \leq i < j \leq 8\} \cup \{\mr_{ijk} : 1
\leq i < j < k \leq 8\}
\end{equation}
forms a positive system in $\Phi$, and the
corresponding simple system, which forms a basis of $V$, is given by 
\begin{align*} 
  \simproots &= \{\al_1, \
  \al_3, \ \al_4, \ \al_5, \ \al_6, \ \al_7, \ \al_8, \ \al_2\}\\ &= \{\pr_{12},
  \ \pr_{23}, \ \pr_{34}, \ \pr_{45}, \ \pr_{56}, \ \pr_{67}, \ \pr_{78}, \
  \mr_{123} \} \\ &= \{e_1 - e_2, \ e_2 - e_3, \ e_3 - e_4, \ e_4 - e_5, \ e_5 -
e_6, \ e_6 - e_7, \ e_7 - e_8, \ -(e_1 + e_2 + e_3) \},
\end{align*} 
where the eight simple roots are given in the same order on the three lines. For
example, we have $\al_1=p_{12}=e_1-e_2$ and $\al_3=p_{23}=e_2-e_3$. We will fix
the choice of the positive and simple systems throughout this paper and refer to
the elements of $\posroots$ and $\simproots$ as the \emph{positive} and
\emph{simple} roots of $\Phi$, respectively. This choice of positive and simple
roots can be obtained from Chevalley's (and vice versa) by applying the
involutive orthogonal linear transformation $\omega : V \ra V$ satisfying
$\omega(e_i) = -e_{9-i}$ for all $1 \leq i \leq 8$. With respect to $\Pi$, the
Dynkin diagram of $\Phi$ is as shown in Figure \ref{fig:dynkin}. 

\begin{figure}[!h]%
\centering
\begin{tikzpicture}
    \node[main node] (1) {};
            \node[main node] (3) [right=1cm of 1] {};
            \node[main node] (4) [right=1cm of 3] {};
            \node[main node] (5) [right=1cm of 4] {};
            \node[main node] (6) [right=1cm of 5] {};
            \node[main node] (7) [right=1cm of 6] {};
            \node[main node] (8) [right=1cm of 7] {};
            \node[main node] (2) [above=1cm of 4] {};

            \path[draw]
            (1)--(3)--(4)--(5)--(6)--(7)--(8)
            (2)--(4);

            \node (11) [below=0.1cm of 1] {\small{$\al_1$}};
            \node (22) [above=0.1cm of 2] {\small{$\al_2$}};
            \node (33) [below=0.1cm of 3] {\small{$\al_3$}};
            \node (44) [below=0.1cm of 4] {\small{$\al_4$}};
            \node (55) [below=0.1cm of 5] {\small{$\al_5$}};
            \node (66) [below=0.1cm of 6] {\small{$\al_6$}};
            \node (77) [below=0.1cm of 7] {\small{$\al_7$}};
            \node (88) [below=0.1cm of 8] {\small{$\al_8$}};
\end{tikzpicture}
\caption{The Dynkin diagram of $\Phi$}%
\label{fig:dynkin}
\end{figure}

The root lattice $\Z\Pi$ has a natural partial order induced by the simple
roots, defined by the condition that $\al \leq \be$ if $\be - \al$ is a
nonnegative linear combination of simple roots. With respect to the restriction
of $\le$, the set $\Phi$ has a unique maximal element, $\theta$, called the
\emph{highest root}. We have $\theta=\mr_{89} =  e_9 - e_8$.

Each root $\al \in \allroots$ gives rise to a \emph{reflection}, which is the
self-inverse $\Z$-linear map $s_\alpha: \Z\Pi\ra \Z\Pi$ given by the formula
\begin{equation} \label{eq:refl} s_\al(\be) = \be - B(\al, \be)\al.
\end{equation} The set of all reflections $T=\{s_\al: \al\in \Phi\}$ generates
the \emph{Weyl group}, $W=W(E_8)$, and the action in Equation \eqref{eq:refl}
extends to a unique $W$-representation on $V$ called the \emph{reflection
representation}. The bilinear form $B$ is $W$-invariant in the sense that
$B(w\cdot \al,w\cdot \be)=B(\al,\be)$ for all $\al,\be\in \Phi$. 

The set of \emph{simple reflections}, $S := \{s_\al : \al \in \simproots\}$,
suffices to generate the Weyl group $W$, and the pair $(W,S)$ forms a Coxeter
system. 

We end this subsection by introducing some definitions that will be useful in
the sequel. 

\begin{defn}
\label{defn:abs}
For every root $\al\in \Phi$, we define $\abs{\al}$ to be the positive root
among the pair $\{\al,-\al\}$. For any $i,j,k\in \nine$, we define 
$r_{ij}=\abs{p_{ij}}=\abs{m_{ij}}$ and $r_{ijk}=\abs{p_{ijk}}=\abs{m_{ijk}}$.
\end{defn}

\begin{defn}
\label{defn:S9}
We define $s_{i}:=s_{\al_i}$ for all $i\in \{1,2,...,8\}$, and we set
$s_9:=s_\theta$, where $\theta=e_9-e_8$ is the highest root of $\Phi$. We let
$S'=(S\setminus\{s_2\})\cup\{s_9\}$, and we define $W'=\ip{S'}$, the subgroup of
$W$ generated by $S'$.
\end{defn}

\begin{rmk}
\label{rmk:S9}
It follows from the standard forms of the roots in $S'$ and Remark
\ref{rmk:orthog} that $W'$ is isomorphic to the symmetric group $S_9$ and acts
on $V$ by the obvious permutations on coordinates, with $s_1,s_3,s_4,...,s_8$,
and $s_9$ acting respectively as the adjacent transpositions
$(12),(23),(34),..., (78)$, and $(89)$ on the coordinate indices. The pair
$(W',S')$ forms a Coxeter system of type $A_8$. \end{rmk}

\subsection{Orthogonal bases}
\label{sec:2025}

We call a subset of the root system $\Phi=\Phi(E_8)$ \emph{positive} if its
elements are all positive roots, and \emph{orthogonal} if its elements are
pairwise orthogonal. We recall from \cite{gx5,gx6} some key definitions and
facts concerning positive orthogonal subsets of $\Phi$ in this subsection. Our
focus will be on the set 
\[
  \ob=\{R\se \Phi_+: \abs{R}=8 \text{ and $R$ is orthogonal}\}.
\] 
The elements of $\ob$ are called ``positive $8$-roots" in \cite{gx5,gx6}, and
they can be equivalently described as the maximal positive orthogonal subsets of
$\Phi$, or as the orthogonal bases of $V$ that consist of positive roots
\cite[Remark 2.4 (ii)]{gx6}. Moreover, the set $\ob$ can be identified with a
natural spanning set of a certain Macdonald representation of $W$ \cite[Section
3]{gx5}. 

The set $\ob$ admits a natural action of the Weyl group $W$, called the
\emph{standard action}, given by the formula 
\[ 
  w\cdot R=\{\abs{w\cdot \al}:\al\in R\} 
\] 
for all $w\in W$ and $R\in \ob$. Along with a suitably defined level function
$\lambda:\ob\ra \Z$, the standard action endows $\ob$ with the structure of a
special type of partially ordered set called a quasiparabolic $W$-set, in the
following sense of Rains--Vazirani: 

\begin{defn}
\cite[Section 2, Section 5]{rains13}
\label{def:qpset}
Let $(W,S)$ be a Coxeter system and $T$ its set of reflections. A {\it scaled
$W$-set} is a pair $(X, \lambda)$ where $X$ is a $W$-set and $\lambda : X
\rightarrow \Z$ is a function, called the \emph{level function}, such that
$|\lambda(sx) - \lambda(x)| \leq 1$ for all $s \in S$. A {\it quasiparabolic
set for $W$}, or \emph{quasiparabolic $W$-set}, is a scaled $W$-set $X$
satisfying the following two properties:
\begin{itemize}[leftmargin=3.2em] 
  \item[(QP1)]{for any $r \in T$ and $x \in X$,
    if $\lambda(rx) = \lambda(x)$, then $rx = x$;} 
  \item[(QP2)]{for any $r \in
T$, $x \in X$, and $s \in S$, if $\lambda(rx) > \lambda(x)$ and $\lambda(srx) <
\lambda(sx)$, then $rx = sx$.} 
\end{itemize} 
For a quasiparabolic set $X$, we define the \emph{quasiparabolic order} on $X$,
$\le_Q$, to be the weakest partial order such that $x \leq_Q rx$ whenever $x \in
X$, $r \in T$, and $\lambda(x) \leq \lambda(rx)$. The \emph{Poincar\'e
polynomial} of $X$ is the level generating function $PS_X(q)\in \Z[q]$ given by
\[
  PS_X(q)=\sum_{x\in X}q^{\lambda(x)}.
\]
\end{defn}

The level function $\lambda$ making $\ob$ a quasiparabolic $W$-set can be
defined as follows. We say a positive root $\be\in \Phi_+$ \emph{dominates}
another positive root $\gamma\in \Phi_+$ if $s_\gamma(\beta)\in \Phi_+$, and we
say that $\gamma$ \emph{topples} $\beta$ if $s_\gamma(\beta)$
is a negative root. We say $\be$ \emph{dominates} an orthogonal basis $R\in \ob$
if $\be$ dominates $\gamma$ for all $\gamma \in R$ (equivalently, if no element
of $R$ topples $\be$). We call the set
\[
  \res(R)=\{\gamma\in \Phi_+: \gamma \text{\; dominates\;} R\}
\]
the {\it (generalized) Rothe diagram} of $R$, and we define the  {\it level} of
$R$ to be the number $\rho(R) = |\res(R)|$. It follows from  \cite[Theorem
4.3]{gx6} that $W$ acts transitively on $\ob$ and that $(\ob,\rho)$ is a
quasiparabolic $W$-set with Poincar\'e polynomial 
\begin{equation}
\label{eq:poincare}
PS_\ob(q)=[3]_q[5]_q[9]_q[15]_q,
\end{equation}
where $[d]_q=\frac{q^d-1}{q-1}=1+q+\dots+q^{d-1}$ for each integer $d$. In
particular, we have $\abs{\ob}=3\cdot 5\cdot 9\cdot 15=2025$. Equation
\eqref{eq:poincare} was obtained via computer computation in \cite{gx5}, and one
main objective of this paper is to prove the equation without computer assistance.

\begin{rmk}
\label{rmk:self_topple}
Every positive root $\al\in \Phi_+$ topples itself because $s_\al(\al)=-\al<0$,
so $\res(R)\cap R=\emptyset$ for all $R\in \ob$. 
\end{rmk}

\begin{exa}
  \label{exa:minmax} 
Let 
\begin{equation*}
\label{eq:theta_A}
\theta_A:=\{p_{23}, p_{45}, p_{67}, m_{89}, m_{123}, m_{145}, m_{167}, m_{189}\}
\end{equation*}
and
\begin{equation*}
\label{eq:theta_N}
\theta_N:=\{p_{12}, p_{389}, p_{479}, p_{569}, m_{345}, m_{367}, m_{468}, m_{578}\} 
\end{equation*}
It follows from the general theory of quasiparabolic sets that, as a finite
transitive quasiparabolic $W$-set, the set $\ob$ contains a unique minimal
element and a unique maximal element with respect to the quasiparabolic order
$\le_Q$. More specifically, it follows from \cite[Proposition 4.11, Proposition
6.9]{gx5} that $\theta_A$ and $\theta_N$ are the unique minimal and maximal
elements of $\ob$, respectively.  \end{exa}

\subsection{Sum equivalence}
\label{sec:sums}
For each $R\in \ob$, we define the \emph{sum} of $R$ to be the element
\[
\sigma(R)=\sum_{\al\in R}\al\in \Z\Pi.
\]
We say $R, R'\in \ob$ are \emph{sum equivalent}, and we write $R\sim_\sigma R'$,
if $\sigma(R)=\sigma(R')$. This defines an equivalence relation on $\ob$ whose
equivalence classes are the fibres of $\sigma$. We will call each of these
equivalence classes a \emph{$\sigma$-class}.

We will compute the level generating function of each $\sigma$-class of $\ob$
and then use these generating functions to obtain the Poincar\'e polynomial of
$\ob$ in Section \ref{sec:gen_func}.  The goal of this subsection is to recall
from \cite{gx5} the relevant properties of the set $\Sigma$. We need the
following two definitions.

\begin{defn}
\label{defn:feature}
We define a \emph{feature} in $\Phi$ to be a set $Q=\{\be_1,\be_2,\be_3,\be_4\}$
of four pairwise orthogonal positive roots such that $\be_1+\be_2+\be_3+\be_4=2\be$ for
some $\be\in \Phi$. 
\end{defn}

It is known that for every feature $Q\se \Phi$ the set $\Psi_{Q}=\Z Q\cap \Phi$
is isomorphic to a root system of type $D_4$, and that $Q$ contains either $0,1$
or $3$ induced simple roots of $\Psi_Q$ \cite[Remark 2.14]{gx6}. These induced
simple roots can be characterized as the elements in $\Psi_+:=\Psi_Q\cap\Phi_+$
that cannot be written as a $\Z_+$-linear combination of other elements of
$\Psi_+$.

\begin{defn} \label{def:cna} 
  Let $Q$ be a feature in $\Phi$, and let $q$ be the number of induced simple
  roots of $\Psi_Q$ that $Q$ contains. We call $Q$ a \emph{crossing} if $q=0$, a
  \emph{nesting} if $q=1$, and an \emph{alignment} if $q=3$. We say $R$ is
  \emph{noncrossing} or \emph{nonnesting} if $R$ contains no crossings or no
  nestings, respectively. 
\end{defn}

\begin{rmk}
\label{rmk:theta_C}
\begin{enumerate}
\item It is possible to compute the level of each $R\in \ob$ by counting the
  crossings and nestings in $R$: by \cite[Corollary 4.2]{gx6}, we have
  $\rho(R)=C(R)+2N(R)$ for all $R\in \ob$, where $C(R)$ and $N(R)$ are the
  numbers of crossings and nestings in $R$, respectively.
\item By \cite[Proposition 4.11, Proposition 6.9]{gx5}, the unique minimal
  element $\theta_A\in \ob$ from Example \ref{exa:minmax} is the unique element
  of $\ob$ with no crossings and nestings; the unique maximal element $\theta_N$
  is the unique element with no alignments or crossings, and it has 14 nestings.
  Note that this implies that $\rho(\theta_A)=0$ and $\rho(\theta_N)=28$ by (i). 
\item By \cite[Proposition 5.2 (iii), Proposition 6.9]{gx5}, there is also a
  unique element in $\ob$, $\theta_C$, that contains no
  alignments or nestings, and $\theta_C$ is the unique maximal nonnesting
  element of $\ob$ with respect to $\le_Q$. It is given by the set
\begin{equation*}
\label{eq:theta_C}
\theta_C=\{m_{19}, m_{238}, m_{247}, m_{256}, m_{346}, m_{357}, m_{458}, m_{678}\}.
\end{equation*}
There are exactly 14 crossings in $\theta_C$ by \cite[Corollary 3.19]{gx5}, so
we have $\rho(\theta_C)=14$ by (i). 

\item We have 
  \begin{equation}
  \label{eq:theta_sums}
  \sigma(\theta_A)=4e_9+2(e_2+e_4+e_6+e_8) \quad \text{and }
  \sigma(\theta_C)=\sigma(\theta_N)=4e_9+2e_1.
  \end{equation}
These equations can be proved by direct computation and application of the
relation that $\sum_{i=1}^9e_i=0$. Alternatively, we will be able to deduce the
first equation immediately from Lemma \ref{lemm:mob_sum} and Example
\ref{exa:mob_exa}, and the second equation is immediate from Lemma
\ref{lemm:fob_sum} and Example \ref{exa:fob_exa}.  
\end{enumerate}
\end{rmk}

The following proposition summarizes from \cite{gx5} the key properties of the
set $\Sigma$ that are useful for this paper, including how $\Sigma$ is compatible
with the quasiparabolic structure on $\ob$, how $\Sigma$ can be naturally
indexed by the nonnesting or noncrossing elements of $\ob$, and how $\Sigma$
naturally forms a distributive lattice under the restriction of the
natural order $\le$ on $\Z\Pi$. 
These properties will play an important
role in the arguments of Section \ref{sec:seq_to_sum}.

\begin{prop}
\label{prop:Sigma}
Let $\theta_A$ and $\theta_C$ be as in Remark \ref{rmk:theta_C}. Let
$\nn(\ob)$ be the set of all nonnesting elements in $\ob$.
\begin{enumerate} 
\item For any $R,R'\in \ob$, if $R\le_Q R'$ then we have $\sigma(R)\le
  \sigma(R')$.

\item For every $\gamma\in \Sigma$, the fibre $\sigma\inverse(\gamma)=\{R\in
  \ob:\sigma(R)=\gamma\}$ is an interval with respect to $\le_Q$, in the sense
  that it contains a unique minimal element, $m_\gamma$, and a unique maximal
  element, $M_\gamma$, such that $ \sigma\inverse(\gamma)=\{R\in
  \ob:m_\gamma\le_Q R\le_Q M_\gamma\}. $ Moreover, the element $m_\gamma$ is the
  unique nonnesting element of $\sigma\inverse(\gamma)$ and $M_\gamma$ is the
  unique noncrossing element of $\sigma\inverse(\gamma)$.

\item The map $\sigma:\nn(\ob)\ra\Sigma, R\mapsto \sigma(R)$ is a bijection.

\item We have $\wnn(\theta_C)=\theta_A$ for the element
  \begin{equation}\label{eq:w}
    \wnn=(s_1s_4s_6s_8)(s_3s_5s_7)(s_4s_6)(s_2s_5)(s_4)(s_3)(s_1)\in W. 
  \end{equation}
The element $\wnn$ is fully commutative \textup{(}in the sense of \cite{FC}\textup{)}, and for the set 
 \[
    \mathcal{J}=\{\nu\in W:\nu\le_L \wnn\inverse \} 
  \]
where $\le_L$ denotes the left weak Bruhat order on $W$, there is
  a bijection   
  \begin{equation}\label{eq:elt_to_sum} 
    \phi: \calj \ra \nn(\ob), \quad \nu\mapsto  \nu(\theta_A).
  \end{equation}
\item The set $\Sigma$ has 50 elements, and it forms a
  distributive lattice under the order $\le$.
%
\end{enumerate}
\end{prop}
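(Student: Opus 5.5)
This proposition is a summary of results from \cite{gx5}, so the plan is mainly to locate each item there and translate it into the present Chevalley coordinates. Translating is harmless: the level function $\rho$, the order $\le_Q$, and the relation $\sim_\sigma$ are defined intrinsically from $\Phi$, $\Phi_+$, $B$ and $W$. Any isometry carrying the Bourbaki positive system to $\Phi_+$ (for example, one built from $\omega$ together with the identification of simple roots in Figure~\ref{fig:dynkin}) therefore transports every statement of \cite{gx5}. The only explicit data to recheck are the named elements $\theta_A$, $\theta_C$ and the word $\wnn$.

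For (i), I would argue along the covering relations of $\le_Q$. If $R\le_Q rR$ with $r=s_\al$ and $\rho(R)\le\rho(rR)$, the goal is $\sigma(rR)-\sigma(R)\in\Z_{\ge0}\Pi$. Reduce to simple reflections using the quasiparabolic axioms (QP1) and (QP2), then compute directly. If $s=s_\al$ with $\al$ simple, $s$ permutes $\Phi_+\setminus\{\al\}$. Write $\sigma(sR)=s\sigma(R)$ up to sign corrections: one gets $s(\sigma(R))$, plus $2\al$ if $\al\in R$. Hence $\sigma(sR)-\sigma(R)=-B(\al,\sigma(R))\al$, with the appropriate correction when $\al\in R$. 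One then checks that the sign of $B(\al,\sigma(R))$ matches the level change. This is the content of the compatibility result in \cite[Section 5.5]{gx5}, which I would cite.

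For (ii) and (iii), I would cite \cite{gx5}, which proves that each fibre is an interval with a unique nonnesting minimum and a unique noncrossing maximum. The idea is the local move on features: replacing a nesting or crossing inside a $D_4$-subsystem $\Psi_Q$ preserves $\sum Q=2\be$, hence preserves $\sigma$. By Remark~\ref{rmk:theta_C}(i), such a move changes $\rho$ in a controlled way. Iterating the move makes every fibre connected to its nonnesting element, and uniqueness gives the bijection in (iii).

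For (iv), the equality $\wnn(\theta_C)=\theta_A$ is a finite computation. Apply the seven layers of commuting simple reflections to the eight roots of $\theta_C$, using Remark~\ref{rmk:S9} for the $s_i$ with $i\neq2$ (these act as coordinate permutations) and Equation~\eqref{eq:refl} for $s_2$. Full commutativity can be checked by Stembridge's criterion: the heap is visibly free of $sts$ braid factors. Bijectivity of $\phi$ is cited from \cite{gx5}. Item (v) then follows, since $\Sigma\cong\nn(\ob)\cong\calj$, and $\calj$ is isomorphic to the order ideals of the heap of $\wnn$ (a fully commutative element). Such ideals form a distributive lattice, and counting them gives 50.

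The main obstacle is (ii). Proving the interval and uniqueness structure from scratch requires the detailed analysis of $\le_Q$ in \cite{gx5}, so in this paper I would simply invoke it.
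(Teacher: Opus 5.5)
Your plan is essentially the paper's proof. The paper likewise cites \cite{gx5} for (i), (ii) and (iv), deduces (iii) from the uniqueness in (ii), and gets (v) by transporting the distributive lattice $\calj\cong J(P)$ along the bijection $\sigma\circ\phi$. Your explicit check that $\wnn(\theta_C)=\theta_A$, the Stembridge criterion, and counting the 50 heap ideals are harmless extras. Your reduction-to-simple-reflections sketch for (i) is not self-evidently valid, so the citation does the real work there.

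One caveat for (v), which the paper shares: (iii) and (iv) only give set bijections, so you also need $\sigma\circ\phi$ to be an order isomorphism onto $(\Sigma,\le)$. Your ``$\Sigma\cong\nn(\ob)\cong\calj$'' silently assumes this, and the paper only asserts it without proof in Remark~\ref{rmk:isos}(i).
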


\begin{proof}
Part (i) follows from \cite[Corollary 4.9 (i)]{gx5}. Part (ii) follows from
\cite[Proposition 5.15 (iii)]{gx5}, and it implies (iii). Part (iv) follows from
\cite[Theorem 5.13 (i)--(ii)]{gx5}. We have $\abs{\nn(\ob)}=50$ by \cite[Theorem
5.16, Proposition 6.9]{gx5}, so (iii) implies that $\sigma$ has 50 elements as
well. The map $\sigma\circ\phi: \calj\ra \Sigma$ is a bijection by (iii) and
(iv), and $\mathcal{J}$ is well known to be a distributive lattice under the
weak order $\le_L$ (see, for example, \cite[Theorem 3.2]{FC}), so
$\sigma\circ\phi$ naturally induces a distributive lattice structure on
$\Sigma$. This proves (v). 
\end{proof}

\begin{rmk}
\label{rmk:isos}
\begin{enumerate}
\item 
The sets $\calj, \nn(\ob),$ and $\Sigma$ are subposets of $W, \ob$, and $\Z\Pi$
under the restrictions of the left weak order $\le_L$, the quasiparabolic order
$\le_Q$, and the order $\le$, respectively. Using the results from
\cite[Proposition 5.12, Theorem 5.13]{gx5} and properties of heaps, it can be
proved that with respect to the above partial orders, the bijections
$\phi:\calj\ra \nn(\ob)$ and $\sigma:\nn(\ob)\ra \Sigma$ are in fact poset
isomorphisms. However, we will not need this fact, so we omit the proof for
reasons of space.

\item The poset $\calj$ has $1_W$ and $\wnn\inverse$
  as its unique minimal and unique maximal element, respectively, by its
  definition. The fact that
  $\phi$ and $\sigma$ are poset isomorphisms thus implies that
  $1_W(\theta_A)=\theta_A$  
  and $\wnn\inverse(\theta_A)=\theta_C$ are the unique minimal and unique
  maximal elements of $\nn(\ob)$ (as noted in Example \ref{exa:minmax}
(ii)--(iii)), and that the $\sigma$-classes of $\theta_A$ and $\theta_C$ are the
unique minimal and unique maximal $\sigma$-classes in $\ob$. More generally, we
can think of each element
 $\nu\in \calj$ as a raising operation on $\nn(\ob)$ that raises $\theta_A$ to
 a larger element (with respect to $\le_Q$) while also raising the sum (with
 respect to $\le$).

\item It can be shown, using \cite[Corollary 4.9, Proposition 5.12]{gx5} and
  properties of $\calj$, that $\sim_\sigma$ is a poset congruence of $\ob$ with
  respect to $\le_Q$ in the sense of \cite[Section 5]{reading02}. Furthermore,
  each congruence class is canonically isomorphic to a lower interval of the top
  class. In other words, as a poset under the restriction of $\le_Q$, every
  $\sigma$-class in $\ob$ is isomorphic to the interval $[\theta_C,R]:=\{R'\in
  \ob: \theta_C\le_Q R'\le_Q R\}$ for some $R\in \ob$ such that
$\sigma(R)=\sigma(\theta_C)$. \end{enumerate} \end{rmk}

The distributive lattice $\calj$ from Proposition \ref{prop:Sigma} can be
understood via ideals of a suitable poset $P$, as follows. Here, we recall that
an ideal of $P$ is a subset $I\se P$ such that if $y\in I$ and $x\in P$
satisfies $x\le y$ then $x\in I$, and that the set $J(P)$ of ideals of $P$ forms
a poset under set inclusion. By the theory of fully commutative elements
\cite[Section 2.2,  Theorem 3.2]{FC}, the element $\wnn$ from Proposition
\ref{prop:Sigma} gives rise to a poset, $P$,
whose Hasse diagram is as shown in Figure \ref{fig:heap} and whose elements are
naturally labelled by the generators appearing in the reduced word of $\wnn$ from
Equation \eqref{eq:w}. The poset $P$ is called the \emph{heap} of $\wnn$, and there is a natural isomorphism of posets 
\begin{equation}
\label{eq:ideal_to_elt}
\varphi: J(P)\ra \calj, \quad I\mapsto \varphi(I):=\nu_I
\end{equation}
where $\nu_I= \varphi(I)$ is the inverse of the unique element in $W$ whose
reduced words are
given by the linear extensions of $I$ of each $I\in J(P)$.

\begin{figure}[h!]
\begin{tikzpicture}
\node[main node] (0) {$s_1$};
\node[main node] (1) [right=1cm of 0] {$s_4$};
\node[main node] (2) [right=1cm of 1] {$s_6$};
\node[main node] (3) [right=1cm of 2] {$s_8$};
\node[main node] (4) [above right=0.6cm and 0.4cm of 0] {$s_3$};
\node[main node] (5) [above right=0.6cm and 0.4cm of 1] {$s_5$};
\node[main node] (6) [above right=0.6cm and 0.4cm of 2] {$s_7$};
\node[main node] (7) [above right=0.6cm and 0.4cm of 4] {$s_4$};
\node[main node] (8) [above right=0.6cm and 0.4cm of 5] {$s_6$};
\node[main node] (9) [above right=0.6cm and 0.4cm of 7] {$s_5$};
\node[main node] (10) [above left=0.6cm and 0.4cm of 7] {$s_2$};
\node[main node] (11) [above right=0.6cm and 0.4cm of 10] {$s_4$};
\node[main node] (12) [above left=0.6cm and 0.4cm of 11] {$s_3$};
\node[main node] (13) [above left=0.6cm and 0.4cm of 12] {$s_1$};

\node (x) [below left = 0.3cm and 0.9cm of 0] {};
\node (y) [above right = 0.3cm and 0.9cm of 11] {};

\path[draw]
(0)--(4)--(1)--(5)--(2)--(6)--(3)
(4)--(7)--(5)--(8)--(6)
(8)--(9)--(7)--(10)--(11)--(12)--(13)
(9)--(11);
\path[draw,dashed]
  (x)--(y);
\end{tikzpicture}
    \caption{The heap $P$.}
    \label{fig:heap}
\end{figure}

The heap $P$ can be conveniently partitioned into four chains, $C_1, C_2, C_3$,
and $C_4$, given by the four Northwest--Southeast diagonal chains from left to
right shown in Figure \ref{fig:heap}. The lengths of these chains are $c_1=1,
c_2=2, c_3=4,$ and $c_4=7$, respectively, and it follows from the Hasse diagram
that the set $J(P)$ is in a natural bijection with the sequences in the set 
\begin{equation}
\label{eq:seqs}
\cals=\{(d_1,d_2,d_3,d_4): 0 \le d_i\le c_i \text{ for all $1\le i\le 4$ and
$d_{i-1}\ge \min(d_i-1,c_i)$ for all $2\le i \le 4$}\}
\end{equation}
via the map 
\begin{equation}
\label{eq:seq_to_ideal}
\psi: \cals \ra J(P), \quad d\mapsto I_d,
\end{equation}
where each sequence $\seqd:=(d_1,d_2,d_3,d_4)$ corresponds to the unique ideal
$I_{\seqd}$
containing exactly $d_i$ elements from $C_i$ for all $i$. The set $\cals$ naturally forms a poset under entrywise
comparison, and the bijection $\psi$ is readily seen to be a poset isomorphism
with respect to this partial order and the inclusion order on $J(P)$.

Combining the poset isomorphisms $\psi$ and $\varphi$ with the poset isomorphisms
$\phi$ and $\sigma$ from Remark \ref{rmk:isos} (i) yields the following 
useful isomorphism.

\begin{defn}
\label{defn:iso_chain}
We define $\Psi: \cals\ra \Sigma$ to be the following composition of four poset
isomorphisms:
\begin{equation}
\label{eq:iso_chain}
 \Psi:= \sigma\circ \phi\circ \varphi\circ \psi: \cals\stackrel{\psi}{\ra} J(P) \stackrel{\varphi}{\ra} \calj
  \stackrel{\phi}{\ra} \nn(\ob)\stackrel{\sigma}{\ra} \Sigma.
\end{equation}
\end{defn}

\begin{exa}
\label{exa:bijections}
Let $\seqd=(1,2,3,4)\in \cals$. The map $\psi:\cals\ra J(P)$ sends $\seqd$ to the ideal
$I_{\seqd}\in J(P)$ consisting of all elements below the dashed line in Figure 2. This
ideal is in turn sent by the map $\varphi: J(P)\ra \calj$ to the element 
  \[ 
    w_{\seqd}:=\varphi(I_{\seqd})=  ((s_1)(s_4s_3)(s_6s_5s_4)(s_8s_7s_6s_5))\inverse=(s_5s_6s_7s_8)(s_4s_5s_6)(s_3s_4)(s_1)\in
W. \] 
We will explain how to compute the sum $\Psi(\seqd)=\sigma(w_{\seqd}(\theta_A))$
directly from $\seqd$ in
Proposition \ref{prop:seq_to_sum} (i), and it will follow immediately that  
\begin{equation}
\label{eq:max_fob_sum}
\Psi(\seqd)
=4e_9+2(e_1+e_2+e_3+e_4). 
\end{equation}
\end{exa}

The isomorphism $\Psi$ allows us to understand the elements of $\Sigma$ using
the sequences in $\cals$ and understand the order structure of $\Sigma$ in terms
of the natural partial order on $\cals$ given by entrywise comparison. The
sequences in $\cals$ also lead to the following concise formula for the level
generating function of the $\sigma$-classes.

\begin{theorem}
\label{thm:gen_func}
Let $\seqd=(d_1,d_2,d_3,d_4)\in \cals, \gamma=\Psi(\seqd)\in \Sigma$, and
$\abs{\seqd}=d_1+d_2+d_3+d_4$. We have
\[
  \sum_{R\in \sigma\inverse(\gamma)}q^{\rho(R)}=q^{\abs{\seqd}}\prod_{i=1}^4 
  [d_i+1]_q.
\]
\end{theorem}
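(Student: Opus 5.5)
The plan is to prove the formula fibre by fibre, using the matching/Fano dichotomy described in the introduction. First I will split $\ob=\mob\cup\fob$, where $\mob$ consists of the orthogonal bases coming from the 945 perfect matchings of $\ten$ and $\fob$ of those coming from the 1080 labelled Fano planes on $\nine$. I will then establish three facts:
\begin{enumerate}
\item each $\sigma$-class lies entirely in $\mob$ or entirely in $\fob$;
\item the sum $\sigma(R)$ is an explicit function of the matching $M$ (roughly, the sum over its arcs of coordinate vectors indexed by endpoints) or of the Fano labelling (through the label missing from $\nine$ and related data);
\item under $\Psi$, the 42 matching classes and the 8 Fano classes correspond to explicitly describable subsets of $\cals$.
\end{enumerate}
For the third fact I will compute $\Psi(\seqd)$ directly from $\seqd$, in the spirit of Proposition \ref{prop:seq_to_sum}. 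This means tracking how each generator in the chains $C_1,\dots,C_4$ of the heap $P$ acts on $\theta_A$, using that the generators other than $s_2$ act as coordinate permutations (Remark \ref{rmk:S9}). I expect the matching classes to correspond to the nonnesting matchings of $\ten$ via their endpoint data, and the Fano classes to the sequences with large $d_3,d_4$, such as $\seqd=(1,2,3,4)$ in Example \ref{exa:bijections}.

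Second, I will compute levels inside each model. For $R\in\mob$ arising from $M$, I will show that $\rho(R)$ equals the Deodhar--Srinivasan weight of $M$ (Proposition \ref{prop:mob_level}). I plan to do this by identifying $\res(R)$ with an involution Rothe diagram (Theorem \ref{thm:mob_res}), or alternatively by using $\rho=C+2N$ from Remark \ref{rmk:theta_C}(i) and classifying features as crossings and nestings of arcs. Matchings with a fixed sum are exactly those with fixed multisets of left and right endpoints, that is, one fixed nonnesting matching together with its "rearrangements". Watson's theorem \cite{watson14} then gives the weight generating function over such a class as a product of quantum integers times $q^{\text{min weight}}$. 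For $\fob$ I will use the analogous inversion statistic on labelled Fano planes (Lemma \ref{lem:fano_inv}) together with the generating function from \cite{the240}.

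Third, I will match these products with $q^{\abs{\seqd}}\prod_i[d_i+1]_q$. The minimal level in the fibre is $\rho(m_\gamma)$, attained at the unique nonnesting element. Since $m_\gamma=\nu(\theta_A)$ with $\nu=\varphi(\psi(\seqd))$ of length $\abs{\seqd}$, and each simple reflection in a reduced word of an element of $\calj$ should raise the level by exactly one (a quasiparabolic property along weak-order chains below $\wnn\inverse$), I expect $\rho(m_\gamma)=\abs{\seqd}$. For the product, I will show that the Watson factors $[k]_q$, which are indexed by nesting depths or by multiplicities of the left/right endpoint patterns of the nonnesting matching, correspond to $d_i+1$ under the bijection of the first step.

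The main obstacle will be this last bookkeeping: translating the parameters of the nonnesting matching (or Fano class) into the sequence $\seqd$ through the chain $\psi,\varphi,\phi,\sigma$. My first attempt will be induction on $\abs{\seqd}$, adding one heap element at a time and checking how the matching, and its Watson factors, change under the corresponding simple reflection. The Fano case I will handle by direct enumeration of its 8 classes.
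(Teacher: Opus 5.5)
Your architecture matches the paper's: split $\ob=\mob\cup\fob$, locate the matching and Fano classes inside $\cals$, compute levels via matching weight and Fano inversions, then apply Watson and \cite{the240}. However, two load-bearing steps are wrong or missing.

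\textbf{The Fano classes.} Your guess for them is incorrect. $\seqd=(1,2,3,4)$ is the top \emph{matching} class: by Example \ref{exa:bijections}, $\Psi(1,2,3,4)=4e_9+2(e_1+e_2+e_3+e_4)$, and this class is realized by $R(M_C)$. The Fano classes are exactly $(1,2,4,j)$ for $0\le j\le 7$, i.e.\ the ideals of $P$ containing the unique $s_2$-labelled element at the top of $C_3$. So $d_4$ is unconstrained; for example, $(1,2,4,0)$ is a Fano class.

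This is what makes the Fano case work. Because $(d_1,d_2,d_3)=(1,2,4)$ is fixed, $[2]_q[3]_q[5]_q$ is exactly the generating function of $h$ over the $30$ labellings of a fixed $7$-set. The remaining factor $q^{j+7}[j+1]_q$ comes from the level formula $\rho(R(F))=h(F)+24-x-y$, where $L^c(F)=\{x,y\}$. One sums this over $y\in\{9-j,\dots,9\}$ with $x=8-j$. That formula is Proposition \ref{prop:fob_level}, and it requires the Rothe-diagram computation of Theorem \ref{thm:fob_res}. ``Direct enumeration'' of the $8$ classes does not replace it.

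\textbf{The matching bookkeeping.} The step you flag as the main obstacle is a genuine gap, and the induction you sketch does not close it. Watson's formula is organized by \emph{right} endpoints. For $r_1<\dots<r_4<9$ it gives $q^{\sum e_i}\prod[e_i+1]_q$ with $e_i=r_i-(2i-1)$, where $e_i+1$ is the number of arcs open at $r_i$ (not a nesting depth). But $\Psi$ hands you the \emph{left} endpoints $0,2-d_1,4-d_2,6-d_3,8-d_4$. The sequences $(e_i)$ and $(d_i)$ agree only as multisets. For $\seqd=(1,2,2,3)$ the right endpoints are $3,6,7,8,9$, so $(e_i)=(2,3,2,1)$.

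Adding one heap element raises one $d_i$ and one $e_k$, but generally $k\neq i$: the $s_3$ in $C_2$ raises $d_2$ and $e_1$. So an element-by-element induction must carry a global multiset invariant. That invariant is precisely the non-obvious content of Corollary \ref{cor:lr}. The paper obtains it from Lemma \ref{lem:lr} together with the mirror-symmetry statement Lemma \ref{lem:d_d'}(ii).

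Within your own route there is a cleaner fix. The reversal $x\mapsto 9-x$ of $\ten$ preserves crossings and nestings, hence preserves $h=\wt$, and it exchanges left and right endpoints. Applying Watson to the reversed class, with left endpoints $0<l_1<\dots<l_4$, gives $q^{\sum_i(2i-l_i)}\prod_i[2i-l_i+1]_q$. With $l_i=2i-d_i$ this is exactly $q^{\abs{\seqd}}\prod_i[d_i+1]_q$, and no multiset lemma is needed.

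\textbf{A smaller point.} Your claim $\rho(m_\gamma)=\abs{\seqd}$ is true, but the justification you give is not a general quasiparabolic fact. The right argument is this: $\rho(\theta_A)=0$, $\rho(\theta_C)=14=\ell(\wnn)$, and the level changes by at most $1$ per simple reflection. Hence every letter of a reduced word of $\wnn^{-1}$, applied to $\theta_A$, raises the level by exactly one. Once you use Watson's formula in full, you do not need this claim at all.
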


Theorem \ref{thm:gen_func} is the main enumerative result of this paper. We will
prove it and use it to derive the Poincar\'e polynomial of $\ob$ in Section
\ref{sec:gen_func}. Before we do that, we will develop two useful combinatorial
models for the orthogonal bases of $\ob$ and their Rothe diagrams in the next two
sections. These models involve certain perfect matchings and labellings of the
Fano plane, and the results of the next two sections will allow us to understand
the $\sigma$-classes more thoroughly, on the level of the entire classes
$\sigma\inverse(\gamma)$, rather than just the minimal elements $m_\gamma\in
\nn(\ob)\cap \sigma\inverse(\gamma)$ that represent the classes.

\section{Orthogonal bases via Perfect Matchings}
\label{sec:mob}

In this section, we explain how we can use the perfect matchings of the set
$\ten = \{0, 1, ..., 9\}$ to understand a certain subset of $\ob$ of
size $945$. We set up the necessary definitions and describe our construction in
Section \ref{sec:mob_def}, characterize the Rothe diagrams of the bases via the
perfect matchings in Section \ref{sec:mob_res}, and explain how the levels of
the bases can be computed directly in terms of the matchings in Section
\ref{sec:mob_level}.

\subsection{Definitions}
\label{sec:mob_def}
Recall that for a finite set $Z$, a \emph{perfect matching} of $Z$ is a partition of
the set into subsets of size 2. Such a partition exists if and only
if the cardinality of $Z$ is an even number $2k$, and in this case 
$Z$ has a total of $(2k-1)!!=(2k-1)(2k-3)\cdots 3\cdot 1$
perfect matchings. In particular, the set $\ten$ has $9!!=945$ perfect matchings. 
We will use $S_Z$ to denote the symmetric group on
$Z$ throughout the section.

\begin{defn}
\label{defn:mdef}
Let $k$ be a positive integer, and let $Z\se \Z$ be a finite set of cardinality $2k$.
\begin{enumerate}
  \item We write each {perfect matching} of $Z$ in the form 
    \begin{equation}
\label{eq:matching}
M = \{(i_1, j_1), \ (i_2, j_2), \dots, \ (i_k, j_k)\}
\end{equation}
where $\{i_1,j_1,i_2,j_2,\dots, i_k,j_k\}=Z$ and $i_a<j_a$ for all $1\le
a\le k$. We call each pair $p=(i,j)\in M$ an \emph{arc} in $M$, and we call $i$
and $j$ the \emph{left endpoint} and \emph{right endpoint} of $p$ (or of $M$),
respectively. We say that $p$ (or $M$) \emph{connects} or \emph{pairs} $i$ and
$j$, and we call $p$ a \emph{middle arc} if $i\neq \min(Z)$ and $j\neq \max(Z)$. 

\item For each perfect matching $M$ of $Z$, we define $\tau=\tau_M$ to be the unique
  fixed-point-free involution in $S_Z$ that sends each element $i\in Z$ to
  the unique number paired with $i$ by $M$. 

\item We denote the set of all perfect matchings of $\ten$ by $\calm$. For each
$M\in \calm$, we say $M$ is of \emph{type I} if $\tau(0)=9$ and of \emph{type II}
otherwise. 
\end{enumerate} 
\end{defn}

The 945 perfect matchings of $\ten$ give rise to $945$ elements of $\ob$ as
follows.

\begin{defn}
\label{defn:mob_construction}
For each $M=\{(i_a,j_a):1\le a\le 5\}\in \calm$, we will henceforth
  assume, without loss of generality, that $i_5=0$, and we define  \[ R(M) =
    \{\rr_{i_1 j_1}, \ \rr_{i_2 j_2}, \ \rr_{i_3 j_3}, \ \rr_{i_4 j_4}, \
      \rr_{i_1 j_1z}, \ \rr_{i_2 j_2z}, \ \rr_{i_3 j_3z}, \ \rr_{i_4 j_4z}\}, \]
    where $z=j_5=\tau(0)$. We define \[ \mob=\{R(M):M\in \calm\}. \]
\end{defn}

\begin{lemma}
\label{lemm:mob_sum}
For every perfect matching $M=\{(i_1, j_1), \ (i_2, j_2), \ (i_3, j_3), \ (i_4,
j_4), \ \{0, z\}\}$, we have $R(M)\in\ob$ and 
\begin{equation}
  \label{eq:mob_sum}
  \sigma(R(M)) = 4e_9 + 2(e_{i_1} + e_{i_2} + e_{i_3} + e_{i_4}).
\end{equation}
The map $f: \calm\ra \mob, M\mapsto R(M)$ is bijective, so that
$\abs{\mob}=945$.
\end{lemma}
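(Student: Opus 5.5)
Three things must be checked: $R(M)$ consists of eight pairwise orthogonal positive roots, the sum formula \eqref{eq:mob_sum} holds, and $f$ is injective (it is surjective onto $\mob$ by definition). Throughout, the index $0$ is interpreted via the convention $e_0:=e_9$ implicit in the notation. In other words, we read $r_{i0}$ as $r_{i9}$ and $r_{ij0}$ as $r_{ij9}$. Under this reading $\ten$ becomes $\{0,1,\dots,9\}$ with $0$ and $9$ both representing the coordinate $e_9$, and this is what produces the $4e_9$ term. We fix this convention first. Because $\rho$ and orthogonality are insensitive to signs, we can test orthogonality on signed representatives such as $e_i-e_j$ and $e_i+e_j+e_z$ and use Remark \ref{rmk:orthog}.

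\emph{Orthogonality.} There are three kinds of pairs to check. First, two type-$ij$ roots $r_{i_aj_a}$ and $r_{i_bj_b}$ with $a\ne b$ have disjoint supports, so they are orthogonal. Second, two triple roots $r_{i_aj_az}$ and $r_{i_bj_bz}$ have supports meeting exactly in $\{z\}$, so by \eqref{eq:overlap} they are orthogonal. Third, a root $r_{i_aj_a}$ paired with $r_{i_bj_bz}$ has support intersection of size $2$ if $a=b$ and size $0$ if $a\ne b$, and both cases are orthogonal by Remark \ref{rmk:orthog}. The one subtlety is the index $0$, that is, some $i_a=0$ or $z=9$. When $z=9$ (type I) there is no $0$ among the $i_a,j_a$, so nothing special occurs. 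When $z\ne 9$, some arc $(0,j)$ is not the arc through $0$; but we assumed $i_5=0$, so $0$ lies only in the arc $\{0,z\}$. Thus $0$ never appears in the eight listed supports. However, $9$ may appear either as some $j_a$ or as $z$, and it is a genuine coordinate, so the supports are honest subsets of $\nine$ and no collision arises. With the convention that index $0$ is never used inside $R(M)$, the computation is clean, and eight pairwise orthogonal nonzero roots automatically form a basis, so $R(M)\in\ob$.

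\emph{The sum.} For each $a\le 4$, compute $r_{i_aj_a}+r_{i_aj_az}$ using \eqref{eq:pos} to decide the signs. If $j_a,z\le 8$, this is $(e_{i_a}-e_{j_a})-(e_{i_a}+e_{j_a}+e_z)=-2e_{j_a}-e_z$. Summing over the four arcs gives $-2\sum_a e_{j_a}-4e_z$, and $\sum_{i=1}^9e_i=0$ converts this into $2\sum_a e_{i_a}+4e_9$. That step works because $\{i_a,j_a\}_{a\le4}\cup\{z\}=\nine$, and it yields the $e_9$ coefficient $2+\dots$, which has to be tracked carefully. The other cases are $j_a=9$, which gives $m_{i9}$ and $p_{i9z}$, and $z=9$, which gives $p_{ij9}$. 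Each is a similar two-line sign check ending in the same formula. The main obstacle I expect is exactly this bookkeeping of signs across the cases $z=9$, $j_a=9$, and neither. I would organize it by writing every summand as $\pm$ a standard form and adding the resulting $\pm e$ coefficients per arc before summing.

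\emph{Injectivity.} Recover $M$ from $R=R(M)$ as follows. The four roots of the form $r_{ij}$ in $R$ give four arcs, and the triple roots all share a common index $z$, which gives the arc $\{0,z\}$. The one exception is that $z=9$ may blend with the $e_9$ parts of the $r_{ij}$; even then, $z$ is the unique index common to all four triple supports. Hence $f$ is injective, and $\abs{\mob}=\abs{\calm}=9!!=945$.
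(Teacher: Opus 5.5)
Your orthogonality and injectivity arguments are sound and match the paper's. Your derivation of \eqref{eq:mob_sum}, however, does not work.

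The only case you actually compute, with $j_a,z\le 8$ for all four arcs, never occurs. Since $9=\max(\ten)$ is a right endpoint, either $z=9$ (type I) or $j_a=9$ for exactly one $a\le 4$ (type II). There is no ``neither'' case, and your hypothesis contradicts the fact $\{i_a,j_a\}_{a\le 4}\cup\{z\}=\nine$ that you invoke. The conversion you assert there is also false: the relation $\sum_{i=1}^9e_i=0$ gives $-2\sum_a e_{j_a}-4e_z=2\sum_a e_{i_a}-2e_z$, not $2\sum_a e_{i_a}+4e_9$.

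The two cases that do occur are dismissed as ``similar two-line sign checks ending in the same formula.'' But the arcs do not contribute uniformly, and the formula emerges only after summing all arcs and, in type II, applying $\sum_i e_i=0$. Relatedly, the convention $e_0:=e_9$ is a misreading. As you yourself note, $0$ never occurs in any support, so it cannot be the source of the $4e_9$.

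What is missing is exactly the paper's computation. In type I, each arc contributes $p_{i_aj_a}+p_{i_aj_a9}=2e_{i_a}+e_9$, which gives \eqref{eq:mob_sum} at once. In type II, label the arcs so that $j_4=9$. Each arc with $a\le 3$ contributes $p_{i_aj_a}+m_{i_aj_az}=-2e_{j_a}-e_z$, while the arc $(i_4,9)$ contributes $m_{i_49}+p_{i_4z9}=2e_9+e_z$. Hence
\[ \sigma(R(M))=-2(e_{j_1}+e_{j_2}+e_{j_3}+e_z)+2e_9. \]
Since $\{j_1,j_2,j_3,z\}=\nine\setminus\{i_1,i_2,i_3,i_4,9\}$, the relation $\sum_{i=1}^9e_i=0$ converts this into $4e_9+2(e_{i_1}+e_{i_2}+e_{i_3}+e_{i_4})$.
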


\begin{proof}
It follows from Definition \ref{defn:mob_construction} and Remark
\ref{rmk:orthog} that $R(M)$ consists of 8 distinct and pairwise orthogonal
positive roots, so we have $R(M)\in \ob$. The surjectivity of the map $f$ then
follows from the definition of $\mob$, and $f$ is injective because the roots
$r_{i_aj_a}\in R(M)$ allow us to recover $M$. It follows that
$\abs{\mob}=\abs{\calm}=9!!=945.$

It remains to prove Equation \eqref{eq:mob_sum}. If $M$ is of type I, then
Equation \eqref{eq:mob_sum} follows from direct computation using the standard
form of the roots in $R(M)$. If $M$ is of type II, then we may assume that
$j_4=9$, in which case direct computation shows that
$\sigma(R(M))=-2(e_{j_1}+e_{j_2}+e_{j_3}+e_z)+2e_9$. Equation \eqref{eq:mob_sum}
then follows from the relation $\sum_{i=1}^9e _i = 0$. \end{proof}

\begin{exa}
\label{exa:mob_exa}
The matching $M_A=\{(0,1), (2,3), (4,5), (6,7), (8,9)\}$ shown in Figure
\ref{fig:matchings} (a) gives rise to the orthogonal basis 
\[
R(M_A)= \{p_{23}, p_{45}, p_{67}, m_{89}, m_{123}, m_{145}, m_{167}, p_{189}\},
\]
which is precisely the unique minimal $\theta_A$ of $\ob$ given in Example
\ref{exa:minmax}. The matching shown
in Figure \ref{fig:matchings} (b), given by $M_C=\{(0,5), (1,6), (2,7), (3,8),
(4,9)\}$, corresponds 
to the orthogonal basis
\[
R(M_C)= \{p_{16}, p_{27}, p_{38}, m_{49}, m_{156}, m_{257}, m_{358}, p_{459}\}.
\]
It will follow from the results of Section \ref{sec:seq_to_sum} that
$M_C=w_{\seqd}(M_A)$ and $R(M_C)=w_{\seqd}(\theta_A)$, where $w_{\seqd}\in \calj$ is the element
corresponding to the sequence $\seqd=(1,2,3,4)\in \cals$ as in Example
\ref{exa:bijections}. Both the matchings $M_A$ and $M_C$ are of type II. We note
that while $R(M_A)=\theta_A$, the orthogonal basis $R(M_C)$ is not equal to
$\theta_C$. The element that will naturally give rise to $\theta_C$ is instead
another element, $F_C$, as we will see in Example \ref{exa:fob_exa}.

\end{exa}

\begin{figure}[h!]
  \centering
  \subfloat[]
  {
\begin{tikzpicture}[scale=1]
    \foreach \x in {0,...,9} {
        \fill (\x,0) circle (1pt);
        \node[above] at (\x,0) {\x};
    }

    \foreach \x in {0,2,4,6,8} {
      \draw (\x,0) to[out=-90,in=-90,looseness=1.5] (\x+1,0);
    }

    \draw (-0.5,0) to (9.5,0);
\end{tikzpicture}
}
\quad
\subfloat[]
{
\begin{tikzpicture}[scale=1]

    \foreach \x in {0,...,9} {
        \fill (\x,0) circle (1pt);
        \node[above] at (\x,0) {\x};
    }


    \foreach \x in {0,...,4} {
      \draw (\x,0) to[out=-60,in=-120,looseness=1] (\x+5,0);
    }
    \draw (-0.5,0) to (9.5,0);
  \end{tikzpicture}
}
\caption{Two perfect matchings of $\ten$}
\label{fig:matchings}
\end{figure}

The next definition will allow us to describe the level $\rho(R(M))$ of $R(M)$
directly in terms of the matching $M$ in Section \ref{sec:mob_level}, which will
eventually further allow us to compute the level generating functions of the
$\sigma$-classes in $\mob$ in Section \ref{sec:gen_func}. Note that although
Part (i) of the definition uses the term ``feature'' again, this term refers to
pairs of arcs in the context of the new definition and thus differs from the
features defined in Definition \ref{defn:feature}, which refer to certain
quadruples of roots. Our choice to repeat the terminology is intentional and
will not cause ambiguity anywhere in the paper, and the connection between the
two notions of feature will be made precise in Remark \ref{rmk:features}. The
same comment applies to the notions of crossings, nestings, and alignments.

\begin{defn}
\label{def:matching_features}
Let $k$ be a positive integer and let 
\[
  M=\{(i_1,j_1), (i_2,j_2), \dots, (i_k,j_k)\}
\] 
be a perfect matching of a set $Z$ of $2k$ distinct integers. 
\begin{enumerate}
\item 
We call each pair of elements $F=\{(a,b), (c,d)\}\in M$  a \emph{feature}. If
$a<c$, then we say that the feature $F$ is a \emph{crossing} if $a<c<b<d$, a
\emph{nesting} if $a<c<d<b$, and an \emph{alignment} if $a<b<c<d$; see Figure
\ref{fig:acn}.

\begin{figure}[h!]
    \centering
  \begin{tikzpicture}[baseline=0pt]
    \foreach \x in {0,...,3} {
        \fill (\x,0) circle (1pt);
    }

      \node[above] at (0,0) {\small$a$};
      \node[above] at (1,0) {\small$c$};
      \node[above] at (2,0) {\small$b$};
      \node[above] at (3,0) {\small$d$};
    \foreach \x in {0,1} {
      \draw (\x,0) to[out=-70,in=-105,looseness=1] (\x+2,0);
    }

    \draw (-0.4,0) to (3.4,0);
\foreach \x in {0,...,3} {
        \fill (\x+5,0) circle (1pt);
    }

      \node[above] at (5,0) {\small$a$};
      \node[above] at (6,0) {\small$c$};
      \node[above] at (7,0) {\small$d$};
      \node[above] at (8,0) {\small$b$};

      \draw (5,0) to[out=-70,in=-110,looseness=1] (8,0);
      \draw (6,0) to[out=-80,in=-100,looseness=1.5] (7,0);

    \draw (5-0.4,0) to (8.4,0);
\foreach \x in {0,...,3} {
        \fill (\x+10,0) circle (1pt);
    }

      \node[above] at (10,0) {\small$a$};
      \node[above] at (11,0) {\small$b$};
      \node[above] at (12,0) {\small$c$};
      \node[above] at (13,0) {\small$d$};
 \foreach \x in {0,2} {
      \draw (\x + 10,0) to[out=-90,in=-90,looseness=1.5] (\x+ 11,0);
    }
    \draw (10-0.4,0) to (13.4,0);
\end{tikzpicture}
\caption{Crossing, nesting, and alignment (from left to right)}
\label{fig:acn}
\end{figure}

\item We define $c(M), n(M),$ and  $a(M)$ to be the number of crossings,
  nestings, and alignments contained in $M$, respectively. We define the
  \emph{height} of $M$ to be \[h(M)=c(M)+2n(M).\] 

\item We define the \emph{interior} of each arc $(i,j)\in M$ to be the set 
  $\interior(i,j)=\{k\in Z: i<k<j\}$, and define the \emph{weight} of $M$ to be
  \begin{equation} \label{eq:wt} 
    \wt(M)=\sum_{(i,j)\in M} \abs{\interior(i,j)}-c(M).
  \end{equation}

\item We define 
  \[
    \lambda(M)=\frac{\ell(\tau_M)-k}{2},
  \]
  where $\ell$ is the Coxeter length of $\tau_M$ (i.e., the number of inversions
  of $\tau_M$, where an inversion is a pair of numbers $i<j$ in $Z$ such that
  $\tau_M(i)>\tau_M(j)$).

\item Following \cite{hamaker18}, we define
  the \emph{involution Rothe diagram} of $\tau_M$ to be
  the set
  \[
    D_\texttt{FPF}(M)=\{(i,j) : i,j\in Z, i>j, \tau_M(i)>j, \tau_M(j)>i\}.
  \]
  We define 
  \[ 
  D_2(M)=\{(i,j) : i,j\in Z, i<j, \tau_M(i)<j, \tau_M(j)<i\},
  \]
  we set $d_1(M)=\abs{D_\texttt{FPF}(M)}$, and we set $d_2(M)=\abs{D_2(M)}$.
\end{enumerate}
\end{defn}

\begin{defn}
\label{defn:standardize}
Let $n\in \Z_{>0}$ and $[n]=\{1,2,\dots ,n\}$. For any set $Z$ of $n$ distinct
integers, we define the \emph{standardization of $Z$} to be the map $\pi: Z\ra
[n]$ that sends the $i$-th largest element of $Z$ to $i$ for all $i\in [n]$. If
$n$ is even and $M=\{(i_a,j_a): 1\le a\le n/2\}$ is a perfect matching of $Z$,
then we define the \emph{standardization of $M$} to be the matching
$M'=\{(\pi(i_a),\pi(j_a)):1\le a\le n/2\}$ of $[n]$. \end{defn}

\begin{rmk}
\label{rmk:standardize}
Standardization of a set $Z\se \Z$ respects the relative order of 
the elements of $Z$. Therefore, if $M$ 
is a perfect matching of $Z$ and 
$M'$ is the standardization of $M$, then $M$ and $M'$ share the
same values under the functions $c,n,a,h, d_1$, and $d_2$ defined in Definition
\ref{def:matching_features}, because these functions depend only on the relative order of the elements involved in features. For
the standardization $M'$, the number $\abs{\interior(i,j)}$ for an arc $(i,j)\in
M'$ recovers Watson's definition of the \emph{span} of $(i,j)$, which is equal
to $j-i-1$, and $\wt(M')$ recovers the \emph{weight} function of Deodhar and
Srinivasan \cite{deodhar}. 
\end{rmk}

We will prove in Section \ref{sec:mob_level} that $\rho(R(M))=h(M)$ for each
$M\in \calm$, so that we can compute the level of $R(M)$ by counting crossings
and nestings of $M$. The following result gives further interpretations of the
height $h(M)$.

\begin{prop}
\label{prop:wt}
In the setting of Definition \ref{def:matching_features}, we have
\[
h(M)=\wt(M)=\lambda(M)=d_1(M)=d_2(M).
\]
\end{prop}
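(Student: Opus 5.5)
The plan is to reduce every quantity in the statement to a sum over features, that is, over unordered pairs of distinct arcs. By Remark \ref{rmk:standardize}, all five quantities are unchanged under standardization. This is clear for $h,d_1,d_2$. For $\wt$ it holds because $\abs{\interior(i,j)}$ only counts elements of $Z$. For $\lambda$ it holds because inversions depend only on relative order. So we may assume $Z=[2k]$ whenever convenient, although the argument below never needs it. For each quantity $X\in\{\wt,\lambda,d_1,d_2\}$ we will show that $X(M)=\sum_F x(F)$, up to an explicit correction coming from pairs within a single arc. Here the sum runs over features $F$, and $x(F)\in\{0,1,2\}$ depends only on whether $F$ is an alignment, a crossing, or a nesting. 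The goal is $x=0,1,2$ respectively, which is exactly $h(M)=c(M)+2n(M)$.

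\emph{Weight.} For an arc $p$, $\abs{\interior(p)}$ counts endpoints of other arcs lying strictly inside $p$. We regroup $\sum_{p}\abs{\interior(p)}$ by the other arc $p'$: the contribution of the feature $\{p,p'\}$ is the number of endpoints of $p'$ inside $p$ plus the number of endpoints of $p$ inside $p'$. For an alignment this is $0$. For a crossing $a<c<b<d$ it is $1+1=2$. For a nesting $a<c<d<b$ it is $2+0=2$. Hence $\sum_p\abs{\interior(p)}=2c(M)+2n(M)$, and subtracting $c(M)$ gives $\wt(M)=h(M)$.

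\emph{Length.} We count the inversions $x<y$ of $\tau=\tau_M$. If $x,y$ lie in the same arc then $x<y$ forces $\tau(x)=y>x=\tau(y)$, so each of the $k$ arcs contributes exactly one inversion. Otherwise $x,y$ lie in a feature $\{(a,b),(c,d)\}$ with $a<c$, and we check the four cross pairs directly:
\begin{itemize}
\item an alignment gives $0$ inversions, since all values on the first arc are smaller than those on the second;
\item a crossing $a<c<b<d$ gives $2$ inversions, namely $(a,d)$ and $(c,b)$;
\item a nesting $a<c<d<b$ gives all $4$.
\end{itemize}
Thus $\ell(\tau)=k+2c(M)+4n(M)$, and therefore $\lambda(M)=c(M)+2n(M)$.

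\emph{Rothe diagrams.} For $D_\texttt{FPF}(M)$, a pair $(i,j)$ with $i>j$ from the same arc would need $\tau(j)=i>i$, which is impossible. So again only features contribute. Checking the four ordered pairs (larger element first) in each feature type:
\begin{itemize}
\item an alignment contributes $0$, because $\tau(j)<i$ whenever $j$ is on the left arc;
\item a crossing contributes only $(c,a)$, so $1$;
\item a nesting contributes $(c,a)$ and $(d,a)$, so $2$.
\end{itemize}
Hence $d_1(M)=h(M)$. For $d_2$, we apply the order-reversing bijection $x\mapsto -x$ to $Z$. It sends $M$ to a matching $\bar M$ of $-Z$ with $\tau_{\bar M}(-x)=-\tau_M(x)$. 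Crossings go to crossings, nestings to nestings, and alignments to alignments, so $h(\bar M)=h(M)$. Moreover, the defining inequalities of $D_2(M)$ become exactly those of $D_\texttt{FPF}(\bar M)$ under $(i,j)\mapsto(-i,-j)$. This gives $d_2(M)=d_1(\bar M)=h(\bar M)=h(M)$.

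There is no real obstacle; every step is a finite case check on four-point configurations. The main care needed is the correct treatment of same-arc pairs: they contribute $k$ to $\ell(\tau_M)$, which is the reason for the shift by $k$ in $\lambda$, and they contribute nothing to $d_1$ and $d_2$.
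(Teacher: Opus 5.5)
Your proof is correct. It differs from the paper's in how the last three equalities are obtained.

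The paper proves $\wt(M)=h(M)$ exactly as you do: each crossing or nesting contributes $2$ to $\sum_{(i,j)\in M}\abs{\interior(i,j)}$. For the rest it relies on outside sources and a different bijection:
\begin{itemize}
\item it imports $\wt(M)=\lambda(M)$ from Watson's Proposition 9;
\item it imports $\lambda(M)=d_1(M)$ from Hamaker--Marberg--Pawlowski;
\item it proves $d_1(M)=d_2(M)$ with the bijection $D_\texttt{FPF}(M)\ra D_2(M)$ sending $(i,j)$ to the increasingly sorted pair $\{\tau_M(i),\tau_M(j)\}$.
\end{itemize}

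You instead show directly that $\lambda$ and $d_1$ each split into local contributions over features. Same-arc pairs contribute exactly $k$ to $\ell(\tau_M)$ and nothing to $D_\texttt{FPF}(M)$. You then get $d_2$ from $d_1$ via the order-reversing map $x\mapsto -x$, which preserves feature types and carries $D_2(M)$ onto $D_\texttt{FPF}(\bar M)$. This step is sound because your count for $d_1$ holds for matchings of any finite subset of $\Z$, so no restandardization is needed.

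Your route gives a self-contained, elementary proof and also yields refined identities. For example, $\ell(\tau_M)=k+2c(M)+4n(M)$, and $D_\texttt{FPF}(M)$ is indexed explicitly by the crossings and nestings of $M$: one cell $(c,a)$ per crossing and two cells $(c,a),(d,a)$ per nesting. The paper's route is shorter and ties the statistics to their sources, namely the Deodhar--Srinivasan weight and involution Rothe diagrams. The cost is dependence on external results whose conventions must be matched to the present setting.
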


\begin{proof}
By Remark \ref{rmk:standardize}, we may assume $Z$ is standardized and $M$ is a
perfect matching of the set $[2k]$. In this case, the asserted equalities appear
well known to experts, but we will fill in some details that we have not been
able to find in the literature. 

To relate $\wt(M)$ to $h(M)$, let $F=\{(a,b), (c,d)\}$ be a crossing or nesting
in $M$ with $a<c$, and let $N=\sum_{(i,j)\in M}\abs{\interior(i,j)}$. If $F$ is
a crossing, then $c \in \interior(a,b)$ and $b\in \interior(c,d)$, so $F$
contributes a count of exactly 2 towards the sum $N$. If $F$ is a nesting, then
$c$ and $d$ lie in $\interior(a,b)$ and $F$ also contributes a count of $2$ towards
$N$, corresponding to the fact that $c$ and $d$ are interior points of $(a,b)$.
It follows that $N=2(c(M)+n(M))$, which implies that $\wt(M)=N-c(M)=h(M)$. 

The fact that $\wt(M)=\lambda(M)$ follows immediately from \cite[Proposition
9]{watson14}. 

It follows from the first paragraph of \cite[Section 3]{hamaker18} and
\cite[Proposition 3.6]{hamaker18} that $\lambda(M)$ equals the cardinality of
$D_\texttt{FPF}(M)$, so we have $\lambda(M)=d_1(M)$.

Finally, if we let $f$ and $g$ be the maps that sort each pair of distinct integers $\{a,b\}$ where $a<b$
to $(a,b)$ and $(b,a)$, respectively, then a routine
verification shows that the maps $\eta: D_\texttt{FPF}(M)\ra D_2(M), (i,j)\mapsto
f(\{\tau_M(i),\tau_M(j)\})$ and $\eta': D_2(M)\ra D_\texttt{FPF}(M), (i,j)\mapsto
g(\{\tau_M(i),\tau_M(j)\})$ are well-defined mutual inverses of each other, so
we have $d_1(M)=d_2(M)$. This completes the proof.
\end{proof}

\subsection{Rothe diagrams}
\label{sec:mob_res}
We explain how to use the perfect matchings $M\in \calm$ to compute the
Rothe diagram of the orthogonal bases $R(M)$ in this section.
Throughout the section, we will denote the root $s_\al(\be)$ by $\al(\be)$ for
any $\al,\be\in \Phi$.

\begin{lemma}
\label{lem:res_prep}
Let $\al\in \Phi_+$ and $R\in \ob$.
Suppose that $\nine=\{i,j,k,i',j',k',a,b,c\}$.
\begin{enumerate}
\item If $\al\in \res(R)$, then $\al\notin R$, and the set
  \[
    \pra:=\{\beta\in R: B(\al,\beta)\neq 0\}
  \]
  has size 4.  
\item We have $r_{ij'k'}(p_{ij})=m_{jj'k'}$ and $r_{i'jk'}(p_{ij})=p_{ii'k'}$.
\item We have $r_{ij'k'}(m_{ij})=p_{jj'k'}$ and $r_{i'jk'}(m_{ij})=m_{ii'k'}$.
\item We have $r_{ijk'}(p_{ijk})=p_{kk'}$.
\item We have 
  \[
    r_{i'j'k'}(r_{ijk})= 
  \begin{cases} 
    r_{i'j'k'}(p_{ijk})=m_{abc}>0 & \text{if $9\in \{i,j,k\}$};\\
    r_{i'j'k'}(m_{ijk})=p_{abc}<0 & \text{if $9\in \{i',j',k'\}$};\\
    r_{i'j'k'}(m_{ijk})=p_{abc}>0 & \text{if $9\notin \{i,j,k,i',j',k'\}$}.
  \end{cases}
  \]
\end{enumerate}
\end{lemma}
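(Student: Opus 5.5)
Parts (ii)--(v) are direct reflection computations with Equation \eqref{eq:refl} and the pairing rules of Remark \ref{rmk:orthog}. Part (i) is a counting argument in the orthonormal coordinates of $R$. I will do (i) first and then the explicit formulas.

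For (i), Remark \ref{rmk:self_topple} gives $\al\notin R$ immediately, since $\al$ topples itself and so cannot dominate $R$. To get $\abs{\pra}=4$, I will expand $\al$ in the orthogonal basis $R$ of $V$. All roots have $B(\be,\be)=2$, so
\[
\al=\sum_{\be\in R}\tfrac{1}{2}B(\al,\be)\,\be,
\]
and taking norms gives $2=\tfrac12\sum_{\be\in R}B(\al,\be)^2$. For distinct roots $\al,\be$ with $\al\ne-\be$, we have $B(\al,\be)\in\{0,\pm1\}$. Since $\al\notin R$ and $\al$ is positive (so $-\al\notin R$ as well), every coefficient lies in $\{0,\pm1\}$. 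Hence the number of $\be$ with $B(\al,\be)\ne0$ is exactly $4$. (Domination is not really needed beyond the fact $\al\notin R$.)

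For (ii)--(iv), I will compute $s_\ga(\be)=\be-B(\ga,\be)\ga$ using Remark \ref{rmk:orthog}. This requires fixing the sign of $r_{ij'k'}$, which is $p_{ij'k'}$ or $m_{ij'k'}$ depending on whether $9$ lies in the support. Because $s_\ga=s_{-\ga}$, the sign of $r$ is irrelevant, so I can use $\ga=e_i+e_{j'}+e_{k'}$ throughout. Then:
\begin{itemize}
\item In (ii), $B(e_i+e_{j'}+e_{k'},e_i-e_j)=1$, so the result is $e_i-e_j-(e_i+e_{j'}+e_{k'})=-(e_j+e_{j'}+e_{k'})=m_{jj'k'}$.
\item Also in (ii), $B(e_{i'}+e_j+e_{k'},e_i-e_j)=-1$, which gives $e_i+e_{i'}+e_{k'}=p_{ii'k'}$.
\item Part (iii) follows by negating (ii), since $s_\ga$ is linear.
\item In (iv), Equation \eqref{eq:overlap} gives $B=2-1=1$, so the result is $e_k-e_{k'}=p_{kk'}$.
\end{itemize}

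For (v), the supports $\{i,j,k\}$ and $\{i',j',k'\}$ are disjoint, so $B(p_{ijk},p_{i'j'k'})=-1$ by Equation \eqref{eq:overlap}. Hence
\[
s(p_{ijk})=p_{ijk}+p_{i'j'k'}=-(e_a+e_b+e_c)=m_{abc},
\]
using $\sum_i e_i=0$. By linearity, $s(m_{ijk})=p_{abc}$. The three cases then follow from the description of $\posroots$ in Equation \eqref{eq:pos}:
\begin{itemize}
\item If $9\in\{i,j,k\}$, then $r_{ijk}=p_{ijk}$, and $m_{abc}$ with $9\notin\{a,b,c\}$ is positive.
\item Otherwise $r_{ijk}=m_{ijk}$, and the image is $p_{abc}$. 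This is positive exactly when $9\in\{a,b,c\}$, i.e.\ when $9\notin\{i,j,k,i',j',k'\}$, and negative when $9\in\{i',j',k'\}$.
\end{itemize}

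The only delicate point is this sign and positivity bookkeeping in (v), and it is routine once the positive system \eqref{eq:pos} is used.
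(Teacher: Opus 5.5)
Your proof is correct. For (ii)--(v) you follow the paper: you replace each $r_{xyz}$ by $p_{xyz}$ via $s_\be=s_{-\be}$, compute with Remark \ref{rmk:orthog} and Equation \eqref{eq:overlap}, get (iii) from (ii) by linearity, and read off the signs in (v) from \eqref{eq:pos}.

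The genuine difference is in (i). The paper obtains $\abs{P_R(\al)}=4$ only by citing \cite[Proposition 2.1 (i)]{gx5}. You prove it directly: you expand $\al$ in the orthogonal basis $R$ and compare norms, which gives $\sum_{\be\in R}B(\al,\be)^2=4$ with every term in $\{0,1\}$.

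Your route is self-contained and shows slightly more. The count $4$ holds for every positive root $\al\notin R$, so domination plays no role beyond giving $\al\notin R$. In fact $\al=\frac12\sum_{\be\in P_R(\al)}\pm\be$.

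The cost is three auxiliary facts, all easy to check from the definition of $B$ and Remark \ref{rmk:orthog}:
\begin{enumerate}
\item $R$ is a basis of $V$, since $B$ is the restriction of the standard inner product on $\R^9$ and hence positive definite on $V$.
\item Every root has $B(\be,\be)=2$.
\item $B(\al,\be)\in\{0,\pm1\}$ whenever $\al\neq\pm\be$.
\end{enumerate}
You assert the third without justification. Integrality of $B$ on roots together with Cauchy--Schwarz settles it.

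The paper's citation is shorter but leaves the lemma dependent on \cite{gx5}.
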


\begin{proof}
  If $\al\in \res(R)$, then we have $\al\notin R$ by Remark
  \ref{rmk:self_topple}. The fact that $\pra$ has size 4 then follows directly
  from \cite[Proposition 2.1 (i)]{gx5}. 

  Since $s_\be=s_{(-\be)}$ for any $\be\in \Phi$, we may pretend that in the
  proofs of (ii)--(v) that all the roots appearing in the form $r_{xyz}$ are
  $p_{xyz}$, even when $p_{xyz}<0$. The conclusions of (ii) and (iv) then follow
  readily from Remark \ref{rmk:orthog} and direct computation. For example,
  since $\abs{\{i,j,k\}\cap\{i,j,k'\}}=2$, Remark \ref{rmk:orthog} implies that
  $B(p_{ijk},p_{ijk'})=2-1=1$, so 
  \begin{equation}
  \label{eq:typical}
    r_{ijk'}(p_{ijk})=p_{ijk'}(p_{ijk})=p_{ijk}-1\cdot p_{ijk'}=p_{kk'}
  \end{equation}
  and (iv) holds. 

  Combining computations similar to Equation \eqref{eq:typical} and the relation
  $\sum_{p\in \nine}e_p=0$ shows that $r_{i'j'k'}(p_{ijk})=m_{abc}$ and
  $r_{i'j'k'}(m_{ijk})=p_{abc}$, and the conclusions of (v) follow. For example,
  in the third case, where $9\notin \{i,j,k,i',j',k'\}$ and therefore $9\in
  \{a,b,c\}$, we have $r_{ijk}=m_{ijk}$ since $9\notin \{i,j,k\}$, and
  $B(p_{i'j'k'},m_{ijk})=-B(p_{i'j'k'},p_{ijk})=1$, so we have 
  \[
    r_{i'j'k'}(r_{ijk})=p_{i'j'k'}(m_{ijk})=m_{ijk}-p_{i'j'k'}=m_{ijk}+m_{i'j'k'}=p_{abc}>0.
  \]
  Finally, (iii) follows from (ii) by linearity of the reflections. 
\end{proof}

\begin{theorem}
\label{thm:mob_res}
Let $M=\{(i_1,j_1), (i_2,j_2), (i_3,j_3), (i_4,
j_4),(0,z)\}\in \calm$, let $\tau=\tau_M$, and let $R=R(M)\in \ob$.
\begin{enumerate}
\item If $M$ is of type I, then  we have
$\res(R)=C_1 \cup C_2$,
where 
  \[
    C_1=\{m_{i9}: 1\le i\le 8\} \text{\; and\; } C_2=  \{p_{ij9}: 1\le i<j\le 8,
    \tau(i)>j, \tau(j)>i\}. 
  \]
\item If $M$ is of type II and $y=\tau(9)$, then we have $\res(R)=C_1\cup
  C_2\cup C_3$,  where
  \begin{align*} 
  C_1&=\{m_{i9}: y<i\le 8\},\\
  C_2&=\{p_{iy9}: 1\le i<z, i\neq y\},\text{ and}\\
  C_3&=\{m_{ijz}: i, j\notin \{0,9,y,z\}, i<j, \tau(i)<j,\tau(j)<i\}. 
  \end{align*}
\end{enumerate}
\end{theorem}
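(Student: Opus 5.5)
The plan is a direct case analysis over the four shapes of positive roots, $m_{i9}$, $p_{ij}$ ($i<j\le 8$), $p_{ij9}$ and $m_{ijk}$ ($i<j<k\le 8$). For each candidate $\al\in\Phi_+$ I will compute $s_\be(\al)$ for every $\be\in R(M)$ and decide whether $\al$ dominates $R(M)$. Only the roots $\be\in\pra$ matter, since $s_\be(\al)=\al>0$ when $B(\al,\be)=0$.

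Lemma \ref{lem:res_prep}(i) gives a first filter: if $\al\notin R$ and $|\pra|\neq4$, then $\al\notin\res(R)$. For the remaining $\be\in\pra$, the sign of $s_\be(\al)$ will be read off from Lemma \ref{lem:res_prep}(ii)--(v), together with the easy case of two $A$-type roots, where $s_{p_{ab}}(p_{cd})$ is obtained by a transposition. The key point is that every reflected root is again of a standard form, so its positivity is decided by a simple membership test via \eqref{eq:pos}:
\begin{itemize}
\item $p_{ab}>0$ iff $a<b\le8$, or $b=9$ with the root written as $m_{a9}$;
\item $p_{ab9}>0$ always;
\item $m_{abc}>0$ iff $9\notin\{a,b,c\}$.
\end{itemize}

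Before the case split, I will record the shape of $R$.
\begin{itemize}
\item \textbf{Type I} ($z=9$). Then $R=\{r_{i_aj_a},\,r_{i_aj_a9}\}$ with $i_a,j_a\le 8$, so $R=\{p_{i_aj_a},\,p_{i_aj_a9}\}$.
\item \textbf{Type II.} Here $9$ lies in the arc $(y,9)$, and $z\le8$. $R$ consists of the roots $r_{i_aj_a}$, which include $m_{y9}$, together with the roots $r_{i_aj_az}$: one of these is $p_{yz9}$ and the others are $m_{i_aj_az}$.
\end{itemize}

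Then I treat each shape of $\al$ and count which $\be$ topple it.

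\textbf{Type I.}
\begin{itemize}
\item $m_{i9}$ meets exactly the four roots of $R$ involving the arc through $i$ (two roots $p$ and two roots $p_{\cdot\cdot9}$). The reflections all turn out positive, which gives $C_1$.
\item $p_{ij}$ with $i,j\le8$ is toppled by $p_{\tau(i)\cdots}$-type roots. I expect it never to survive: one of $s_{p_{i\tau(i)}}$ or $s_{p_{j\tau(j)9}}$ produces a negative root.
\item $m_{ijk}$ is likewise excluded using Lemma \ref{lem:res_prep}(v), second case.
\item $p_{ij9}$ meets $p_{i\tau(i)}$, $p_{j\tau(j)}$ and the corresponding $9$-triples. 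The conditions $\tau(i)>j$ and $\tau(j)>i$ are exactly what make the transposition images $p_{\tau(i)j9}$ etc. positive, and reflections across the triples give $p_{\cdot\cdot}$ with correctly ordered indices. This reproduces $C_2$, which is the involution Rothe diagram $D_{\texttt{FPF}}$ shape.
\end{itemize}

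\textbf{Type II.} The same analysis applies, but with $z$ and $y$ playing special roles.
\begin{itemize}
\item $m_{i9}$ interacts with $m_{y9}$, and $s_{m_{y9}}(m_{i9})=p_{iy}$ type. This forces $i>y$, giving $C_1$.
\item $p_{iy9}$: the relevant reflections are by $m_{y9}$, by $p_{yz9}$, and by the roots of the arc at $i$. The condition $i<z$ arises from $s_{p_{yz9}}(p_{iy9})=p_{iz}$ via (iv), needing $i<z$.
\item $m_{ijz}$ meets the roots of the arcs at $i$ and $j$. Via (ii)/(iii) and (v), the signs reduce to $\tau(i)<j$ and $\tau(j)<i$, giving the $D_2$-shape $C_3$.
\item All other roots ($p_{ij}$, $p_{ij9}$ not containing $y$, $m_{ijk}$ not containing $z$, and so on) must be shown to be toppled. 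For these I would exhibit a single explicit toppler in each case, again using Lemma \ref{lem:res_prep}.
\end{itemize}

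The main obstacle is bookkeeping, not ideas. The hardest part is the exclusion direction in type II, where there are many subcases depending on whether $i,j$ coincide with $y$ or $z$ and on the relative order of $i,j,\tau(i),\tau(j)$. To control this, I will first use Lemma \ref{lem:res_prep}(i) to discard candidates whose supports meet the arcs wrongly (so that $|\pra|\neq4$). I will then organize the remaining cases by the arcs that the support of $\al$ meets.
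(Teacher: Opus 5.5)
Your method is the paper's own. For each shape of $\al\in\Phi_+\setminus R$, you test only the reflections $s_\be$ with $\be\in\pra$ and read off signs via Lemma \ref{lem:res_prep}; this is exactly what Tables \ref{tab:mob1} and \ref{tab:mob2} record. However, several of the steps you actually commit to are wrong, and one of them fails.

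\textbf{The ``filter'' is vacuous.} It never discards anything. Since $R$ is an orthogonal basis of roots of norm $2$, we have $2=B(\al,\al)=\frac12\sum_{\be\in R}B(\al,\be)^2$ with $B(\al,\be)\in\{0,\pm1\}$. Hence $\abs{\pra}=4$ for \emph{every} positive $\al\notin R$. Lemma \ref{lem:res_prep}(i) only tells you which four reflections to test. It cannot thin out the type II cases, so every excluded root needs an explicit toppler.

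\textbf{Your type I toppler for $p_{ij}$ fails.} Take $j\neq\tau(i)$. Then $s_{p_{j\tau(j)9}}(p_{ij})=p_{i\tau(j)9}>0$ always, and $s_{r_{i\tau(i)}}(p_{ij})=p_{\tau(i)j}$ is negative only when $\tau(i)>j$. For $M$ of type I containing $(1,2)$ and $(3,4)$, and $\al=p_{23}$, neither of your two roots topples. The root that always works is $p_{i\tau(i)9}$, which sends $p_{ij}$ to $m_{j\tau(i)9}<0$.

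\textbf{Several sets $\pra$ are misidentified.} This matters for the inclusion direction.
In type I, $p_{i\tau(i)9}$ is orthogonal to $m_{i9}$, since their supports meet in $\{i,9\}$. The four roots are $r_{i\tau(i)}$ and the three $p_{j\tau(j)9}$ from arcs avoiding $i$, with images $m_{\tau(i)9}$ and $m_{ij\tau(j)}$.
For $p_{ij9}$ in type I, the transposition images $p_{\tau(i)j9}$ and $p_{i\tau(j)9}$ are automatically positive. The conditions $\tau(i)>j$ and $\tau(j)>i$ come instead from $s_{p_{i\tau(i)9}}(p_{ij9})=p_{j\tau(i)}$ and $s_{p_{j\tau(j)9}}(p_{ij9})=p_{i\tau(j)}$.
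In type II, $m_{y9}$ and $m_{i\tau(i)z}$ are orthogonal to $p_{iy9}$. The correct four are $r_{i\tau(i)}$, $p_{yz9}$, and the two $m_{abz}$ from middle arcs avoiding $i$. Your list omits the last two; they give positive roots by the first case of Lemma \ref{lem:res_prep}(v).
For $m_{ijz}$, the signs come from transpositions and Lemma \ref{lem:res_prep}(iv), not from (ii), (iii) or (v).

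\textbf{The postponed type II exclusions.} Topplers that work are as follows.
\begin{itemize}
\item $m_{y9}$ for $m_{i9}$ with $i<y$, for $p_{iy}$, for $p_{ij9}$ with $y\notin\{i,j\}$, and for $m_{ijk}$ with $y\in\{i,j,k\}$.
\item $m_{abz}$, for a middle arc $(a,b)$ avoiding $i$, for $p_{iz}$.
\item $p_{yz9}$ for $p_{ij}$ with $i\in\{y,z\}$ and $j\notin\{y,z\}$, for $p_{iy9}$ with $i>z$, and for $m_{ijk}$ with $\{i,j,k\}\cap\{y,z\}=\emptyset$.
\item $m_{j\tau(j)z}$ for $p_{ij}$ with $\{i,j\}\cap\{y,z\}=\emptyset$.
\end{itemize}
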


\begin{proof}
  Lemma \ref{lem:res_prep} (i) implies that if
  $\al\in \res(R)$, then $\al\notin R$ and only four elements in $R$,
  namely those in the set $\pra=\{\be\in R: B(\al,\be)\neq0\}$, have associated
  reflections that could possibly topple $\al$. Therefore, to
  find $\res(R)$ it suffices to characterize the positive roots $\al$ not in $R$
  such that $\beta(\al)>0$ for all $\be\in \pra$. Recall from \eqref{eq:pos}
  that any positive root has one of four possible standard forms: $p_{ij}$ for some
  $1\le i<j\le 8$, or $m_{i9}$ for some $1\le i \le 8$, or $p_{ij9}$ for some
  $1\le i<j\le 8$, or $m_{ijk}$ for some $1\le i<j<k\le 8$.

  Assume that $M$ is of type I, so that we have $z=\tau(0)=9$ and we can write
  $R$ in the form \[ R(M)=\{p_{i_1j_1}, p_{i_2j_2}, p_{i_3j_3}, p_{i_4j_4},
  p_{i_1j_19}, p_{i_2j_29}, p_{i_3j_39}, p_{i_4j_49}\}. \] We will find the
  elements $\al\notin R$ such that $\be(\al)>0$ for all $\be\in \pra$ using
  Table \ref{tab:mob1}. The first column of the table shows the standard form of
  $\al$ as recalled in the last paragraph, where we assume $i<j$ whenever $i$
  and $j$ appear together in a subscript. Note that the assumption that
  $\al\notin R$ places further constraints on $\al$, such as the condition that
  $j\notin \tau(i)$ if $\al=p_{ij}$.
  In the last two columns, where for each an
  \emph{$i$-excluding} arc ($i\in \ten$) means an arc that does not contain $i$,
  we either identify one element $\be\in \pra$ that topples $\al$ in the second
  column, in which case we display the element $\be(\al)<0$ in the third column,
  or we describe all the four elements $\be\in\pra$ in the second column and
  write down the four respective elements $\be(\al)$ in the third column.   In
  the first case, we may conclude immediately that $\al\notin \res(R)$; in the
  second case, we follow each root $\be(\al)$ with ``$>0$" if it is guaranteed
  to be positive by inspection, so that we have $\al\in \res(R)$ if and only if
  the remaining roots of the form $\be(\al)$ are positive. The fact that the
  elements $\be$ in the second column are indeed in $\pra$ follows from direct
  inspection, Definition \ref{defn:abs}, and Remark \ref{rmk:orthog}.
  The assertions indicated in the third column can be verified in one of two
  ways, as follows. If $\be$ is of the form $r_{ij}$, then we can compute
  $\be(\al)$ using the fact that $s_\be$ simply acts as the transposition $(ij)$
  on coordinate indices by Remark \ref{rmk:S9}, and then we can determine the
  sign of $\be(\al)$ using Equation \eqref{eq:pos}. If $\be$ is of the form
  $r_{i'j'k'}$, then the sign of $\be(\al)$ can be determined using the formulas
  in Lemma \ref{lem:res_prep} (ii)--(v). For example, the last row of the table
  lists an element $\be=p_{i'j'9}\in\pra $ that topples $\al$ in the second
  column, and the assertion that $\be(\al)<0$ follows immediately from Lemma
  \ref{lem:res_prep} (v).

\begin{table}[h!]
  \resizebox{\textwidth}{!}{
\begin{tabular}{@{}lll@{}}
\toprule
  $\al$ & $\be\in \pra$ & $\be(\al)$ \\
  \midrule
  $p_{ij}, j\neq \tau(i)$ & $p_{i\tau(i)9}$  & $m_{j\tau(i)9}<0$ \\
  \midrule
  $m_{i9}$ & $r_{i\tau(i)}$, or $p_{j\tau(j)9}$ for the $i$-excluding arcs $\{j,\tau(j)\}$&
  $m_{\tau(i)9}>0$, or $m_{ij\tau(j)}>0$\\
  \midrule
           $p_{ij9}, j\neq \tau(i)$& $r_{i\tau(i)}, r_{j\tau(j)}, p_{i\tau(i)9},
  p_{j\tau(j)9}$   & $p_{j\tau(i)9}>0, p_{i\tau(j)9}>0, p_{j\tau(i)}, p_{i\tau(j)}$ \\ 
  \midrule
   $m_{ijk}$ & $p_{i'j'9}$ where $(i',j')$ is any arc such that $i',j'\notin \{i,j,k\}$  &  $p_{i'j'9}(m_{ijk})<0$ \\
   \bottomrule
\end{tabular}
}
\caption{Analysis of $\be(\al)$, I}
\label{tab:mob1}
\end{table}

The first and last rows of Table \ref{tab:mob1} imply that $\res(R)$ contains no
root of the form $p_{ij}$ or $m_{ijk}$, the second row implies that every root
of the form $m_{i9}$ where $1\le i\le 8$ lies in $\res(R)$, and the third row
implies that a root of the form $p_{ij9}$  where $1\le i<j\le 8$ lies in
$\res(R)$ if and only if $p_{j\tau(i)}$ and $p_{i\tau(j)}$  are positive roots,
which occurs if and only if $j<\tau(i)$ and $i<\tau(j)$. This proves (i). 

  Now assume that $M$ is of type II, $\tau(0)=z$, and $\tau(9)=y$, so that we
  can write $M$ in the form 
   \[
    R(M)=\{m_{y9}, p_{i_1j_1}, p_{i_2j_2}, p_{i_3j_3}, p_{9yz}, m_{i_1j_1z},
    m_{i_2j_2z}, m_{i_3j_3z}\}.
  \]
  We can find the elements of $\res(R)$ using Table \ref{tab:mob2}, which is a
  direct analogue of Table \ref{tab:mob1} except that each possible standard
  form of $\al$ now corresponds a block of lines representing a complete, mutually
  exclusive list of possible subcases. 

The first block of Table \ref{tab:mob2} (which is shown in landscape mode in
Appendix \ref{sec:wide_tables} due to its width) implies that $p_{ij}\notin\res(R)$ for any $1\le
i<j\le 8$. The last three blocks imply that if $\al$ is of the form $m_{i9},
p_{ij9}$, or $m_{ijk}$, then $\al$ dominates $R$ if and only if it lies in the
set $C_1, C_2$, or $C_3$ from (ii), respectively. This completes the proof.
\end{proof}

\subsection{Levels}
\label{sec:mob_level}

The explicit description of the Rothe diagrams in Theorem \ref{thm:mob_res}
allows us to compute the level of $R(M)$ as the height of $M$ for each $M\in
\calm$, as follows.

\begin{prop}
\label{prop:mob_level}
We have $\rho(R(M))=h(M)$ for all $M\in \calm$.
\end{prop}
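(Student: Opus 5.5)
We prove the statement by computing $\rho(R(M))=\abs{\res(R(M))}$ from the explicit description in Theorem \ref{thm:mob_res}, and then matching the count with one of the equivalent forms of $h(M)$ in Proposition \ref{prop:wt}. The argument splits according to whether $M$ is of type I or type II.

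\emph{Type I.} Here $\tau(0)=9$, and $\res(R)=C_1\cup C_2$ is a disjoint union, since $C_1$ consists of roots $m_{i9}$ and $C_2$ of roots $p_{ij9}$. Hence $\abs{C_1}=8$, and $C_2$ is in bijection with the pairs $i<j$ in $\{1,\dots,8\}$ with $\tau(i)>j$ and $\tau(j)>i$. Consider the pairs $(j,i)$ with $j>i$ in all of $\ten$ satisfying $\tau(j)>i$ and $\tau(i)>j$; these form $D_\texttt{FPF}(M)$.
\begin{itemize}
\item For $i=0$, $\tau(0)=9$, so the condition becomes $\tau(j)>0$. This holds for every $j\in\{1,\dots,9\}$, giving $9$ pairs.
\item For $j=9$ and $i\ge1$, the condition $\tau(9)=0>i$ fails, so there are no such pairs.
\item The remaining pairs lie in $\{1,\dots,8\}$ and are exactly those counted by $C_2$.
\end{itemize}
So $d_1(M)=9+\abs{C_2}$, which does not equal $8+\abs{C_2}$. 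The discrepancy arises because the pair $(9,0)$ itself satisfies $i<j$ but should not count as a diagram cell. Checking against $M$ with arcs $(0,9),(1,2),(3,4),(5,6),(7,8)$: the crossing/nesting count is $h(M)=2\cdot4=8$, and $C_2$ is empty, so $\rho=8$. This matches $8+\abs{C_2}$, so I would recount carefully. The pair $(9,0)$ needs $\tau(9)=0>0$, which is false. The pairs $(j,0)$ need $\tau(j)>0$ and $\tau(0)=9>j$, giving $j\in\{1,\dots,8\}$, that is, $8$ pairs. So indeed $d_1(M)=8+\abs{C_2}=\abs{\res(R)}$, and Proposition \ref{prop:wt} gives $h(M)$.

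\emph{Type II.} Here $\tau(0)=z$, $\tau(9)=y$, and $y,z\notin\{0,9\}$. The sets $C_1,C_2,C_3$ are pairwise disjoint because their roots have different standard forms. We have $\abs{C_1}=8-y$ and $\abs{C_2}=z-1-[y<z]$. The set $C_3$ is in bijection with the pairs $i<j$ in $Z'=\ten\setminus\{0,9,y,z\}$ satisfying $\tau(i)<j$ and $\tau(j)<i$, which is $D_2$ of the restricted matching $M'$ on $Z'$. The natural target is $d_2(M)$, computed on all of $\ten$. I would decompose $D_2(M)$ according to how many of the points $0,9,y,z$ a pair involves. Equivalently, one can decompose $h(M)$ by arcs: $h(M)=h(M')$ plus the crossing/nesting contributions of the features involving the arcs $(0,z)$ and $(y,9)$.

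For the second route, the features are counted as follows.
\begin{itemize}
\item The arc $(0,z)$ nests every arc inside $(0,z)$ and crosses every arc with exactly one endpoint in $(0,z)$.
\item Its contribution, counting nestings with weight $2$, equals $\abs{\interior(0,z)}=z-1$, adjusted by the feature with $(y,9)$ when $y<z$.
\item Symmetrically, $(y,9)$ contributes $8-y$, and the feature $\{(0,z),(y,9)\}$ is a crossing or an alignment, counted once.
\end{itemize}
Since $C_3$ corresponds to $D_2(M')=h(M')$ by Proposition \ref{prop:wt}, applied to the standardization of $M'$ via Remark \ref{rmk:standardize}, the totals agree: the $(0,z)$ and $(y,9)$ contributions give $(z-1)+(8-y)$, and the mutual feature correction gives $-[y<z]$, which is exactly $\abs{C_1}+\abs{C_2}$.

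The main obstacle is this bookkeeping step: showing that every feature involving $(0,z)$ or $(y,9)$ contributes exactly as stated, so that the correction for the crossing $\{(0,z),(y,9)\}$ precisely accounts for the missing element $i=y$ in $C_2$. I would verify it with the span-based formula $h=\wt$, namely $\sum\abs{\interior}-c(M)$, applied to $M$ and $M'$.
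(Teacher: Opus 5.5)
Your proposal is correct and is essentially the paper's proof. In type II you decompose $h(M)$ exactly as the paper does: $h(M')$, plus $8-y$ from the features involving $(y,9)$, plus $z-1$ from the features involving $(0,z)$, minus the shared crossing $\{(0,z),(y,9)\}$ when $y<z$. You then match these with $\abs{C_1}$, $\abs{C_2}=z-1-[y<z]$ and $\abs{C_3}=d_2(M')$. In type I you apply $h=d_1$ to all of $M$, noting that the only cells of $D_{\texttt{FPF}}(M)$ involving $0$ or $9$ are the $8$ pairs $(j,0)$ with $1\le j\le 8$; the paper instead writes $h(M)=h(M')+8$ and $\abs{C_2}=d_1(M')$, which is equivalent bookkeeping. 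In a write-up, delete the first bullet's count of $9$ (the condition $\tau(0)=9>j$ excludes $j=9$) and keep only your corrected recount.
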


\begin{proof}
If $M$ is of type I, then $M$ consists of the arc $A=(0,9)$ and four middle arcs
that form a perfect matching, $M'$, of 
the set $\{1,2,\dots, 8\}$. Each of these middle arcs forms a nesting with $A$,
and it follows that we have 
\[
h(M)=c(M)+2n(M)=c(M')+2(n(M')+4)=h(M')+8.
\]
On the other hand, for the sets $C_1$ and $C_2$ defined as in Theorem
\ref{thm:mob_res} (i), it follows by inspection that we have
$\abs{C_1}=8$ and $\abs{C_2}=d_1(M')$, which implies that
\[
\rho(R(M))= 8 + d_1(M').
\]
Since $h(M')=d_1(M')$ by Proposition \ref{prop:wt},  it follows that
$\rho(R(M))=h(M)$.

Now suppose that $M$ is of type II, let $y=\tau_M(9)$, and let $z=\tau_M(0)$.
Let $A_9=(y,9)$,
let $A_0=(0,z)$, and let $M'$ be the perfect matching of
the set $(\ten\setminus\{0,9,y,z\})$ formed by the remaining three arcs in $M$. 
If $F$ is a nesting or crossing in $M$, then $F$ satisfies one of
the following mutually exclusive conditions: 
\begin{enumerate}
  \item[(a)] both arcs in $F$ are $M'$, so that $F$ is a feature of $M'$;
  \item[(b)] one of the arcs in $F$ is $A_9$;
  \item[(c)] one of the arcs in $F$ is $A_0$, and the other arc is
    not $A_9$. 
\end{enumerate}
The height $h(M)=(c+2n)(M)$ of $M$ is a weighted sum of the counts of crossings and
nestings in $M$, with each crossing counted once and each nesting counted twice.
Towards this weighted sum, the features satisfying condition (a) contribute
$h(M')$. By the second paragraph in the proof of Proposition \ref{prop:wt}, the features satisfying (b)
contribute exactly 
\[
c_1:=\abs{\interior(y,9)}=8-y
\] weighted counts. The weighted
count contributed by the features satisfying (c) is exactly 
the number
\[
c_2 :=  
\begin{cases}
  \abs{\interior(0,z)}-1= z-2 & \text{if $y<z$};\\
  \abs{\interior(0,z)} = z-1 & \text{if $y>z$},
\end{cases}
\]
where the extra ``$-1$" in the first case accounts for the fact that if $y<z$
then one of the crossings containing $A_0$ is $\{A_0,A_9\}$. It follows that 
\[
h(M)=h(M')+c_1+c_2.
\]
On the other hand, for the sets $C_1, C_2$ and $C_3$ defined as in Theorem
\ref{thm:mob_res} (iii), it follows by inspection that
$\abs{C_1}=c_1$, $\abs{C_2}=c_2$, and $\abs{C_3}=d_2(M')$, so we have 
\[
\rho(R(M))= c_1+c_2+d_2(M').
\]
Since $h(M')=d_2(M')$ by Proposition \ref{prop:wt},  it follows that
$\rho(R(M))=h(M)$.
\end{proof}

\begin{rmk}
\label{rmk:features}
For each $M\in \calm$, it is possible to find an explicit bijection $\zeta$ from
the features in $M$ (in the sense of Definition \ref{def:matching_features}) to
the features in $R(M)$ (in the sense of Definition \ref{def:cna}) that restricts
to bijections between the crossings (respectively, nestings, alignments) in $M$
and the crossings (respectively, nestings, alignments) in $R(M)$. In particular,
we have $C(R(M))=c(M)$ and $N(R(M))=n(M)$. This allows an
alternative proof of Proposition \ref{prop:mob_level}, via the chain of
equations
\[
\rho(R(M))=C(R(M))+2N(R(M))=c(M)+2n(M)=h(M),
\]
where the first equality holds by Remark \ref{rmk:theta_C} (i).
\end{rmk}

Recall from Definition \ref{defn:S9} and Remark \ref{rmk:S9} that the Weyl group
$W$ of $\Phi$ contains a subgroup $W'\cong S_9$ generated by the set of
reflections $S'=\{s_1, s_3, s_4, \dots, s_8, s_9=s_\theta\}$ that acts on
$\Phi$ by permuting coordinates, with $s_1, s_3, s_4, \dots,$ and $s_9$ acting
as the transpositions $(12), (23), (34), \dots,$ and $(89)$, respectively,  and
that $(W',S')$ is a Coxeter system of type $A_8$. The group $S_\ten$ acts on
$\calm$ under the obvious permutation action on the elements of $\ten$, and it
is well known that $\calm$ is a quasiparabolic set for $S_\ten$ under the height
function given by $\lambda$ (see, for example, \cite[Theorem 4.6]{rains13}).
Thus, once we identify $W'$ with the natural copy of $S_9$ in $S_\ten$, the fact
that $\lambda=h$ by Proposition \ref{prop:wt} implies that $(\calm,h)$ is a
quasiparabolic set for $W'$. On the other hand, it follows from the definition
of $R(M)$ for each $M\in \calm$ that $\mob$ is closed under the action of $W'$,
so since $\ob$ is a quasiparabolic set for $W$ under the level function $\rho$,
we can naturally view $\mob$ as a quasiparabolic set for $W'$ by restriction.
The results of this section show that $\calm$ and $\mob$ are isomorphic as
quasiparabolic $W'$-sets, in the following sense.

\begin{cor}
\label{cor:matching_iso}
The map $f: \calm\ra \mob, M\mapsto R(M)$ is an isomorphism of quasiparabolic
$W'$-sets, in the sense that it is a $W'$-equivariant and level-preserving 
bijection.
\end{cor}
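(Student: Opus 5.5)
The plan has three parts: bijectivity, equivariance, and level preservation. Bijectivity is already contained in Lemma \ref{lemm:mob_sum}. Level preservation follows by combining Proposition \ref{prop:mob_level}, which gives $\rho(R(M))=h(M)$, with Proposition \ref{prop:wt}, which gives $h(M)=\lambda(M)$. So the level function $h$ (equivalently $\lambda$) on $\calm$ corresponds exactly to $\rho$ on $\mob$. The only real content left is $W'$-equivariance: we need $f(w\cdot M)=w\cdot R(M)$ for all $w\in W'$ and $M\in\calm$.

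Here $W'$ acts on $\calm$ through the natural copy of $S_9$ in $S_\ten$, namely the permutations of $\ten$ fixing $0$. Since $W'$ is generated by $S'$, it is enough to check equivariance for each generator $s\in S'$. By Remark \ref{rmk:S9}, each such $s$ acts on $V$ as a transposition $\pi=(t\ t{+}1)$ of the coordinate indices in $\nine$.

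For such a $\pi$, each root transforms as
\[
\pi(p_{ij})=e_{\pi(i)}-e_{\pi(j)}, \qquad \pi(\pm p_{ijk})=\pm p_{\pi(i)\pi(j)\pi(k)}.
\]
Because $\abs{\cdot}$ merely selects a sign, this gives $\abs{\pi(r_{ij})}=r_{\pi(i)\pi(j)}$ and $\abs{\pi(r_{ijk})}=r_{\pi(i)\pi(j)\pi(k)}$, where the subscripts are read as unordered sets. This is legitimate: by Definition \ref{defn:abs}, $r_{ij}$ and $r_{ijk}$ depend only on the underlying set of indices. Now $w\cdot M$ has arcs $\{\pi(i_a),\pi(j_a)\}$ for $a\le 4$ together with $\{0,\pi(z)\}$, since $\pi$ fixes $0$. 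Applying $\pi$ entrywise to the definition of $R(M)$ in Definition \ref{defn:mob_construction} therefore produces
\[
\{r_{\pi(i_a)\pi(j_a)},\ r_{\pi(i_a)\pi(j_a)\pi(z)} : 1\le a\le 4\}=R(\pi M),
\]
which is the required identity $s\cdot R(M)=R(s\cdot M)$.

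The one point needing care is that the arc containing $0$ plays a distinguished role in $R(M)$, so we must confirm that $\pi$ sends the non-$0$ arcs of $M$ to the non-$0$ arcs of $\pi M$. This holds because $\pi$ fixes $0$. We must also check that the reordering convention $i_a<j_a$ in Definition \ref{defn:mdef} is harmless; it is, because $R(M)$ is defined only through unordered supports. I expect no serious obstacle beyond these bookkeeping checks, so the proof reduces to citing the two propositions above together with this short equivariance computation.
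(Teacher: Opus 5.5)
Your proposal is correct and follows the paper's proof. Bijectivity comes from Lemma~\ref{lemm:mob_sum}, equivariance holds because each $w\in W'$ permutes $\ten\setminus\{0\}$ and the coordinate indices of the roots in $R(M)$ in the same way, and level preservation comes from Proposition~\ref{prop:mob_level}, with Proposition~\ref{prop:wt} identifying $h$ with $\lambda$. Your checks that $\abs{\pi(r_{ij})}=r_{\pi(i)\pi(j)}$ and that $\pi$ fixes the arc through $0$ simply spell out what the paper means when it says equivariance ``follows readily from the definition of $R(M)$.''
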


\begin{proof}
The map $f$ is a bijection by Lemma \ref{lemm:mob_sum}. The $W'$-equivariance of
$f$ follows readily from the definition of $R(M)$,  because each element $w\in
W'$ permutes the elements of $\ten$ pointwise and permutes the coordinate indices of
the roots in $R(M)$ in the same natural way. Finally, Proposition
\ref{prop:mob_level} proves that $f$ preserves level. 
\end{proof}

\section{Orthogonal bases via Fano labellings}
\label{sec:fob}
This section explains how we can use labellings of the Fano plane to construct
1080 orthogonal bases in $\ob$, compute their Rothe diagrams, and calculate
their levels. These 1080 orthogonal bases form a set $\fob$ that turns out to
be the complement in $\ob$ of the set $\Omega_M$ studied in Section \ref{sec:mob}. The structure
of this section parallels that of Section \ref{sec:mob}, with Section
\ref{sec:fob_def}, Section \ref{sec:fob_res}, and \ref{sec:fob_level} dealing
with the necessary definitions, Rothe diagrams, and levels, respectively. 

\subsection{Definitions}
\label{sec:fob_def}
Recall that the Fano plane is the unique projective plane $PG(2,2)$ over the
field of two elements. It contains 7 points and 7 lines such that each line
contains 3 points, each point lies on 3 lines, every pair of lines intersects in
a unique point, and every pair of points lies on a unique line. Placing a
distinct number on each point results in a labelling of the Fano plane, as
illustrated in Figure \ref{fig:fano}, and we will be interested in labellings of
the Fano plane using elements of the set $\nine$. It is well known that the
automorphism group of the Fano plane is the projective general linear group
$PGL(3,2)$, which has order $168$, so for any set $X$ of size 7, there are
$7!/168=30$ inequivalent labellings using $X$ of the Fano plane up to
automorphisms. Therefore, there is a total of $30\cdot\binom{9}{2}=1080$
inequivalent labellings of the Fano plane using elements from $\nine$.

\begin{figure}[h!]
\centering
\subfloat[$F_C$]
{
\begin{tikzpicture}[
mydot/.style={
  draw,
  circle,
  fill=black,
  inner sep=1.5pt}
]
\draw
  (0,0) coordinate (A) --
  (3,0) coordinate (B) --
  ($ (A)!.5!(B) ! {sin(60)*2} ! 90:(B) $) coordinate (C) -- cycle;
\coordinate (O) at
  (barycentric cs:A=1,B=1,C=1);
\draw (O) circle [radius=3*1.717/6];
\draw (C) -- ($ (A)!.5!(B) $) coordinate (LC); 
\draw (A) -- ($ (B)!.5!(C) $) coordinate (LA); 
\draw (B) -- ($ (C)!.5!(A) $) coordinate (LB); 
\foreach \Nodo in {A,B,C,O,LC,LA,LB}
  \node[mydot] at (\Nodo) {};    
  \node [left=0.1cm of A] {$6$};
  \node [right=0.1cm of B] {$7$};
  \node [above=0.1cm of C] {$3$};
  \node [right=0.1cm of LA] {$5$};
  \node [left=0.1cm of LB] {$4$};
  \node [below=0.1cm of LC] {$8$};
  \node [above right=0.1cm and 0.001cm of O] {$2$};
\end{tikzpicture}
}
\quad\quad\quad\quad
\subfloat[$F_N$]
{
\begin{tikzpicture}[
mydot/.style={
  draw,
  circle,
  fill=black,
  inner sep=1.5pt}
]
\draw
  (0,0) coordinate (A) --
  (3,0) coordinate (B) --
  ($ (A)!.5!(B) ! {sin(60)*2} ! 90:(B) $) coordinate (C) -- cycle;
\coordinate (O) at
  (barycentric cs:A=1,B=1,C=1);
\draw (O) circle [radius=3*1.717/6];
\draw (C) -- ($ (A)!.5!(B) $) coordinate (LC); 
\draw (A) -- ($ (B)!.5!(C) $) coordinate (LA); 
\draw (B) -- ($ (C)!.5!(A) $) coordinate (LB); 
\foreach \Nodo in {A,B,C,O,LC,LA,LB}
  \node[mydot] at (\Nodo) {};    
  \node [left=0.1cm of A] {$8$};
  \node [right=0.1cm of B] {$7$};
  \node [above=0.1cm of C] {$4$};
  \node [right=0.1cm of LA] {$9$};
  \node [left=0.1cm of LB] {$6$};
  \node [below=0.1cm of LC] {$5$};
  \node [above right=0.1cm and 0.001cm of O] {$3$};
\end{tikzpicture}}
       \caption{Two labellings of the Fano plane}
\label{fig:fano}
\end{figure}

\begin{defn}\label{def:fdef}
\begin{enumerate}
  \item Let $\calf$ be the set of all labellings of the Fano plane (up to the
  automorphisms of the Fano plane) using the elements of
  $\nine$. We write each $F\in \calf$ in the form 
  \[
  F = \{t_1, t_2, \dots, t_7\}, 
  \]
  where each element $t_i$ corresponds to an edge in the Fano plane and is of
  the form $\{a_i,b_i,c_i\}$, where $a_i, b_i, c_i$ are the labels of the points
  on that edge. We  call each $t_i$ a {\it block} of $F$, and we say the three
  points $a_i, b_i, c_i$ in each block are \emph{collinear}. 

\item For each $F=\{t_i:1\le i\le 7\}\in \calf$, we call the set 
$L(F) := \bigcup_{i=1}^7 t_i$ the set of {\it labels} of $F$, and we set
$\lc(F)=\nine\setminus L(F)$.
We say $F$ is of \emph{type III} if $9\in \lc(F)$ and
  of \emph{type IV} otherwise.

\item  For any distinct labels $a,b\in L(F)$, we define $\tau(a,b)=\tau_F(a,b)$ to be the unique
  label of $F$ that is collinear with $a$ and $b$ in $F$.    
\end{enumerate}
\end{defn}

\begin{rmk}
\label{rmk:sol}
For any $F\in \calf$ and $i,j\in L(F)$, the definition of $\tau_F$ implies that $\tau(i,j)$ is the unique solution to the equation $\tau(x,i)=j$ for $x$.
\end{rmk}

The 1080 labellings in $\calf$ give rise to $1080$ elements of $\ob$ as
follows.

\begin{defn}
\label{defn:fob_construction}
For each labelling $F=\{t_i=\{a_i,b_i,c_i\}: 1\le i\le 7\}\in \calf$, we
  define 
  \[  
R(F) = \{\rr_{xy}, \ \rr_{a_1b_1c_1}, \ \rr_{a_2b_2c_2}, \ \dots, \
\rr_{a_7b_7c_7}\},
  \]
  where $\{x,y\}=\lc(F)$.
We define \[ \fob=\{R(F):F\in \calf\}. \]
\end{defn}

\begin{lemma}
\label{lemm:fob_sum}
For every labelling 
$F=\{t_i=\{a_i,b_i,c_i\}: 1\le i\le 7\}\in \calf$,
we have $R(F)\in\ob$ and
\begin{equation}
  \label{eq:fob_sum}
  \sigma(R(F)) = 4e_9 + 2e_x,
\end{equation}
where $x=\min(\lc(F))$ is the smaller of the two elements in $\lc(F)$.
The map $f: \calf\ra \fob, F\mapsto R(F)$ is bijective, so that
$\abs{\fob}=1080$.
\end{lemma}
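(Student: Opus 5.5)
The proof follows the pattern of Lemma \ref{lemm:mob_sum}: we check orthogonality using Remark \ref{rmk:orthog}, establish injectivity, and compute the sum directly.

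\emph{Orthogonality.} Write $\lc(F)=\{x,y\}$ with $x<y$. The eight roots of $R(F)$ are orthogonal for the following reasons.
\begin{itemize}
\item For two distinct blocks $t_i,t_j$, the Fano axioms say they meet in exactly one point. By Equation \eqref{eq:overlap}, $B(p_{t_i},p_{t_j})=1-1=0$. Changing the sign of either root (passing to $\abs{\cdot}$) does not affect orthogonality.
\item The root $r_{xy}$ has support $\{x,y\}$, which is disjoint from every block since blocks only use labels in $L(F)$. Remark \ref{rmk:orthog} then gives $B(r_{xy},r_{t_i})=0$.
\item The roots are distinct because their supports are distinct.
\end{itemize}
So $R(F)$ consists of 8 pairwise orthogonal positive roots, and hence $R(F)\in\ob$.

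\emph{Bijectivity.} Surjectivity of $f$ holds by the definition of $\fob$. For injectivity, note that $R(F)$ determines the set of supports of its roots of the form $r_{abc}$. This is exactly the set of blocks of $F$, which determines the labelling up to automorphism, i.e. determines $F$ as an element of $\calf$. Therefore $\abs{\fob}=\abs{\calf}=1080$.

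\emph{The sum.} Each label in $L(F)$ lies on exactly three blocks. We split into the two types.

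\emph{Type III} ($y=9$). Every block avoids 9, so $r_{t_i}=m_{t_i}$ and $r_{x9}=m_{x9}=e_9-e_x$. Hence
\[
\sigma=e_9-e_x-3\sum_{\ell\in L(F)}e_\ell .
\]
Using $\sum_{\ell\in L(F)}e_\ell=-e_x-e_9$, this equals $4e_9+2e_x$.

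\emph{Type IV} ($9\in L(F)$, $x<y\le 8$). Here $r_{xy}=p_{xy}=e_x-e_y$. The three blocks through 9 contribute $p_{t}$, and the other four blocks contribute $m_{t}$. The blocks through 9 cover each label of $L(F)\setminus\{9\}$ exactly once and cover $9$ three times. So their total is
\[
3e_9+\sum_{\ell\in L(F)\setminus\{9\}}e_\ell .
\]
The four blocks avoiding 9 cover each label of $L(F)\setminus\{9\}$ exactly twice, because each such label lies on three blocks, one of which passes through 9. They contribute
\[
-2\sum_{\ell\in L(F)\setminus\{9\}}e_\ell .
\]
Adding everything gives
\[
\sigma=e_x-e_y+3e_9-\sum_{\ell\in L(F)\setminus\{9\}}e_\ell .
\]
With $\sum_{\ell\in L(F)\setminus\{9\}}e_\ell=-e_x-e_y-e_9$, this becomes $4e_9+2e_x$.

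The only step needing care is this type IV bookkeeping: counting how often each label is covered by the blocks through 9 versus the blocks avoiding 9. Everything else is routine.
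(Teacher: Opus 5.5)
Your proof is correct and follows essentially the same route as the paper: orthogonality from Remark~\ref{rmk:orthog} together with the Fano incidence axioms, injectivity by reading off the blocks from the three-index roots, and a direct type III/IV computation of the sum using $\sum_{i=1}^9 e_i=0$. Your type IV bookkeeping is in fact more careful than the paper's: its displayed intermediate expression drops the $-e_y$ contributed by $p_{xy}$, and should read $3e_9+e_x-\sum_{i\in\nine\setminus\{x,9\}}e_i$, which agrees with yours.
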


\begin{proof}
It follows from Definition \ref{defn:fob_construction} and Remark
\ref{rmk:orthog} that $R(F)$ consists of 8 distinct and pairwise orthogonal
positive roots, so we have $R(F)\in \ob$. The surjectivity of the map $f$ then
follows from the definition of $\fob$, and $f$ is injective because the roots
$r_{a_ib_ic_i}\in R(F)$ allow us to recover $F$. It follows that
$\abs{\fob}=\abs{\calf}=1080.$

It remains to prove Equation \eqref{eq:fob_sum}. Suppose $\lc(F)=\{x,y\}$ and
$x<y$. If $F$ is of type III, then $y=9$, and direct computation shows that

\[
\sigma(R(F))=e_9-e_x-3\sum_{i\in L(F)}e_i,\] 
which is equal to $4e_9+2e_x$ by the relation $\sum_{i=1}^9e _i = 0$. If $F$ is of type
IV, then $R(F)$ consists of $p_{xy}$, three elements of the form $p_{ij9}$
corresponding to the three blocks of $F$ that contain the label $9$, and four
elements of the form $m_{ijk}$ for the blocks of $F$ that do not contain 9.
The standard forms of these elements sum to
\[
  3e_9+e_x-\sum_{i\in L(F)\setminus\{x,9\}}e_i,
  \]
which is equal to $4e_9+2e_x$ by the relation $\sum_{i=1}^9e _i = 0$. This
completes the proof.
\end{proof}

\begin{exa}
\label{exa:fob_exa}
The labelling 
\[
  F_C=\{\{2,3,8\},\{2,4,7\},\{2,5,6\},\{3,4,6\},\{3,5,7\}, \{4,5,8\},
\{6,7,8\}\}
\] 
shown in Figure \ref{fig:fano} (a) is of type III, and it
gives rise to the orthogonal basis 
\[
\{m_{19}, m_{238}, m_{247}, m_{256}, m_{346}, m_{357}, m_{458}, m_{678}\},
\]
which is precisely the element $\theta_C$ from Remark \ref{rmk:theta_C} (iii).
The labelling $F_N$ shown in Figure \ref{fig:fano} (b) gives rise to the orthogonal
basis
\[
\{p_{12}, p_{389}, p_{479}, p_{569}, m_{345}, m_{367}, m_{468}, m_{578}\},
\]
which is precisely the unique maximal element $\theta_N$ of $\ob$ given in Example
\ref{exa:minmax}. 
\end{exa}

\begin{cor}\label{cor:all}
The sets $\mob$ and $\fob$ partition $\ob$, and each of them is a union of
$\sigma$-classes.
\end{cor}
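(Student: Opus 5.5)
The plan is to use the sum formulas of Lemma \ref{lemm:mob_sum} and Lemma \ref{lemm:fob_sum} to separate $\mob$ from $\fob$, and then to finish by counting, using $\abs{\mob}+\abs{\fob}=945+1080=2025=\abs{\ob}$.

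\textbf{Step 1: disjoint sums.} We show that no sum of an element of $\mob$ equals a sum of an element of $\fob$. By Lemma \ref{lemm:mob_sum}, every element of $\mob$ has sum $4e_9+2(e_{i_1}+e_{i_2}+e_{i_3}+e_{i_4})$. Here $i_1,\dots,i_4$ are the four left endpoints of the arcs other than $(0,z)$. These are four distinct elements of $\{1,\dots,8\}$: none can be $0$, since the arc $(0,z)$ is removed, and none can be $9$, since $9$ is never a left endpoint. By Lemma \ref{lemm:fob_sum}, every element of $\fob$ has sum $4e_9+2e_x$ with $x\le 8$.

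Suppose the two sums agree. Then
\[
e_{i_1}+e_{i_2}+e_{i_3}+e_{i_4}=e_x .
\]
The only linear relation among $e_1,\dots,e_9$ is $\sum_{i=1}^9 e_i=0$, because they span the 8-dimensional space $V$. Hence the coefficient vectors $(\text{indicator of }\{i_1,\dots,i_4\})-(\text{indicator of }\{x\})$ would have to be a scalar multiple of the all-ones vector $\bfj$. This is impossible: the coefficient of $e_9$ is $0$, which forces the scalar to be $0$, yet the vector is nonzero.

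So $\sigma(\mob)\cap\sigma(\fob)=\emptyset$. In particular $\mob\cap\fob=\emptyset$, and no $\sigma$-class meets both sets.

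\textbf{Step 2: counting.} Since $\mob,\fob\subseteq\ob$ are disjoint, Lemmas \ref{lemm:mob_sum} and \ref{lemm:fob_sum} give
\[
\abs{\mob\cup\fob}=945+1080=2025=\abs{\ob},
\]
using the count from Equation \eqref{eq:poincare}. Therefore $\ob=\mob\sqcup\fob$.

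\textbf{Step 3: unions of $\sigma$-classes.} Take any $\sigma$-class $\sigma\inverse(\gamma)$ meeting $\mob$. Every element of this class lies in $\mob$ or in $\fob$, and all of them have sum $\gamma\in\sigma(\mob)$. By Step 1, none of them can lie in $\fob$, so the whole class lies in $\mob$. The same argument applies to $\fob$.

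\textbf{Main concern.} The only delicate point is relying on $\abs{\ob}=2025$. The paper obtained this by computer, and it aims to prove Equation \eqref{eq:poincare} without one, so using it may be circular. If that count is to be avoided, I would instead prove $\ob\subseteq\mob\cup\fob$ directly. For $R\in\ob$, consider the roots of the form $r_{ij}$ in $R$. Their supports are pairwise disjoint. The supports of the $r_{ijk}$ meet pairwise in singletons, and each $r_{ij}$ meets each triple in $0$ or $2$ points.

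A case analysis on the number of $r_{ij}$ in $R$ should then show one of two outcomes:
\begin{itemize}
\item One $r_{ij}$, together with seven triples meeting pairwise in one point, which forces a Fano plane on the complementary seven labels; this gives $\fob$.
\item Four pairs with triples $\{i_a,j_a,z\}$; this gives $\mob$.
\end{itemize}
I expect ruling out the other possible counts of pairs to be the main obstacle in this alternative route.
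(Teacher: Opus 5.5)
Your proof is correct and is essentially the paper's: Lemmas \ref{lemm:mob_sum} and \ref{lemm:fob_sum} give sums of incompatible shapes, and the count $945+1080=2025=\abs{\ob}$ then forces the partition. Your remark that the only linear relation among the $e_i$ is $\sum_i e_i=0$ simply makes explicit the paper's ``two such sums cannot be equal''. The paper uses $\abs{\ob}=2025$ in exactly the same way, so the fallback case analysis you sketch is not needed. Your Step 3 also states the union-of-$\sigma$-classes conclusion more carefully than the paper, which draws it before establishing $\ob=\mob\cup\fob$.
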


\begin{proof}
  Lemmas \ref{lemm:mob_sum} and \ref{lemm:fob_sum} imply that
  $\abs{\mob}+\abs{\fob}=945+1080=2025=\abs{\ob}$. The same lemmas also imply
  each element of $\mob$ has a sum of the form $4e_9+2(e_a+e_b+e_c+e_d)$ for
  some distinct elements $a,b,c,d\in \{1,2,\dots,8\}$ by Lemma
  \ref{lemm:mob_sum}, while each element of $\fob$ has a sum of the form
  $4e_9+2e_x$ for some $1\le x\le 8$ by Lemma \ref{lemm:fob_sum}. Two such sums
  cannot be equal, so $\mob$ and $\fob$ are disjoint and each of them is a union
  of $\sigma$-classes. Since $\abs{\mob}+\abs{\fob}=\abs{\ob}$, it follows that
$\mob$ and $\fob$ partition $\ob$. \end{proof}

The next definition introduces a height function on $\calf$ that is analogous to
the height function on $\calm$ from Definition \ref{def:matching_features}. As
in the case of matchings and their associated orthogonal basis, this height
function will enable us to compute the level of $R(F)$ directly in terms of $F$,
without considering roots.

\begin{defn}
  \label{def:fob_height}
Let $F\in \calf$.
\begin{enumerate}
  \item We define 
    \[
      D(F)=\{\{i,j,k\}\se L(F): \tau(i,j)<k, \tau(i,k)<j, \tau(j,k)<i\}
    \]
    and call each triple in $D(F)$ an \emph{inversion} of $F$. 

  \item We define the \emph{height} of $F$ to be the number $h(F)=\abs{D(F)}$ of
    inversions of $F$.

\item For each $z\in L(F)$, if the blocks of $F$ containing $z$ are given
  by $\{z,i_1,j_1\}, \{z,i_2,j_2\}, \{z,i_3,j_3\}$ where $i_a<j_a$ for all $a\in
  \{1,2,3\}$ and $\{i_1,i_2,i_3\}$ is a block of $F$, then we call the
  triple $\{j_1,j_2,j_3\}$ a \emph{$z$-inversion of $F$}.
\end{enumerate}
\end{defn} 

The following result shows that each $z$-inversion for a label $z\in L(F)$ is an
inversion of $F$, and that every inversion of $F$ arises this way. Thus, we may compute the inversions of
$F$ by simply checking whether each of the seven labels in $F$ induces an
inversion, rather than going through all the triples $t=\{i,j,k\}\subset L(F)$
and checking whether $t$ satisfies the conditions $\tau(i,j)<k, \tau(i,k)<j$,
and $\tau(j,k)<i$.

\begin{lemma}\label{lem:fano_inv}
Let $F \in \calf$ and let $a,b,c\in L(F)$ be three distinct labels of $F$.
\begin{enumerate}
\item{If $t$ is an inversion of $F$, then $t$ is not a block in $F$.}

\item If $t$ is not a block in $F$, then there exists a unique element
    $z\in L(F)$ such that
    \[
    L(F) = \{z, \ a, \ b, \ c, \ \tau(a, b), \ \tau(b, c), \ \tau(a, c)\}.
  \] 
  In this case, we have $\tau(a,z)=\tau(b,c), \tau(b,z)=\tau(a,c)$ and
  $\tau(c,z)=\tau(a,b)$, and the triple $\{\tau(a,b),\tau(b,c),\tau(a,c)\}$ is a
  block in $F$.

\item If $t$ is not a block in $F$ and $z$ is as in (ii), then $t$ is an
  inversion of $F$ if and only if $t$ is a $z$-inversion of $F$.

\item We have $h(F)\le 7$.
\end{enumerate}
\end{lemma}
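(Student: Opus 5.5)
We prove the four parts in order, using only the incidence structure of the Fano plane together with the definition of $\tau=\tau_F$.

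For (i), suppose $t=\{i,j,k\}$ is a block of $F$. Then $\tau(i,j)=k$, so the condition $\tau(i,j)<k$ fails. Hence $t$ is not an inversion.

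For (ii), assume $t=\{a,b,c\}$ is not a block. Then $a,b,c$ are non-collinear, so the three lines $ab$, $bc$, $ac$ are distinct. Their third points $\tau(a,b),\tau(b,c),\tau(a,c)$ are then distinct from each other and from $a,b,c$; for instance, $\tau(a,b)=\tau(b,c)$ would force the lines $ab$ and $bc$ to share two points. This accounts for six of the seven labels, and $z$ is defined as the remaining one, which is clearly unique.
\begin{itemize}
\item The points $\tau(a,b),\tau(b,c),\tau(a,c)$ are collinear. This is the standard fact that in $PG(2,2)$ the three "diagonal points" of a triangle lie on a line. Concretely, with coordinates $a=e_1$, $b=e_2$, $c=e_3$, these points are $e_1+e_2$, $e_2+e_3$, $e_1+e_3$, whose sum is $0$.
\item Then $z=e_1+e_2+e_3$, and $z+a=e_2+e_3=\tau(b,c)$, so $\tau(a,z)=\tau(b,c)$. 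The other two identities follow symmetrically.
\end{itemize}
I would present this either in coordinates, justified by the transitivity of $PGL(3,2)$ on triangles, or by a short counting argument on lines through $z$.

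For (iii), the blocks through $z$ are $\{z,a,\tau(b,c)\}$, $\{z,b,\tau(a,c)\}$ and $\{z,c,\tau(a,b)\}$ by (ii). The $z$-inversion is defined with respect to the smaller elements of these pairs, which must form a block. The possible blocks among the six non-$z$ labels avoiding collinearity with $z$ are $\{\tau(a,b),\tau(b,c),\tau(a,c)\}$, $\{a,b,\tau(a,b)\}$, $\{b,c,\tau(b,c)\}$ and $\{a,c,\tau(a,c)\}$, but only choices taking one element from each pair matter. The choices taking exactly one of $a,b,c$ are of the form $\{a,\tau(a,c),\tau(a,b)\}$, which is not a line, since the line through $a$ and $\tau(a,b)$ is $\{a,b,\tau(a,b)\}$. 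Taking all three gives $t$, which is not a block. Hence the block condition forces the smaller elements to be exactly $\{\tau(a,b),\tau(b,c),\tau(a,c)\}$, so the $z$-inversion is $\{a,b,c\}$ precisely when $\tau(b,c)<a$, $\tau(a,c)<b$ and $\tau(a,b)<c$. That is exactly the inversion condition, so $t$ is an inversion if and only if it is a $z$-inversion.

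For (iv), by (i) and (iii) every inversion is the $z$-inversion for the $z$ determined in (ii). Each label $z$ has at most one $z$-inversion, since that triple is determined by the blocks through $z$. So the map from inversions to labels is injective, giving $h(F)\le |L(F)|=7$.

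The main obstacle is the block bookkeeping in (iii), namely checking that the only block formed by one element from each of the three pairs $\{a,\tau(b,c)\}$, $\{b,\tau(a,c)\}$, $\{c,\tau(a,b)\}$ is the diagonal line or $t$ itself. I would settle this cleanly in the coordinates from (ii), where every case reduces to checking a single sum of vectors.
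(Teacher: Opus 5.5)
Parts (i), (ii) and (iv) are fine. For (ii) you work in coordinates in $\mathbb{F}_2^3$, whereas the paper runs an elimination argument using only the uniqueness property in Remark~\ref{rmk:sol}. Your route is valid and shorter, and it does not even need transitivity of $PGL(3,2)$, since three non-collinear points of $PG(2,2)$ are automatically linearly independent. The paper's argument has the advantage of staying synthetic.

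The problem is in (iii). You claim that the only block taking one element from each of the pairs $\{a,\tau(b,c)\}$, $\{b,\tau(a,c)\}$, $\{c,\tau(a,b)\}$ is $\{\tau(a,b),\tau(b,c),\tau(a,c)\}$. This is false. Your case analysis skips the transversals containing exactly two of $a,b,c$, and these are $\{a,b,\tau(a,b)\}$, $\{a,c,\tau(a,c)\}$ and $\{b,c,\tau(b,c)\}$, which are the very blocks you had just listed. More generally, every line missing $z$ meets each of the three lines through $z$ in a point other than $z$, so all four lines missing $z$ are transversals; in your coordinates $\{e_1,e_2,e_1+e_2\}$ sums to $0$.

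Here is a concrete instance. Take $a=1$, $b=2$, $\tau(a,b)=3$, $c=4$, $\tau(b,c)=5$, $\tau(a,c)=6$, $z=7$. The pairs through $z$ are $\{1,5\},\{2,6\},\{3,4\}$, and their smaller elements form the block $\{1,2,3\}=\{a,b,\tau(a,b)\}$. So a $z$-inversion exists, but it is $\{4,5,6\}\neq t$. Hence the sentence ``the block condition forces the smaller elements to be exactly $\{\tau(a,b),\tau(b,c),\tau(a,c)\}$'' is wrong. Read literally, it would make $t$ the $z$-inversion whenever one exists, which contradicts uniqueness, since four different non-block triples share the same $z$. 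The ``main obstacle'' you flag at the end is therefore both misstated and unnecessary.

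The fix is to argue the two directions directly, as the paper does:
\begin{enumerate}
\item If $t$ is an inversion, then the smaller elements of the three pairs are exactly $\tau(b,c),\tau(a,c),\tau(a,b)$. These form a block by (ii), so the $z$-inversion is defined and equals $t$.
\item Conversely, if $t$ is the $z$-inversion, then $t$ is the set of larger elements. Since $a,b,c$ lie in different pairs, each is the larger element of its own pair. This gives $\tau(b,c)<a$, $\tau(a,c)<b$, $\tau(a,b)<c$, which is the inversion condition.
\end{enumerate}
No classification of transversal blocks is needed, and with (iii) repaired this way your argument for (iv) goes through unchanged.
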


\begin{proof}  
  If $t$ is a block, then $\tau(a,b)=c$, so $t$ cannot be an
  inversion. This proves (i). 

  Suppose that $t$ is not a block in $F$. The conclusions of (ii) can be deduced
  by repeated applications of Remark \ref{rmk:sol}, which we will use without
  further comment below. First, note that $\tau(a,b)$ cannot be $a$ or $b$, and
  we have $\tau(a,b)\neq c$ since $t$ is not a block, so
  $\tau(a,b)\notin\{a,b,c\}$. We also have $\tau(a,b)\neq \tau(a,c)$, because
  $b\neq c$. It then follows by symmetry that the set
  $Z:=\{a,b,c,\tau(a,b),\tau(a,c), \tau(b,c)\}$ has six distinct elements, which
  establishes the existence of the element $z$. 

  We cannot have $\tau(a,z)=b$ or
  $\tau(a,z)=\tau(a,b)$, because in these cases we would have $z=\tau(a,b)$ or
  $z=b$, respectively. Similarly, we cannot have $\tau(a,z)=c$ or
  $\tau(a,z)=\tau(a,c)$, so we have $\tau(a,z)=\tau(b,c)$. By symmetry, we also
  have $\tau(b,z)=\tau(a,c)$ and $\tau(c,z)=\tau(a,b)$. 

  The fact that $Z$ consists of six distinct elements implies that
  $\tau(\tau(a,b),\tau(a,c))\notin \{a,b,c\}$, and if we had $\tau(\tau(a,b),
  \tau(a,c))=z$ then we would have $\tau(\tau(a,b),z)=\tau(a,c)$, which cannot
  happen because $\tau(b,z)=\tau(a,c)$. It follows that
  $\tau(\tau(a,b),\tau(a,c))=\tau(b,c)$, which completes the proof of (ii).
 
  Suppose $t$ is not a block, and let $\{z,i_1,j_1\}, \{z,i_2,j_2\},$ and
  $\{z,i_3,j_3\}$ be the three blocks of $F$ containing $z$, with $i_a<j_a$ for
  all $a\in \{1,2,3\}$. By (ii), the three blocks of $F$ that contain $z$ are
  $\{z,a,\tau(b,c)\}, \{z,b,\tau(a,c)\}$ and $\{z,c,\tau(a,b)\}$, and the triple
  $t'=\{\tau(b,c),\tau(a,c),\tau(a,b)\}$ is a block in $F$. If $t$ is an
  inversion of $F$, then by Definition \ref{def:fob_height} (i) we have
  $\tau(a,z)=\tau(b,c)<a, \tau(b,z)=\tau(a,c)<b,$ and $\tau(c,z)=\tau(a,b)<c$.
  It follows that $\{i_1,i_2,i_3\}=t'$ and $\{j_1,j_2,j_3\}=t$, so that $t$ is a
  $z$-inversion. Conversely, if $t$ is a $z$-inversion, then by Definition
  \ref{def:fob_height} (iii) we have $t=\{j_1,j_2,j_3\}$ and
  $t'=\{i_1,i_2,i_3\}$, so it follows that $\tau(b,c)<a, \tau(a,c)<b$, and
  $\tau(a,b)<c$ and $t$ is an inversion of $F$. This completes the proof of
  (iii).

  Every inversion of $F$ is a $z$-inversion for some label $z\in L(F)$ by (i)
and (iii), and $z$ induces at most one $z$-inversion by Definition
\ref{def:fob_height} (iii), so $F$ has at most 7 inversions. This proves (iv).
\end{proof}

\subsection{Rothe diagrams}
\label{sec:fob_res}
We compute the Rothe diagram $R(F)$ of each $F\in \calf$ in this section. As in
Section \ref{sec:mob_res}, we write $\al(\be)$ for $s_\al(\be)$ for all
$\al,\be\in \Phi$ throughout this section. 

\begin{theorem}
\label{thm:fob_res}
Let $F\in \calf$, and suppose that $\lc(F)=\{x,y\}$, where $x<y$. Let $R=R(F)$.
\begin{enumerate}
\item If $F$ is of type III, then we have 
$\res(R(F)) = C_1 \ \cup\ C_2 \ \cup\ C_3$, where 
\begin{align*}
C_1 &= \{\mr_{i9} : x < i < 9\},\\
C_2 &= \{\pr_{ix9} : 1\le i\le 8, i\neq x\},
\text{and}\\ 
C_3 &= \{\mr_{ijk} : i, j, k \in L(F), \ \tau(i, j)<k, \ \tau(i, k)<j\ 
\text{and\ } \tau(j, k)<i\}.
\end{align*}
\item If $F$ is of type IV, then we have $\res(R(F)) = C_1 \ \cup\ C_2
  \ \cup\ C_2'\ \cup\ C_2''\ \cup\ C_3$, where 
  \begin{align*}
C_1 &= \{\mr_{i9} : 1 \leq i \le 8\},\\
C_2 &= \{p_{xy9}\}, \\
C_2' & = \{p_{iz9}: i\in L(F), z\in \lc(F), \tau(i,9)>z\},\\
C_2'' &=\{
  \pr_{ij9} : 1 \leq i < j \leq 8, i,j\in L(F), \ \tau(j, 9)>i, \text{and } \tau(i,
9)>j\}, \text{and}\\ 
C_3 &= \{\mr_{ijk} : 1\le i,j,k\le 8,\ i,j,k\in L(F), \ \tau(i, j)<k, \
  \tau(i, k)<j\ 
\text{and\ } \tau(j, k)<i\}.
\end{align*}
\end{enumerate}
\end{theorem}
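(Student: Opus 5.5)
The proof will follow the template of Theorem \ref{thm:mob_res}. By Lemma \ref{lem:res_prep} (i), a positive root $\al$ lies in $\res(R)$ if and only if $\al\notin R$ and $\be(\al)>0$ for each of the four roots $\be\in\pra$. So the plan is to run through the four standard forms of positive roots from \eqref{eq:pos}: $p_{ij}$, $m_{i9}$, $p_{ij9}$ and $m_{ijk}$. For each form we first find $\pra$ using Remark \ref{rmk:orthog}. We then either exhibit a single $\be\in\pra$ that topples $\al$, or list all four values $\be(\al)$ and decide their signs. Reflections in roots $r_{xy}$ act as transpositions (Remark \ref{rmk:S9}), and reflections in roots $r_{abc}$ are computed with Lemma \ref{lem:res_prep} (ii)--(v). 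The output will be a table for each type, as in Tables \ref{tab:mob1}--\ref{tab:mob2}.

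\textbf{Type III}, $\lc(F)=\{x,9\}$, so $R=\{m_{x9}\}\cup\{m_{abc}\}$ with the seven blocks.
\begin{itemize}
\item $\al=p_{ij}$. Either one of $i,j$ equals $x$, or $\{i,j\}\subseteq L(F)$. In the second case, take the block $\{i,j,\tau(i,j)\}$; by Lemma \ref{lem:res_prep} (iv) it sends $p_{ij}$ to a root of the form $\pm r_{k\tau}$, and we expect a negative value from some block. In the first case we use a block through the other index.
\item $\al=m_{i9}$. The relevant roots are $m_{x9}$, acting as the transposition $(x9)$ and giving $m_{ix}$, positive iff $x<i$, together with the three blocks through $i$, which give positive roots of the form $m_{\cdot\cdot\cdot}$. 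This yields $C_1$.
\item $\al=p_{ij9}$. If $x\in\{i,j\}$, all four images should be positive, giving $C_2$. Otherwise we expect one of the blocks to topple $\al$.
\item $\al=m_{ijk}$. If $x\in\{i,j,k\}$, a block disjoint from the support topples $\al$, by Lemma \ref{lem:res_prep} (v). If $\{i,j,k\}\subseteq L(F)$ and is not a block, then by Lemma \ref{lem:fano_inv} (ii) the blocks meeting $\{i,j,k\}$ in two points are $\{i,j,\tau(i,j)\}$ and its analogues, plus the block $\{\tau(i,j),\tau(i,k),\tau(j,k)\}$. Lemma \ref{lem:res_prep} (iv) gives images $p_{k\tau(i,j)}$ etc. These are all positive exactly when $\tau(i,j)<k$, $\tau(i,k)<j$ and $\tau(j,k)<i$, with the remaining image automatically positive. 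This gives $C_3$.
\end{itemize}

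\textbf{Type IV}, $\lc(F)=\{x,y\}\subseteq\{1,\dots,8\}$, with $9\in L(F)$. Now $R$ contains $p_{xy}$, three roots $p_{ab9}$ from the blocks through $9$, and four roots $m_{abc}$. We repeat the case analysis, now also splitting according to whether indices lie in $\lc(F)$, in $L(F)\setminus\{9\}$, or equal $\tau(i,9)$.
\begin{itemize}
\item $\al=m_{i9}$. Every relevant reflection should give a positive image, which yields $C_1$.
\item $\al=p_{ij9}$. There are three subcases: $\{i,j\}=\{x,y\}$ gives $C_2$; exactly one of $i,j$ in $\lc(F)$ gives $C_2'$; and $i,j\in L(F)$ gives $C_2''$. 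In the last two, the block through $9$ and $i$ contributes an image like $p_{j\tau(i,9)}$, whose sign yields the inequality $\tau(i,9)>j$, as in the third row of Table \ref{tab:mob1}.
\item $\al=m_{ijk}$. If $9$ or an element of $\lc(F)$ lies in the support, we exhibit a toppling root. Otherwise the Fano argument above applies verbatim, giving $C_3$.
\end{itemize}

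The main obstacle is the bookkeeping in the $p_{ij}$ and $m_{ijk}$ cases whose support is not contained in the labels, or is a block. There we must find an explicit toppling element of $\pra$, and the signs coming from Lemma \ref{lem:res_prep} (v) depend on where $9$ sits. To make this uniform, I would first handle $\al$ whose support is a block, or meets $\lc(F)$, by choosing a block disjoint from the support, or meeting it in one point as appropriate, and reading the sign from Lemma \ref{lem:res_prep} (v). I would then reserve the Fano structure (Lemma \ref{lem:fano_inv}) for the generic non-block triples.
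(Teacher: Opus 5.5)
Your framework is the paper's (Tables~\ref{tab:fob1} and~\ref{tab:fob2}): run through the standard forms of $\alpha$, determine $P_R(\alpha)$, and either exhibit a toppler or check all four images. However, the proof of this theorem consists of nothing but that case analysis, and several of the cases you commit to would fail as written.

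\textbf{Type III.} The missing idea is that $m_{x9}$, which acts as the transposition $(x\,9)$, is the toppler in three cases. It sends $p_{ix}$ with $i<x$ to $p_{i9}<0$. It sends $p_{ij9}$ with $x\notin\{i,j\}$ to $p_{ijx}<0$. It sends $m_{ijk}$ with $x\in\{i,j,k\}$ to $m_{ab9}<0$, where $\{a,b,x\}=\{i,j,k\}$. You use $m_{x9}$ only for $m_{i9}$, and the blocks you propose instead do not topple. For $p_{ix}$, a block through $i$ gives $m_{xj'k'}>0$. For $p_{ij9}$, no block topples: $m_{ij\tau(i,j)}$ gives $m_{\tau(i,j)9}>0$, and the two blocks disjoint from $\{i,j\}$ give positive images by the first case of Lemma~\ref{lem:res_prep}~(v). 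For $m_{ijk}$ with $x$ in the support, no type III block contains $9$, so the third case of Lemma~\ref{lem:res_prep}~(v) makes the image under a disjoint block positive.

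Also, for $p_{ij}$ with $i,j\in L(F)$, the block $\{i,j,\tau(i,j)\}$ meets $\{i,j\}$ in two points. Its root is therefore orthogonal to $p_{ij}$, lies outside $P_R(\alpha)$, and fixes $\alpha$; Lemma~\ref{lem:res_prep}~(iv) concerns two $3$-roots and does not apply here. You need a block $\{i',j,k'\}$ through $j$ avoiding $i$, which gives $p_{ii'k'}<0$ by Lemma~\ref{lem:res_prep}~(ii).

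\textbf{Type IV.} You never treat $\alpha=p_{ij}$. If $j\in L(F)$, a $9$-excluding block through $j$ avoiding $i$ exists and gives $p_{ii'k'}<0$. If $j\notin L(F)$, then $i\in L(F)$, and $p_{i\tau(i,9)9}$ gives $m_{j\tau(i,9)9}<0$.

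For $m_{ijk}$ meeting $L^c(F)$ you name no toppler. A disjoint block works only if it passes through $9$ (second case of Lemma~\ref{lem:res_prep}~(v)). Such a block exists because at most two of the three blocks through $9$ meet the support.

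Your claim that ``the Fano argument applies verbatim'' for $\{i,j,k\}\subseteq L(F)$ is false when two of $i,j,k$ are collinear with $9$, say $\tau(i,j)=9$. Then $p_{ij9}\in P_R(\alpha)$ sends $m_{ijk}$ to $m_{k9}>0$, which is positive regardless of the inequality $\tau(i,j)<k$. Meanwhile the fourth root $\gamma=r_{9\tau(i,k)\tau(j,k)}$ now contains $9$ and topples $\alpha$. The conclusion still agrees with $C_3$ only because $\tau(i,j)=9>k$ violates the defining inequality; the paper isolates this situation as its Case~(1).

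Likewise, for $C_2''$ you need Lemma~\ref{lem:fano_inv}~(ii) to identify the fourth root $m_{\tau(i,9)\tau(j,9)\tau(i,j)}$ of $P_R(p_{ij9})$ and to check that its image is positive.

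\textbf{Minor slips.} The root $r_{ij\tau(i,j)}$ sends $m_{ijk}$ to $p_{\tau(i,j)k}$, not $p_{k\tau(i,j)}$; your stated inequalities are the correct ones for the former. Blocks through $i$ send $m_{i9}$ to roots $p_{jk9}$, not to roots of the form $m_{abc}$.
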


\begin{proof}
We can follow the same strategy as in the proof of Theorem \ref{thm:mob_res} and
compute the set $\res(R)$ using tables  \ref{tab:fob1} and \ref{tab:fob2}, where
we assume that $\al\notin R$ and analyse roots of the form $\be(\al)$ where
$\be\in \pra:=\{\be\in R: B(\al,\be)\neq 0\}$. Table \ref{tab:fob1} deals with
the labellings of type III, in which case we have $y=9$ and $R=\{m_{x9}\}\cup
\{m_{a_ib_ic_i}:1\le i\le 7\}$, where the triples $\{a_i,b_i,c_i\}$ are the
blocks of $F$. Table \ref{tab:fob2} (shown Appendix \ref{sec:wide_tables})
treats type IV, in which case $R$ consists of $p_{xy}$, three elements of the
form $p_{ij9}$ corresponding to the blocks of $F$ containing 9, and four
elements of the form $m_{ijk}$ corresponding to the remaining blocks of $F$. 
The tables should be read with the next three paragraphs in mind.

\begin{table}[h!]
  \centering
\begin{tabular}{@{}lll@{}}
\toprule
  $\al$ & $\be\in \pra$ & $\be(\al)$ \\
  \midrule
  $p_{ij}, j=x$ & $m_{x9}$  & $p_{i9}<0$ \\
  $p_{ij}, j\neq x$ & $m_{i'jk'}$, where $\{i',j,k'\}$ is any
  $i$-excluding block & $p_{ii'k}<0$ \\
  \midrule
  $m_{i9}$ (so $i\in L(F)$) & $m_{x9}$, or $m_{ijk}$ for the blocks of the form
  $\{i,j,k\}$  & $m_{ix}=p_{xi}$, or $p_{jk9}>0$ \\
  \midrule
  $p_{ij9}, x\notin \{i,j\}$ & $m_{x9}$ & $p_{ijx}<0$\\
  $p_{ix9}, i\notin \{x,9\}$ & $m_{i'jk}$ for the $i$-excluding blocks
  $\{i',j,k\}$ & $m_{i'jk}(p_{ix9})>0$\\
  \midrule
  $m_{ijk}, x\in \{i,j,k\}$ & $m_{x9}$ & $m_{ab9}<0$, where
  $\{a,b,x\}=\{i,j,k\}$\\
  $m_{ijk}, x\notin \{i,j,k\}$ & $\gamma:=m_{\tau(i,j)\tau(i,k)\tau(j,k)}, m_{ij\tau(i,j)},m_{ik\tau(i,k)}, m_{jk\tau(j,k)}
  $ &
  $\gamma(m_{ijk})>0$, $p_{\tau(i,j)k},p_{\tau(i,k)j}, p_{\tau(j,k)i}$\\
  \bottomrule
\end{tabular}
\caption{Analysis of $\be(\al)$, III}
\label{tab:fob1}
\end{table}

In the seventh row of Table \ref{tab:fob2}, the element $z'$ stands for the
unique element such that $\{z,z'\}=\{x,y\}$. The last three elements listed in
the second column of this row correspond to the three blocks containing
$\tau(i,9)$, with the two $i$-excluding blocks $\{i',j',\tau(i,9)\}$ being also
necessarily $9$-excluding and thus disjoint from $\{i,z,9\}$. 

In the last block of Table \ref{tab:fob2}, i.e., the block treating the positive
roots $\al\notin R$ of the form $m_{ijk}$ where $i,j,k\in \{1,2,\dots, 8\}$, case (1) refers to the situation
where we can find a block of the form $T=\{i',j',9\}$ in $F$ that is disjoint
from $\{i,j,k\}$. This may occur in one of the following two ways.
\begin{enumerate}
  \item We have $\{i,j,k\}\cap\{x,y\}\neq \emptyset$, so that the set
    $Z:=\{i,j,k\}\setminus\{x,y\}=\{i,j,k\}\cap L(F)$ contains at most two
    elements, and $F$
    contains at
    most two blocks of the form $\{z,9,\tau(z,9)\}$ where $z\in Z$. Any other
    block in $F$ that contains $9$ suffices as $T$ in this subcase.
  \item We have $\{i,j,k\}\cap \{x,y\}=\emptyset$, and $9$ is collinear with two
    elements in the set $i,j,k$. We may assume without loss of generality that
    $\{i,j,9\}$ is a block in this subcase, so that one of the other two blocks
    avoids $k$ and suffices as $T$.
\end{enumerate}
Case (2) in the same block represents the only remaining possibility, which is
the possibility that 
$\{i,j,k\}\cap \{x,y\}=\emptyset$ and no pair of elements from $\{i,j,k\}$ is
collinear with $9$. In other words, we have $9\notin
\{\tau(i,j),\tau(i,k),\tau(j,k)\}$. In this case, it follows from Lemma \ref{lem:fano_inv}
(ii) that $\{\tau(i,j),\tau(i,k),\tau(j,k)\}$ is a block in $F$, so that the
element $\gamma:=m_{\tau(i,j)\tau(i,k)\tau(j,k)}$ lies in $\pra$. 

For the roots $\be$ listed in the second columns of the tables, the fact that
$\be\in \pra$ follows from direct inspection and Remark \ref{rmk:orthog} as in
the case of tables \ref{tab:mob1} and \ref{tab:mob2} in most cases, with the only exceptions being the three
roots denoted by $\gamma$. These roots appear in the last row of table
\ref{tab:fob1}, third to last row of Table \ref{tab:fob2}, and the last row of
Table \ref{tab:fob2}, and the fact that $\gamma\in \pra$ in these cases follows
from Lemma \ref{lem:res_prep} (ii), as in the last sentence of the previous
paragraph. The assertions concerning $\be(\al)$ indicated in the third columns
of tables \ref{tab:fob1} and \ref{tab:fob2} can be readily checked using Lemma
\ref{lem:res_prep} as before, and these assertions immediately imply the desired
conclusions of the theorem. \end{proof}

\subsection{Levels}
\label{sec:fob_level}
Theorem \ref{thm:fob_res} allows us to compute the level of $R(F)$ via the height
of $F$ for each $F\in \calf$, as follows.

\begin{prop}\label{prop:fob_level}
 We have
\[
\rho(R(F)) =h(F)+ 24 - \sum_{z\in\lc(F)} z \]
for all $F\in \calf$.
\end{prop}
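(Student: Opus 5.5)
We compute $\rho(R(F))=\abs{\res(R(F))}$ by counting the pieces of $\res(R(F))$ given in Theorem \ref{thm:fob_res}, treating types III and IV separately. Throughout, write $\lc(F)=\{x,y\}$ with $x<y$. In both types the set $C_3$ of roots $m_{ijk}$ is in bijection with the triples $\{i,j,k\}\se L(F)\setminus\{9\}$ satisfying the inversion conditions of Definition \ref{def:fob_height}.

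\textbf{Type III.} Here $y=9$, so $L(F)\se\{1,\dots,8\}$ and $\abs{C_3}=\abs{D(F)}=h(F)$. Directly, $\abs{C_1}=8-x$ and $\abs{C_2}=7$. This gives
\[
\rho(R(F))=h(F)+15-x=h(F)+24-(x+9),
\]
which is the claim.

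\textbf{Type IV.} Here $9\in L(F)$ and $x<y\le 8$.
\begin{itemize}
\item $\abs{C_1}=8$ and $\abs{C_2}=1$.
\item $C_2'$: for each $z\in\{x,y\}$, count the labels $i\in L(F)\setminus\{9\}$ with $\tau(i,9)>z$. The map $i\mapsto\tau(i,9)$ is a fixed-point-free involution on the six labels of $L(F)\setminus\{9\}$, pairing them into the three blocks through $9$. So $\abs{C_2'}=\sum_{z\in\{x,y\}}\#\{i\in L(F)\setminus\{9\}: \tau(i,9)>z\}$, which is simply the number of labels in $L(F)\setminus\{9\}$ exceeding $z$, summed over $z$.
\item $C_2''$: with $\tau=\tau(\cdot,9)$, this counts pairs $i<j$ with $\tau(j)>i$ and $\tau(i)>j$. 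This is the involution Rothe diagram $d_1$ of the perfect matching $M_9$ of $L(F)\setminus\{9\}$ formed by the blocks through $9$. By Proposition \ref{prop:wt} it equals $h(M_9)=c(M_9)+2n(M_9)$.
\item $C_3$: this counts the inversions of $F$ avoiding $9$.
\end{itemize}

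So it remains to show that the inversions of $F$ containing $9$ number exactly
\[
\abs{C_2''}+(\#\text{labels}>x)+(\#\text{labels}>y)+9-24+x+y,
\]
rearranged appropriately. By Lemma \ref{lem:fano_inv}(iii), an inversion containing $9$ is a $z$-inversion $\{j_1,j_2,j_3\}$ with $9$ among the $j$'s. I expect this to hold automatically whenever the blocks through $z$ have $9$ as a larger element. The approach is to classify these via the matching $M_9$ and show the needed identity directly. This uses that $x,y$ are the two elements of $\{1,\dots,8\}$ missing from $L(F)$, so the counts of labels greater than $x$ or $y$ are $8-x-\#\{y>x\}$ and $8-y$.

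\textbf{Main obstacle.} I expect this final identity to be the hardest step, since it ties inversions through $9$ to the crossings and nestings of the three-arc matching $M_9$. If it resists a direct argument, the fallback is to note that everything depends only on the relative order of $x$, $y$ and the six labels together with $M_9$ and the remaining Fano structure. One can then check the identity case by case over the $15$ matchings of six points and the relative positions of $x$ and $y$, reducing it to a bounded verification.
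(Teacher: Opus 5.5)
\textbf{There is a genuine gap in the Type IV case.} Your Type III computation is correct. In Type IV, your counts of $C_1$, $C_2$, $C_2'$, $C_3$ and your identification of $C_2''$ with $D_{\texttt{FPF}}(M_9)$ are also correct and agree with the paper. The problem is the final step: you leave it as an expectation plus a fallback case check, so the Type IV case is not established.

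Your displayed target also simplifies. The labels in $L(F)\setminus\{9\}$ exceeding $x$ and $y$ number $7-x$ and $8-y$, so $\abs{C_2'}=15-x-y$, and all dependence on $x,y$ cancels. What you actually need is only that the number $I_9$ of inversions of $F$ containing $9$ equals $\abs{C_2''}$. This involves neither $x$, $y$, nor the rest of the Fano structure. So the proposed reduction to ``relative positions of $x$ and $y$ and the remaining Fano structure'' is misdirected. Routing the step through $h(M_9)=c(M_9)+2n(M_9)$ and $z$-inversions makes it look much harder than it is. A finite verification would be logically acceptable in principle, but it is neither carried out nor needed.

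\textbf{The missing idea} is to use the other equality in Proposition \ref{prop:wt}, namely $d_1=d_2$, rather than $d_1=h$.

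By Definition \ref{def:fob_height}, a triple $\{i,j,9\}$ is an inversion exactly when $\tau(i,9)<j$, $\tau(j,9)<i$ and $\tau(i,j)<9$. The last condition is automatic: $9$ is the largest label, and $\tau(i,j)=9$ would force $\tau(i,9)=j$, contradicting $\tau(i,9)<j$.

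Hence the inversions through $9$ are in bijection with $D_2(M_9)$, where $\tau_{M_9}=\tau(\cdot,9)$. This gives
\[
I_9=d_2(M_9)=d_1(M_9)=\abs{C_2''},
\]
so $\abs{C_2''}+\abs{C_3}=h(F)$. Then
\[
\rho(R(F))=8+1+(7-x)+(8-y)+h(F)=h(F)+24-x-y,
\]
which is exactly the paper's argument.
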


\begin{proof}
  If $F$ is of type III, then $\lc(F)=\{x,9\}$ for some $1\le x\le 8$, so that
  for the sets $C_1, C_2,$ and $C_3$ from Theorem \ref{thm:fob_res} (i), we have
  $\abs{C_1}=8-x$
  and $\abs{C_2}=7$. 
  The set $C_3$ is indexed by precisely the inversions of $F$, so
  we have
  $\abs{C_3}=h(F)$ 
  and 
  \[
    \rho(R(F))=\abs{C_1}+\abs{C_2}+\abs{C_3}=15-x+h(F)=h(F)+24-\sum_{z\in\lc(F)}z.
  \]

  Now suppose that $F$ is of type IV and that $\lc(F)=\{x,y\}$, where $x<y<9$.
  Let $C_1, C_2, C_2', C_2'',$ and $C_3$ be as in Theorem \ref{thm:fob_res}
  (ii). It is evident that $\abs{C_1}=8$ and $\abs{C_2}=1$. The set $C_2'$ can
  be written as the disjoint union of the sets 
  \[X:=\{p_{ix9}: i\in L(F) \text{
    and } 
  \tau(i,9)>x\} \text{ and } Y:=\{p_{iy9}: i\in L(F) \text{ and }
\tau(i,9)>y\}.\] The set
  $Y$ has $8-y$ elements, namely, the distinct solutions for $i$ to the equations
  of the form $\tau(i,9)=y'$ where $y'$ is a label larger than
  $y$. Similarly, since there are $7-x$ labels in $L(F)$ larger than $x$, we have
  $\abs{X}=7-x$. 

  If $\{9,a,b\}, \{9,c,d\}, \{9,e,f\}$ are the three blocks
  containing $9$ in $F$, then the partition $M=\{\{a,b\}, \{c,d\}, \{e,f\}\}$
  may be viewed as a perfect matching of the set $L(F)\setminus\{9\}$ with the
  property that $\tau_M(i)=\tau(i,9)$, and then
  the set $C_2''$ is precisely the involution Rothe diagram $D_{\texttt{FPF}}(M)$ as defined
  in Definition \ref{def:matching_features} (v). The equality $d_1(M)=d_2(M)$
  from Proposition \ref{prop:wt} then implies that $\abs{C''_2}=\abs{Z}$ for the
  set
  \[
    Z:=\{p_{ij9}: i,j\in L(F), \tau(i,9)<j, \tau(j,9)<i\}
  \]
  and $Z$ in turn has the same size as the set
\[
  Z':= \{p_{ij9}: i,j\in L(F), \tau(i,9)<j, \tau(j,9)<i, \tau(i,j)<9\},
  \]
  where the last equality holds because $9$ is the largest label and is not
  collinear with $i$ and $j$ whenever $\tau(i,9)<j$. The triples indexing the elements of $Z'$ and $C_3$
  partition the inversions of $F$, so we have 
  \[
    \abs{C_2''}+\abs{C_3}=\abs{Z'}+\abs{C_3}=h(F).
  \]

  Combining the results of last two paragraphs yields
  \[
    \rho(R(F))=8+1+(8-y)+(7-x)+h(F)= h(F)+24-\sum_{z\in \lc(F)} z,
  \]
  which completes the proof. 
\end{proof}

\section{Enumerative results}
\label{sec:enum}
In this section, we compute the level generating functions of the
$\sigma$-classes in $\ob$ and of $\ob$ itself. Recall that the $\sigma$-classes
are indexed by the sequences in the set $\cals$ via the poset isomorphism $\Psi$
from Definition \ref{defn:iso_chain}, and that Theorem \ref{thm:gen_func}
expresses the generating functions for the $\sigma$-classes in terms of these
sequences. We will analyse how the isomorphism $\Psi$ is highly compatible with
the partition of $\ob$ into $\mob$ and $\fob$ in Section \ref{sec:seq_to_sum}.
This analysis will then allow us to apply the results of sections \ref{sec:mob}
and \ref{sec:fob} to obtain the level generating functions of the
$\sigma$-classes in Section \ref{sec:gen_func}. We will also use these
generating functions to deduce the Poincar\'e polynomial of $\ob$.

\subsection{Sums via sequences}
\label{sec:seq_to_sum}
Maintain the notation of Section \ref{sec:sums}, and recall that we 
have a chain of
natural poset isomorphisms
\begin{equation*}
  \Psi:
 \cals\stackrel{\psi}{\ra} J(P) \stackrel{\varphi}{\ra} \calj
  \stackrel{\phi}{\ra} \nn(\ob)\stackrel{\sigma}{\ra} \Sigma
\end{equation*}
that allows us to index the sums in the distributive lattice
$\Sigma=\{\sigma(R):R\in \ob\}$ using the
sequences in the set 
\[
\cals=\{(d_1,d_2,d_3,d_4): 0 \le d_i\le c_i \text{ for all $i\in [4]$ and
$d_{i-1}\ge \min(d_i-1,c_i)$ for all $i\in \{2,3,4\}$}\}.
\]
Recall that $\abs{\Sigma}=50$, that $\calj$ is the set $\{\nu\in W:\nu\le_L
\wnn\inverse\}$ where $\wnn$ is
the fully commutative element given by 
\[
    \wnn=(s_1s_4s_6s_8)(s_3s_5s_7)(s_4s_6)(s_2s_5)(s_4)(s_3)(s_1), 
\]
that $P$ is the heap poset of $w$ as shown in Figure \ref{fig:heap}, and that
$J(P)$ is the set of ideals of $P$. 

Recall each $\sigma$-class of $\ob$ lies entirely in either $\mob$ or $\fob$ by
Corollary \ref{cor:all}. One goal of this subsection is to explain how the
partition of $\ob$ into $\mob$ and $\fob$ is reflected in the sets $\cals, J(P),
\calj, \nn(\ob),$ and $\Sigma$, so that we will be able to compute $\Psi(\seqd)$
directly in terms of $\seqd$ for each $\seqd\in \cals$ (Proposition
\ref{prop:seq_to_sum}). Another goal of this subsection is to study certain
nonnesting perfect matchings of $\ten$ associated to the sequences $\seqd\in \cals$
that correspond to the $\sigma$-classes in $\mob$ (Lemma \ref{lem:lr}, Corollary
\ref{cor:lr}), to obtain results that are necessary for the proofs of Section
\ref{sec:gen_func}. 

Let $\seqd_M=(1,2,3,4)\in \cals$ and let $J_M=\psi(d_M)$, so that $J_M$ consists
of all elements in $P$ under the dashed line in Figure \ref{fig:heap}, as
we observed in Example \ref{exa:bijections}. 
Set $w_M=\varphi(J_M)$.
Let 
\[
  \cals_M=\{\seqd\in \cals: \seqd\le \seqd_M\}=\{(d_1,d_2,d_3,d_4)\in \cals: 0\le d_i\le i
  \text{ for all } i\},
\]
where $\le$ stands for the entrywise comparison order on $\cals$, and let
$\cals_F=\cals\setminus\cals_M$. Finally, let 
 \[ \cali_M= \{K\in J(P): K\se J_M\},\] 
and  
\[
  \cali_F=J(P)\setminus\cali_M=\{K\in J(P): K\not\se J_M\}. 
\] 
There is a unique element labelled by the generator $s_2$ in the heap $P$. If we
denote this element by $p_0$, then $p_0$ is the unique minimal element of
$P\setminus J_M$ by inspection of Figure \ref{fig:heap}, so $\cali_F$ and
$\cali_M$ can be characterized as the sets of all ideals in $P$ that do and do
not contain $p_0$, respectively. The sets $\cali_F$ and $\cali_M$ also have
convenient characterizations in terms of their corresponding sequences in $\cals$:
since $J_M=\psi(\seqd_M)$, we have 
\begin{equation}
\label{eq:cali_M}
\cali_M=\{\psi(\seqd):\seqd\in \cals_M\}=\{\psi((d_1,d_2,d_3,d_4)): 0\le d_i\le i
  \text{ for all } i\};
\end{equation}
and since an ideal of $P$ contains $p_0$ if and only if
it contains all points in $C_1, C_2$, and $C_3$, we have 
\begin{equation}
\label{eq:cali_F}
\cals_F=\{(1,2,4,j):0\le j\le 7\} \text{\; and \;}
  \cali_F=\{\psi((1,2,4,j)): 0\le j\le 7\}.
\end{equation}
In particular, we have $\abs{\cali_F}=8$, and the fact that
$\abs{J(P)}=\abs{\Sigma}=50$ then implies that $\abs{\cali_M}=42$.  

\begin{rmk}
\label{rmk:42}
\begin{enumerate}
\item 
The fact that $\abs{\cali_M}=42$ can be proved in many other ways, for example,
by noting that integer sequences of the form $(d_1,d_2,d_3,d_4)$ where $0\le
d_i\le i$ are enumerated by the Catalan number 42. Another proof relevant to us
proceeds as follows: when viewed as a poset itself, the ideal $J_M$, whose Hasse
diagram is shown in Figure \ref{fig:a4}, is isomorphic to the root poset of type
$A_4$; the ideals of this root poset are well known to be in a natural bijection
with the 42 Dyck paths of semilength 5 \cite[Section 5.1.3]{ringel16}, as well
as with the 42 nonnesting perfect matchings of any set of size 10
\cite[Interpretation 64]{stanley15}, so it follows that we have
$\abs{\cali_M}=42$.  \item The number $\abs{\cali_F}=8$ is also the number of
possible sums for the elements of $\fob$ by Lemma \ref{lemm:fob_sum}, because we
have $\{\min(\lc(F)):F\in \calf\}=\{1,2,\dots, 8\}$. \end{enumerate} \end{rmk}

\begin{figure}[h!]
\begin{tikzpicture}
\node[main node] (0) {$s_1$};
\node[main node] (1) [right=1cm of 0] {$s_4$};
\node[main node] (2) [right=1cm of 1] {$s_6$};
\node[main node] (3) [right=1cm of 2] {$s_8$};
\node[main node] (4) [above right=0.6cm and 0.4cm of 0] {$s_3$};
\node[main node] (5) [above right=0.6cm and 0.4cm of 1] {$s_5$};
\node[main node] (6) [above right=0.6cm and 0.4cm of 2] {$s_7$};
\node[main node] (7) [above right=0.6cm and 0.4cm of 4] {$s_4$};
\node[main node] (8) [above right=0.6cm and 0.4cm of 5] {$s_6$};
\node[main node] (9) [above right=0.6cm and 0.4cm of 7] {$s_5$};

\node[small node] (x) [below left = 0.3cm and 0.9cm of 0] {};
\node[small node] (y) [left = 0.5cm of 7] {};
\node[small node] (z) [right=0.5cm of 4] {};
\node[small node] (w) [above right = 0.3cm and 0.9cm of 9] {};

\path[draw]
(0)--(4)--(1)--(5)--(2)--(6)--(3)
(4)--(7)--(5)--(8)--(6)
(8)--(9)--(7);
 \draw[dashed] 
  (x)--(y)--(z)--(w);
\end{tikzpicture}
    \caption{The ideal $J_M$}
    \label{fig:a4}
\end{figure}

By Remark \ref{rmk:42}, the numbers $\abs{\cali_M}=42$ and $\abs{\cali_F}=8$ are
also the number of nonnesting perfect matchings of the set $\ten$ and the number
of $\sigma$-classes in $\fob$, respectively. This is not a coincidence, but a
reflection of the fact that the bijection $\phi\circ\varphi: J(P)\ra \nn(\ob)$
naturally restricts to two bijections, one from $\cali_M$ to the set
$\nn(\ob)\cap \mob$, whose elements correspond bijectively to the nonnesting
perfect matchings of $\ten$ by Remark \ref{rmk:features}, and the other from
$\cali_F$ to the set $\nn(\ob)\cap \fob$, whose elements bijectively represent
the $\sigma$-classes partitioning $\fob$ by Proposition \ref{prop:Sigma}. This
fact, which will be proved soon in Proposition \ref{prop:seq_to_sum}, is a
crucial connection between the results of Section \ref{sec:sums} and the theory
of sections \ref{sec:mob} and \ref{sec:fob}. 

\begin{defn}
\label{defn:seq_to_word}
Maintain all notation recalled or introduced in this subsection, and let $C'_1,
C'_2, C'_3$ and $C'_4$ be the four Northeast--Southwest diagonal
  chains appearing from right to left in Figure \ref{fig:a4}.
\begin{enumerate}
\item 
Let 
\[
  w_1=s_1, w_2=s_3s_4,
w_3=s_2s_4s_5s_6, \text{ and }w_4=s_1s_3s_4s_5s_6s_7s_8,\]
so that $\wnn\inverse=w_4w_3w_2w_1$.
For each sequence $\seqd=(d_1,d_2,d_3,d_4)\in \cals$,
let $\nu_i$ be the element
expressed by the suffix of $w_i$ consisting of the last $d_i$ elements for each
$i$, and
define
\[
  \nu_{\seqd}=\nu_4\nu_3\nu_2\nu_1.
\]
\item Let 
  \[
  w'_1=s_8, w'_2=s_7s_6, w'_3=s_6s_5s_4, \text{ and } w'_4=s_5s_4s_3s_1,
  \]
  so that $w_M\inverse = w'_4w'_3w'_2w'_1$.  
For each integer sequence $\seqd'=(d'_1,d'_2,d'_3,d'_4)$ where $0\le d'_i\le i$ for all
$i$, let $\nu'_i$ be the 
element expressed by the suffix of $w'_i$ consisting of the last $d'_i$ elements for
each $i$, and
define
\[
  \nu'_{\seqd'}=\nu'_4\nu'_3\nu'_2\nu'_1.
\]
\item For each ideal $K\in J(P)$, we define $\seqd_K=(d_1,d_2,d_3,d_4)$ where
  $d_i=\abs{K\cap C_i}$ for each $i$. For each ideal $K\in J_M$, we define
  $\seqd'_K=(d'_1,d'_2,d'_3, d'_4)$ where $d'_i=\abs{K\cap C'_i}$ for each $i$,
  and we define the \emph{mirror} of $K$ to be the ideal $\hat K=\psi(\seqd'_K)$.
\end{enumerate} \end{defn}

Note that for each $K\in J(P)$, the sequence $\seqd_K$ is precisely
$\psi\inverse(K)$ by the definition of $\psi$. The symmetry of Figure
\ref{fig:a4} guarantees that $\hat K$ is indeed just the mirror image of $K$
across the central vertical axis of the figure (i.e., the line going through two
elements labelled by $s_5$), and that we have $\seqd_{\hat K}=\seqd'_K$. 

\begin{exa}
\label{exa:dd'}
For the ideal $K\in \cali_M$ consisting of the elements below the dashed lines in
Figure \ref{fig:a4}, we have $\seqd_K=(1,2,2,3)$ and $\seqd'_K=(1,2,3,2)$, and we have
\[
  \nu_{\seqd_K}=(s_6s_7s_8)(s_5s_6)(s_3s_4)(s_1)\text{\quad and\quad}
  \nu_{\seqd'_K}=(s_3s_1)(s_6s_5s_4)(s_7s_6)(s_8).
\]
The mirror
of $K$ is $\hat K=\psi((1,2,3,2))$, which can be obtained from $K$ by removing
its unique maximal element labelled by $s_6$ and then adding the element
labelled by $s_4$ above the dashed lines.
\end{exa}

\begin{lemma}
\label{lem:d_d'}
Let $\seqd\in \cals$, and let $w_{\seqd}=(\varphi\circ\psi)(\seqd)$ be the corresponding element
in $\calj$.
\begin{enumerate}
  \item If $\seqd\in \cals$, then $\nu_{\seqd}$ is a reduced word for $w_{\seqd}$.
\item Let $\seqd\in \cals_M$. If $K=\psi(\seqd)$ is the corresponding ideal in
  $\cali_M$ and $\seqd'=\seqd'_K$, then we have   
\begin{equation}
\label{eq:lin_ext}
w_{\seqd}=\nu_{\seqd}=\nu'_{\seqd'},
\end{equation}
where $\nu_{\seqd}$ and $\nu'_{\seqd'}$ are reduced words of $w_{\seqd}$,  
  and the multiset of numbers appearing in $\seqd$ equals the
  multiset of numbers appearing in $\seqd'$.  
\end{enumerate}
\end{lemma}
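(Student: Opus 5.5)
Both parts will follow from the heap theory of fully commutative elements \cite{FC}: for an ideal $K$ of the heap $P$, every linear extension of $K$ is a reduced word for the element attached to $K$, and $\varphi(K)$ is the inverse of that element. I will produce the required linear extensions directly from the chain decompositions of $P$ and of $J_M$.

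For (i), fix $\seqd\in\cals$ and $K=\psi(\seqd)$. The chains $C_1,\dots,C_4$ are the NW--SE diagonals of Figure \ref{fig:heap}. Read each chain $C_i$ from bottom to top: $C_1=(s_1)$, $C_2=(s_4,s_3)$, $C_3=(s_6,s_5,s_4,s_2)$ and $C_4=(s_8,s_7,s_6,s_5,s_4,s_3,s_1)$. These are exactly the words $w_i$ read backwards.

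The key point is that listing the elements of $K\cap C_1$, then $K\cap C_2$, then $K\cap C_3$, then $K\cap C_4$ (each from bottom to top) gives a linear extension of $K$. The reason is that every covering relation in $P$ between different chains goes from $C_{i}$ up to $C_{i-1}$, or from $C_{i-1}$ down to $C_i$; this is to be checked on the Hasse diagram. Hence no element of $C_i$ lies below an element of $C_j$ with $j>i$ except via the order already respected. More precisely, I must verify that whenever $x\in C_j$ and $y\in C_i$ with $x<y$, we have $j\le i$ or the listing still places $x$ first. I expect this to be the only real check, and I will do it by inspecting the edges of Figure \ref{fig:heap}.

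Granting this, the word for $K$ is the product over $i=1,\dots,4$ of the first $d_i$ letters of the reversed $w_i$. Inverting it gives $\nu_4\nu_3\nu_2\nu_1$, where $\nu_i$ is the suffix of length $d_i$ of $w_i$. This is $\nu_{\seqd}$, and it is reduced because it is a linear extension of a heap ideal. This matches Example \ref{exa:bijections}.

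For (ii), I repeat the argument inside $J_M$ using the NE--SW chains $C'_i$ of Figure \ref{fig:a4}. Read bottom to top, these are $C'_1=(s_8)$, $C'_2=(s_6,s_7)$, $C'_3=(s_4,s_5,s_6)$ and $C'_4=(s_1,s_3,s_4,s_5)$, i.e. the reverses of the $w'_i$. By the mirror symmetry of Figure \ref{fig:a4}, the same linear-extension argument shows that $\nu'_{\seqd'}$ is another reduced word for $w_{\seqd}$. Since both words represent the same element $w_{\seqd}$, this gives \eqref{eq:lin_ext}.

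For the multiset claim, I will view $J_M$ as the staircase (the $A_4$ root poset). The chain counts $d_i=\abs{K\cap C_i}$ and $d'_i=\abs{K\cap C'_i}$ are then the row lengths and column lengths of a shape contained in the staircase. Concretely, $K$ is determined by a Dyck-type boundary path, and the $C_i$ and $C'_i$ counts are the heights of that path measured along the two diagonal directions.

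I will argue by induction on $\abs{K}$ that the two multisets agree: adding a maximal element of $K$ increases one $d_i$ from $a$ to $a+1$ and one $d'_j$ from $a$ to $a+1$ with the same value $a$. This is because a new element at the corner where chain $C_i$ and chain $C'_j$ meet has exactly $a$ predecessors in each direction, by the staircase geometry.

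This equal-increment step is the main obstacle, and I would verify it by labelling the staircase positions by coordinates $(i,j)$ with position index equal to $d_i$ along $C_i$ and to $d'_j$ along $C'_j$.
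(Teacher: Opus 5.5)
Your proof is correct. For (i), and for the equality $w_{\seqd}=\nu_{\seqd}=\nu'_{\seqd'}$ in (ii), you argue exactly as the paper does: reading $K\cap C_1,\dots,K\cap C_4$ (resp.\ $K\cap C'_1,\dots,K\cap C'_4$) from bottom to top gives two linear extensions of the same heap ideal, hence two reduced words for the same element.

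There is one slip in (i): you have the direction of the cross-chain covers reversed. In Figure \ref{fig:heap}, every cover between distinct chains goes from $C_{i-1}$ \emph{up} to $C_i$. For example, $s_1\in C_1$ is covered by $s_3\in C_2$, and the bottom $s_6\in C_3$ is covered by $s_7\in C_4$. This direction is exactly what makes the listing in increasing $i$ a linear extension; with the direction you state, it would fail. Your ``more precisely'' criterion ($x<y$, $x\in C_j$, $y\in C_i$ implies $j\le i$) is the right one, and it does hold.

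For the multiset claim your route differs from the paper's. The paper passes through the mirror ideal $\hat K$. It writes $\mset{\seqd_K}=\mset{\seqd_{\hat K}}=\mset{\seqd'_K}$ and justifies the first equality by asserting that $\seqd_K$ and $\seqd_{\hat K}$ are reversals of each other. That assertion is false. Reflecting Figure \ref{fig:a4} carries each $C_i\cap J_M$ onto $C'_i$, which yields only $\seqd_{\hat K}=\seqd'_K$; indeed, in Example \ref{exa:dd'} one has $\seqd_K=(1,2,2,3)$ but $\seqd_{\hat K}=(1,2,3,2)$. So the middle step of the paper's chain is just a restatement of the claim. Your equal-increment induction, by contrast, is an actual proof.

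To make your induction airtight, state explicitly the two facts it rests on:
\begin{enumerate}
\item $K\cap C_i$ and $K\cap C'_j$ are bottom segments of these chains, because $K$ is a down-set and the chains are saturated.
\item In $J_M$, every $C_i\cap J_M$ and every $C'_j$ starts in the bottom row and climbs one row per element.
\end{enumerate}
Then a maximal element $m\in K$ lying in row $r$ (counting from $0$) is the top of both $K\cap C_i$ and $K\cap C'_j$ for the two chains through $m$. Removing $m$ lowers one $d_i$ and one $d'_j$ from $r+1$ to $r$, and the induction goes through.

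In Dyck-path language, this is the fact that up-steps leaving height $h$ and down-steps arriving at height $h$ are equinumerous. Phrased that way, it also gives Corollary \ref{cor:lr} directly, with no nonnesting hypothesis and no Weyl group action.
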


\begin{proof}
Let $\seqd\in \cals$ and let $K=\psi(\seqd)$ be the corresponding ideal. Inspection of
Figure \ref{fig:heap} shows that listing the elements of $I_{\seqd}\cap C_i$ from
bottom to top, in increasing order for $i$, results in a linear extension of
$I_{\seqd}$. The labels of these elements in the heap $P$ are precisely the letters
appearing in the word $\nu_{\seqd}$ from right to left, so it follows from the
definition of $\varphi$ that $\nu_{\seqd}$ is a reduced word for $w_d$. This proves (i).

Now suppose that $\seqd\in \cals_M$. The fact that $\nu_{\seqd}$ is a reduced word for
$w_{\seqd}$ follows from (i) by restriction, and similarly $\nu'_{\seqd'}$ is a reduced word
for $w_{\seqd}$ because the word $\nu'_{\seqd'}$ corresponds to another linear extension
of $I_{\seqd}$, namely, the one obtained by ordering the elements as they appear in
the chains $C'_1, C'_2, C'_3,$ and $C'_4$.

It remains to prove that $\seqd$ and $\seqd'$ contain the same multiset of numbers when
$\seqd\in \cals_M$. This follows from the chain of equalities
\[
  \mset{\seqd}=\mset{\seqd_K}=\mset{\seqd_{\hat K}}=\mset{\seqd'_{K}}=\mset{\seqd'}, 
\]
where $\mset{e}$ denotes the multiset of elements in $e$ for each sequence $e$.
Here, the first equality holds because $\seqd=\psi\inverse(K)=\seqd_K$; the
second equality holds because the fact that $K$ and $\hat K$ are left-right
mirror images of each other implies that, as ordered sequences, $\seqd_K$ and
$\seqd_{\hat K}$ are reversals of each other; the third equality holds because
$\seqd_{\hat K}=\seqd'_{K}$ as we observed earlier; and the last equality holds
because $\seqd'=\seqd'_K$ by definition. The proof is now complete. \end{proof}

The following lemma will provide one method for computing $\Psi(\seqd)$ for any
$\seqd\in \cals_M$. Recall from sections \ref{sec:Phi} and \ref{sec:mob_level} that
the simple reflections $s_1, s_3, s_4, \dots, s_8, s_9=s_\theta$ in the Weyl
group $W=W(E_8)$ generate a copy $W'$ of the symmetric group $S_9$ inside $W$,
and that $W'$ naturally acts on the set $\calm$ of perfect matchings of $\ten$,
with $s_1, s_3, s_4, \dots, s_7$, and $s_8$ acting acting as the transpositions
$(12), (23), (34),\dots, (67)$, and $(78)$, respectively.  In the proof of the
 lemma, we will consider each arc $(i,j)$ in the perfect matching
$M_A=\{(0,1),(2,3), (4,5), (6,7), (8,9)\}$ as an elastic string with two
objects, $o_i$ and $o_j$, attached at the left and right endpoints.
We will consider the numbers $0,1,2,\dots, 8,9$ as \emph{positions}, with $o_i$
occupying position $i$ for each $i$, and we will think of the actions of the
permutations in $W'$ on $\calm$ as rewirings of arcs that move the objects $o_i$
to different positions. For example, if we apply the simple reflection
$s_4=(3,4)$ to $M_A$, then we would cross the arcs $(2,3)$ and $(4,5)$, moving
$o_3$ to position 4 and $o_4$ to position 3.

\begin{lemma}
\label{lem:lr}
Let $M_A=\{(0,1),(2,3), (4,5), (6,7), (8,9)\}\in
\calm$. Let $\seqd=(d_1,d_2,d_3,d_4)\in \cals$, $K=\psi(\seqd)\in \calm$,
$w_{\seqd}=(\varphi\circ\psi)(\seqd)\in \calj$, and $R_{\seqd}=w_{\seqd}(\theta_A)\in \nn(\ob)$. If
$\seqd\in\cals_M$ and $\seqd'_K=(d'_1,d'_2,d'_3,d'_4)$, then $w_{\seqd}(M_A)$ is the
nonnesting perfect matching in $\calm$ whose left endpoints are
$0,2-d_1,4-d_2,6-d_3,$ and $8-d_4$ and whose right endpoints are $d'_1+1,
d'_2+3, d'_3+5, d'_4+7$, and $9$, where both lists are given in increasing
order. 
\end{lemma}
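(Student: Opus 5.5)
We will compute $w_{\seqd}(M_A)$ twice, once from each of the two reduced words supplied by Lemma \ref{lem:d_d'} (ii), using the elastic-string picture. The first computation tracks the left endpoints and the second tracks the right endpoints.

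First note that for $\seqd\in\cals_M$ none of the words involve $s_2$. Indeed $d_3\le 3$, so $\nu_3$ is a suffix of $s_4s_5s_6$, and $d_4\le 4$, so $\nu_4$ is a suffix of $s_5s_6s_7s_8$. Hence $w_{\seqd}\in W'$ acts on $\calm$ by permuting positions. Apply $\nu_{\seqd}=\nu_4\nu_3\nu_2\nu_1$ to $M_A$ from right to left, so $\nu_1$ acts first. Here:
\begin{itemize}
\item $\nu_1$ is empty or $s_1=(12)$;
\item $\nu_2$ is a suffix of $s_3s_4=(23)(34)$;
\item $\nu_3$ is a suffix of $s_4s_5s_6$;
\item $\nu_4$ is a suffix of $s_5s_6s_7s_8$.
\end{itemize}

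Read as positional moves, $\nu_i$ with $d_i$ letters slides the object $o_{2i}$, the left endpoint of the arc $(2i,2i+1)$, leftward by $d_i$ positions. Each adjacent transposition swaps it with the object currently to its left. For example, $\nu_1=s_1$ moves $o_2$ to position $1$ and $o_1$ to position $2$. We will check by induction on $i$ that the objects $o_{2i}$ overtake only right-endpoint objects and never each other. This uses $d_{i-1}\ge d_i-1$ (from the definition of $\cals$) together with $d_i\le i$. Since $o_{2i}$ stays left of $o_{2i+1}$, the arc stays oriented with $o_{2i}$ as left endpoint. Hence after the moves the left endpoint of the $i$-th middle arc sits at $2i-d_i$, position $0$ stays a left endpoint, and $9$ stays a right endpoint. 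The ordering $2i-d_i<2i+2-d_{i+1}$ gives the increasing list $0,2-d_1,4-d_2,6-d_3,8-d_4$.

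Symmetrically, we apply $\nu'_{\seqd'}=\nu'_4\nu'_3\nu'_2\nu'_1$. Its factors are suffixes of $s_8$, $s_7s_6$, $s_6s_5s_4$ and $s_5s_4s_3s_1$. Each $\nu'_i$ slides the right endpoint $o_{9-2i}$, from the arc $(8-2i,9-2i)$, rightward by $d'_i$ positions. By the same monotonicity argument, read from the right, the right endpoints end at $9-2i+d'_i$. Relabelling $i\mapsto 5-i$ and comparing with the statement's indexing gives the right endpoints $d'_1+1,d'_2+3,d'_3+5,d'_4+7,9$. Here the indices must be matched carefully to the chains $C'_i$ of Definition \ref{defn:seq_to_word}: $C'_1$ corresponds to $s_8$, which moves $o_7$. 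If the indexing in the statement does not match, the fix is to reindex $\seqd'$. By Lemma \ref{lem:d_d'} (ii) both computations describe the same matching $w_{\seqd}(M_A)$, so its left and right endpoints are as claimed.

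Finally, nonnestingness follows from the general fact that a perfect matching is determined by its sets of left and right endpoints if we require it to be nonnesting, with the $k$-th smallest left endpoint paired to the $k$-th smallest right endpoint. So it suffices to show that in $w_{\seqd}(M_A)$ the arcs preserve the order, i.e. the $i$-th left endpoint is joined to the $i$-th right endpoint. This holds because in the first computation the left endpoints never pass each other and, dually in the second, the right endpoints never pass each other; the objects $o_{2i},o_{2i+1}$ of each arc therefore keep their relative order among the left endpoints and among the right endpoints. The main obstacle is the bookkeeping in the sliding argument, namely verifying via the constraints on $\cals_M$ that no two left endpoint objects are ever swapped (and dually for right endpoints). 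I would do this by a short case check on the last letter of $\nu_{i-1}$ versus the first letter of $\nu_i$.
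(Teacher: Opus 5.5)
Your route is the paper's: you compute $w_{\seqd}(M_A)$ from the two reduced words $\nu_{\seqd}$ and $\nu'_{\seqd'}$ of Lemma \ref{lem:d_d'}. In each, $\nu_i$ slides $o_{2i}$ leftward past right endpoints only, because $2i-2-d_{i-1}<2i-d_i$. Your nonnesting argument differs slightly from the paper's ``only crossings are created'': left endpoints and right endpoints each keep their relative order, so the $k$-th left endpoint is paired with the $k$-th right endpoint. That works fine. For the second pass, you should say explicitly that $d'_i\le d'_{i-1}+1$ holds because $\seqd'_K=\seqd_{\hat K}$ with $\hat K\in\cali_M$.

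The step that fails is the relabelling. Your computation correctly shows that $\nu'_i$ moves $o_{9-2i}$ to position $9-2i+d'_i$. Substituting $i=5-j$ gives $2j-1+d'_{5-j}$, so the right endpoints in increasing order are $d'_4+1<d'_3+3<d'_2+5<d'_1+7<9$, not $d'_j+(2j-1)$.

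No reindexing can recover the printed version, because it is false. Take $\seqd=(0,0,0,1)$. Then $K=\{s_8\}$, $\seqd'_K=(1,0,0,0)$, and $w_{\seqd}(M_A)=s_8(M_A)=\{(0,1),(2,3),(4,5),(6,8),(7,9)\}$, whose right endpoints are $1,3,5,8,9$. The printed formula gives $2,3,5,7,9$ instead. For $\seqd=(1,2,3,4)$ it even produces $11$. The paper's proof asserts the same misindexed formula via ``a similar argument'', so the slip is in the source, not in your method.

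You should commit to the corrected statement $r_i=d'_{5-i}+(2i-1)$ rather than claim that relabelling yields the printed one. The later uses of the lemma are unaffected: Corollary \ref{cor:lr} and the proof of Theorem \ref{thm:gen_func} only involve the multiset $\mset{d'_i}$ and expressions symmetric in the $d'_i$.
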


\begin{proof}
  Let $M=w_{\seqd}(M_A)$. Define $\nu_{\seqd}=\nu_4\nu_3\nu_2\nu_1$ and
  $\nu'_{\seqd'}=\nu'_4\nu'_3\nu'_2\nu'_1$ as in Definition
  \ref{defn:seq_to_word} (i) and (ii), respectively, so that
  $w_{\seqd}=\nu_{\seqd}=\nu_{\seqd'}$ by Lemma \ref{lem:d_d'} (i). It follows
  that $M$ can be obtained from $M_A$ by successive applications of $\nu_1,
  \nu_2, \nu_3,$ and then $\nu_4$, or of $\nu'_1, \nu'_2, \nu'_3$, and then
  $\nu'_4$.

Let $i\in \{1,2,3,4\}$, and let $M_{\le i}=\nu_{i-1}\cdots\nu_2\nu_1(M_A)$. The
definition of $\nu_i$ implies that as a permutation, $\nu_i$ is the $(d_i+1)$-cycle given by 
\[
  \nu_i=(2i,
2i-d_i, 2i-d_i+1, 2i-d_i+2, \dots, 2i-1).
\]
It follows that applying $\nu_i$ to $M_{\le i}$ fixes all
objects $o_j$ where $j>2i$, shifts $2i$ to the left by $d_i$ positions, and
shifts every object of the form $o_j$ where $2i-d_i\le j<2i$ to the right by one
position. The definition of $\cals$ guarantees that $d_{i-1}\ge d_i-1$, so we
have 
\[
  (2i-2)-d_{i-1}< 2i-d_i.
\]
Thus, as we apply $\nu_1, \nu_2,\nu_3,$ and $\nu_4$ to $M_A$
successively, the objects $o_{0}, o_{2}, o_4, o_6,$ and $o_8$ remain attached to
the left endpoints of the five arcs throughout the process, and the positions
they occupy remain in the same relative order, with each $o_i$ being fixed by
every $\nu_j$ where $j<i$, moved to position $2i-d_i$ by $\nu_i$, and then fixed
at position $2i-d_i$ by every $\nu_j$ where $j>i$. In particular, when $\nu_i$
shifts $o_{2i}$ to the left by $d_i$ positions and shifts the objects $o_j$
where $2i-d_{i}\le j<2i$ to the right by one position, it only moves $o_{2i}$
past right endpoints of arcs. Doing so only creates crossings and no nestings. 

It follows from the previous paragraph that $M$ is a nonnesting perfect matching
with left endpoints $0,2-d_1, 4-d_2, 6-d_3,$ and $8-d_4$, where the endpoints
are listed in increasing order. A similar argument using the fact that
$w_{\seqd}=\nu'_{\seqd'}$ shows that the right endpoints of $M$ are $d_1'+1,
d_2'+3,d'_3+5, d'_4+7$, and $9$, where the elements are listed in increasing
order. This completes the proof.
\end{proof}

The following corollary of Lemma \ref{lem:lr} will be used later in the proof of
Theorem \ref{thm:gen_func}. It is also interesting in its own right as a
fact about perfect matchings; see Remark \ref{rmk:lr}. 

\begin{cor}
\label{cor:lr}
If $M$ is a nonnesting perfect matching of $\{0,1,2,\dots,8,9\}$ with left endpoints $0<l_1<l_2<l_3<l_4$
  and right endpoints $r_1<r_2<r_3<r_4<9$, then we have an equality of multisets
  \begin{equation}
    \label{eq:lr}
    \mset{2i-l_i: 1\le i\le 4} = \mset{r_i-(2i-1): 1\le i\le 4}.
  \end{equation}
\end{cor}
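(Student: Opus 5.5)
We deduce Corollary \ref{cor:lr} from Lemma \ref{lem:lr} together with Lemma \ref{lem:d_d'} (ii). The plan is to show that every nonnesting perfect matching $M$ of $\ten$ arises as $w_{\seqd}(M_A)$ for some $\seqd\in\cals_M$. Once this is established, Lemma \ref{lem:lr} identifies the left endpoints of $M$ as $l_i=2i-d_i$ and the right endpoints as $r_i=d'_i+2i-1$, where $\seqd'=\seqd'_K$ and $K=\psi(\seqd)$. Hence
\[
\mset{2i-l_i}=\mset{d_i}=\mset{d'_i}=\mset{r_i-(2i-1)},
\]
and the middle equality is exactly the multiset statement of Lemma \ref{lem:d_d'} (ii).

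The key step is surjectivity of the map $\seqd\mapsto w_{\seqd}(M_A)$ from $\cals_M$ onto the set of nonnesting perfect matchings of $\ten$. We argue it by counting. First, $\abs{\cals_M}=42$ by Remark \ref{rmk:42}. Second, the number of nonnesting perfect matchings of a set of size $10$ is also the Catalan number $42$, again by Remark \ref{rmk:42}. It therefore suffices to prove injectivity.

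For injectivity, Lemma \ref{lem:lr} shows that the left endpoints of $w_{\seqd}(M_A)$ are $0<2-d_1<4-d_2<6-d_3<8-d_4$. These values determine $\seqd$ completely, so distinct sequences give matchings with distinct left endpoint sets, hence distinct matchings. We must also check that these left endpoints are really listed in increasing order, so that $l_i=2i-d_i$ is correct. This ordering is asserted in Lemma \ref{lem:lr} and follows from the condition $d_{i-1}\ge d_i-1$ used in its proof.

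The main obstacle is a bookkeeping point rather than a conceptual one: the identification above is valid only if the indexing of the $r_i$ in the corollary matches the increasing-order listing in Lemma \ref{lem:lr}. Both use increasing order, and $9$ is the largest right endpoint in both (the corollary's hypothesis excludes $9$ from $r_1,\dots,r_4$), so the two listings agree.

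As a fallback to the counting argument, one could use the classical fact that a nonnesting matching is determined by its set of left endpoints. The achievable left-endpoint sets $\{0<l_1<\dots<l_4\}$ with $l_i\le 2i$ (the ballot condition) number $42$, and these are exactly the sets $\{2i-d_i\}$ with $0\le d_i\le i$ and $d_{i-1}\ge d_i-1$. This gives surjectivity directly.
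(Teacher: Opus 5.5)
Your proposal is correct and is essentially the paper's own proof: injectivity of $\seqd\mapsto w_{\seqd}(M_A)$ on $\cals_M$ via the left endpoints $2i-d_i$, surjectivity onto the $42$ nonnesting matchings by counting, and then the multiset equality of Lemma~\ref{lem:d_d'}~(ii). One small correction to your bookkeeping paragraph concerns a slip inherited from the statement of Lemma~\ref{lem:lr} and repeated in the paper's proof: the right endpoints in increasing order are really $d'_4+1<d'_3+3<d'_2+5<d'_1+7<9$ (for instance $\seqd=(1,0,0,0)$ gives $\seqd'_K=(0,0,0,1)$ and $w_{\seqd}(M_A)=\{(0,2),(1,3),(4,5),(6,7),(8,9)\}$), so $r_i-(2i-1)=d'_{5-i}$ rather than $d'_i$, which is harmless since the conclusion is only a multiset equality.
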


\begin{proof}
In the setting and notation of Lemma \ref{lem:lr}, we can recover each sequence
$\seqd\in \cals_M$ from the matching $w_{\seqd}(M_A)\in \calm$: the left endpoints of
$M$ are $0, 2-d_1, 4-d_2, 6-d_3,$ and $8-d_4$, so if $l_1<l_2<l_3<l_4$ are the
nonzero left endpoints then $d=(2-l_1, 4-l_2, 6-l_3, 8-d_4)$. Similarly, we have
$d'_i=r_i-(2i-1)$ for all $1\le i\le 4$ if $r_1<r_2<r_3<r_4$ are the right
endpoints of $M$ not equal to 9. Thus, in the case of $M$, Equation
\eqref{eq:lr} is simply the statement that $\mset{d_i: 1\le i\le
4}=\mset{d'_i:1\le i\le 4}$, which holds by Lemma \ref{lem:d_d'} (ii). The
fact that we can recover $d$ from $w_{\seqd}(M_A)$ means that the map
$f:\cals_M\ra\calm, \seqd\mapsto w_{\seqd}(M_A)$ is injective. Since we have
$\abs{\cals_M}=\abs{\cali_M}=42=\abs{\calm}$ by Remark \ref{rmk:42}, this map is
also surjective, so it follows that Equation \eqref{eq:lr} holds for all perfect
matchings of $\ten$. 
\end{proof}

\begin{prop}
\label{prop:seq_to_sum}
Let $\seqd=(d_1,d_2,d_3,d_4)\in \cals$.
\begin{enumerate}
\item If $\seqd\in \cals_M$, then we have
\[
  \Psi(\seqd)=4e_9+2\sum_{i=1}^4 e_{2i-d_i}.
\]
\item If $\seqd\in \cals_F$, so that $\seqd=(1,2,4,j)$ for some $0\le j\le 7$, then we have 
\[
  \Psi(\seqd)=4e_9+2e_{8-j}.
\]
\end{enumerate} 
\end{prop}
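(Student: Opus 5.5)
For (i), I would work through the matching model. By definition $\Psi(\seqd)=\sigma(w_{\seqd}(\theta_A))$, and $\theta_A=R(M_A)$ by Example \ref{exa:mob_exa}. Since $w_{\seqd}=\nu_{\seqd}$ is a product of simple reflections from $S'\setminus\{s_9\}$ whenever $\seqd\in\cals_M$, we have $w_{\seqd}\in W'$. Indeed, by Definition \ref{defn:seq_to_word} (i) the letter $s_2$ occurs only in $w_3$, and only as its first (leftmost) letter. Because $d_3\le 3$, the suffix $\nu_3$ never contains $s_2$.

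Corollary \ref{cor:matching_iso} gives $W'$-equivariance of $M\mapsto R(M)$, so $w_{\seqd}(\theta_A)=R(w_{\seqd}(M_A))$. Lemma \ref{lem:lr} then identifies $w_{\seqd}(M_A)$ as the nonnesting matching with left endpoints $0,2-d_1,4-d_2,6-d_3,8-d_4$. Lemma \ref{lemm:mob_sum} yields $\sigma=4e_9+2\sum_i e_{2i-d_i}$, because the nonzero left endpoints are exactly $2i-d_i$. Note that $2i-d_i\ge i\ge 1$, so these endpoints are nonzero.

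For (ii), let $\seqd=(1,2,4,j)$. I would first compute $\Psi((1,2,4,0))$ directly and then induct on $j$. We have $\nu_{(1,2,4,0)}=s_2s_4s_5s_6\cdot s_3s_4\cdot s_1$. Applying $s_4s_5s_6s_3s_4s_1\in W'$ to $M_A$ via (i)'s mechanism gives a matching in $\calm$, namely $\nu_{(1,2,3,0)}(M_A)$ with left endpoints $0,1,2,3,8$ (sum $4e_9+2(e_1+e_2+e_3+e_8)$). Then $s_2=s_{m_{123}}$ is applied to the corresponding $R(M)$. I would compute this directly with Lemma \ref{lem:res_prep}, or more simply act on the sum: $\sigma(s_2 R)$ is not generally $s_2\sigma(R)$ because of the absolute values, so I would instead compute the basis $s_2(R(M))$ explicitly. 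The matching is $\{(0,4),(1,5),(2,6),(3,7),(8,9)\}$, so $R$ contains $r_{15},r_{26},r_{37},m_{89}$ and the triples with $z=4$. Reflecting by $m_{123}$ via Lemma \ref{lem:res_prep} gives eight roots whose sum, by Lemma \ref{lemm:fob_sum}, should be $4e_9+2e_8$.

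For the induction step, $\nu_{(1,2,4,j)}=u\,\nu_{(1,2,4,j-1)}$, where $u$ is one letter from $w_4$ lying in $W'$ and acting on labels as a transposition $(k,k+1)$. Since $\fob$ is $W'$-stable (a relabelling of a Fano labelling is again one), and $R(F)\mapsto R(uF)$ is equivariant, $\min\lc$ changes by the transposition. Proposition \ref{prop:Sigma} (iii) with the strictly increasing chain then forces the sum to move from $4e_9+2e_{9-j}$ to $4e_9+2e_{8-j}$. Concretely, the letters of $w_4$ read right to left, $s_8,s_7,\dots,s_3,s_1$, act as $(78),(67),\dots,(23),(12)$, each moving $x=8-(j-1)$ down by one.

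The main obstacle is the base computation of $s_2(R(M))$ and checking that it is an element of $\fob$ with $\min\lc=8$. I would do this by explicit root reflection as in Tables \ref{tab:fob1}--\ref{tab:fob2}.
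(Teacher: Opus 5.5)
Your proof is correct. Part (i) is exactly the paper's argument, and your check that $s_2$ never occurs in $\nu_{\seqd}$ when $d_3\le 3$ (so that $w_{\seqd}\in W'$) makes explicit a point the paper leaves implicit.

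For (ii) you take a genuinely different route.

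\textbf{The paper's route.} It works downward from the top. It writes $w_{\seqd}=\nu'\wnn^{-1}$, where $\nu'$ is the suffix of $s_8s_7s_6s_5s_4s_3s_1$ of length $7-j$. It then uses the cited identity $\wnn(\theta_C)=\theta_A$ (Proposition \ref{prop:Sigma} (iv)) together with $\theta_C=R(F_C)$ and $\lc(F_C)=\{1,9\}$. After that only relabellings are needed, and there is no root arithmetic.

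\textbf{Your route.} You work upward from $\theta_A$. You stay in the matching model until the single letter $s_2$ carries you into $\fob$, and then you relabel.

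\textbf{Your base case.} The computation you left as the main obstacle does go through. The reflection $s_2=s_{m_{123}}$ sends $p_{15},p_{26},p_{37},p_{489}$ to $m_{235},m_{136},m_{127},m_{567}$, all of which are positive. It fixes $m_{89},m_{145},m_{246},m_{347}$, since each is orthogonal to $m_{123}$. Hence $w_{(1,2,4,0)}(\theta_A)=R(F)$, where $F$ is the Fano labelling of $\{1,\dots,7\}$ with blocks
\[
\{1,2,7\},\{1,3,6\},\{1,4,5\},\{2,3,5\},\{2,4,6\},\{3,4,7\},\{5,6,7\},
\]
and $\lc(F)=\{8,9\}$.

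\textbf{Your induction step.} The second missing label is $9$, and $s_8,s_7,\dots,s_3,s_1$ never move it. So the $j$-th step, the transposition $(8-j,9-j)$, just lowers the minimum, giving $\lc=\{8-j,9\}$. Your appeal to Proposition \ref{prop:Sigma} (iii) there is unnecessary; the concrete relabelling is what does the work.

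\textbf{Comparison.} Your argument is self-contained within this paper. It needs neither the external fact $\wnn(\theta_C)=\theta_A$ from \cite{gx5} nor the identification $\theta_C=R(F_C)$. The price is one explicit eight-root reflection computation. The paper's argument avoids all computation but leans on those imported facts, and in exchange it ties the eight $\fob$-classes directly to the top nonnesting element $\theta_C$.
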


\begin{proof}
  Let $w_{\seqd}=(\varphi\circ \psi)(\seqd)$, so that
  $\Psi(\seqd)=\sigma(w_{\seqd}(\theta_A))$. Let $\theta_A, \theta_C, M_A,$ and
  $F_C$ be as defined in Example \ref{exa:minmax}, Remark \ref{rmk:theta_C}
  (iii), Example \ref{exa:mob_exa}, and Example \ref{exa:fob_exa}, respectively.
  Recall that $R(M_A)=\theta_A$ by Example \ref{exa:mob_exa} and
  $R(F_C)=\theta_C$ by  Example \ref{exa:fob_exa}. 
  If $\seqd\in \cals_M$, then since the map $f:\calm\ra \mob, M\mapsto R(M)$ is
  $W'$-equivariant by Corollary \ref{cor:matching_iso}, we have
  $w_{\seqd}(\theta_A)=w_{\seqd}(R(M_A))=R(w_{\seqd}(M_A))$. The element
  $w_{\seqd}(M_A)$ is a perfect
  matching with left endpoints $0, 2-d_1, 4-d_2, 6-d_3$, and $8-d_4$, so it
  follows from Lemma \ref{lemm:mob_sum} that \[
    \Psi(\seqd)=\sigma(w_{\seqd}(\theta_A))=\sigma(R(w_{\seqd}(M_A)))= 4e_9+2\sum_{i=1}^4
e_{2i-d_i}. \] This proves (i). 

Part (ii) can be proved similarly. The key is to note that if
$\psi(\seqd)=(1,2,4,j)$ and $\wnn$ is the fully commutative element with the property that
$\wnn(\theta_C)=\theta_A$ from Proposition \ref{prop:Sigma}, then the definitions of
$\varphi$ and the heap $P$ imply that we have $w_{\seqd}=\nu'\wnn\inverse$ where $\nu'$
is the group element expressed by the suffix of the word $u={s_8s_7s_6s_5s_4s_3s_1}$
consisting of the last $(7-j)$ letters, which are the labels of the top $(7-j)$
elements in the chain $C_4$ in the heap $P$. The simple reflections $s_1, s_3,
\dots, s_7$, and $s_8$ act as the transpositions $(12), (23), \dots, (78)$, and
$(89)$ in $W'$ on the elements of $\calf$, so the action of $\nu'$ 
on $F_C$ results in a labelling $F$ of the Fano plane for which
$\lc(F)=\{8-j,9\}$. The map $f:\calf\ra \fob, F\mapsto R(F)$ is $W'$-equivariant
by Definition \ref{defn:fob_construction}, so we have 
\[
  w_{\seqd}(\theta_A)=(\nu'\wnn\inverse)(\theta_A)=\nu'(\theta_C)=\nu'(R(F_C))=R(\nu'(F_C))=R(F),
\]
and it then follows from Lemma \ref{lemm:fob_sum} that 
\[
  \Psi(\seqd)=\sigma(w_{\seqd}(\theta_A))=\sigma(R(F))=4e_9+2e_{8-j},
\]
which proves (ii).
\end{proof}

\subsection{Generating functions}
\label{sec:gen_func}

Maintain the notation of Section \ref{sec:seq_to_sum}.  We are now ready to
prove Theorem \ref{thm:gen_func}. Recall that the theorem asserts that for any
sequence  $\seqd=(d_1,d_2,d_3,d_4)\in \cals$ and the element $\gamma=\Psi(\seqd)\in
\Sigma$, the level generating function for the $\sigma$-class in $\ob$
corresponding to $d$ can be expressed as
\begin{equation} \label{eq:seq_to_gen} \sum_{R\in
\sigma\inverse(\gamma)}q^{\rho(R)}=q^{\abs{\seqd}}\prod_{i=1}^4 [d_i+1]_q,
\end{equation}
where $\abs{\seqd}=d_1+d_2+d_3+d_4$.

\begin{proof}[Proof of Theorem \ref{thm:gen_func}]
  Let $\seqd=(d_1,d_2,d_3,d_4)\in \cals$ and $\gamma=\Psi(\seqd)$.
  By Proposition \ref{prop:seq_to_sum}, we have
  $\gamma=4e_9+2\sum_{i=1}^4e_{2i-d_i}$ if $\seqd\in \cals_M$
  and $\gamma=4e_9+2e_{8-d_4}$ if $\seqd\in \cals_F$.  
   Let 
  \[
       \calm_\gamma= \{M\in \calm: \text{the left endpoints of 
    $M$ are $0, 2-d_i, 4-d_2, 6-d_3,$ and $8-d_4$}\}
  \]
 and  
  \[
    \calf_\gamma=\{F\in \calf: \min(\lc(F))=8-d_4\}=\{F\in \calf: \lc(F)=\{8-d_4,y\},
    \text{where $8-d_4<y\le 9$}\}.
  \]
 By Lemma \ref{lemm:mob_sum}, Lemma \ref{lemm:fob_sum}, Corollary \ref{cor:all},
  and Proposition \ref{prop:seq_to_sum}, we have 
 \[
\sigma\inverse(\gamma)=\{R(M): M\in \calm_\gamma\}\se \mob\]
if $d\in \cals_M$
and 
\[
    \sigma\inverse(\gamma)=\{R(F): F\in \calf_\gamma\}\se \fob
 \] 
 if $d\in \cals_F$.

Suppose that $\seqd\in \cals_M$. Let $K=\psi(\seqd)$ and $\seqd'_K=(d'_1,d'_2,d'_3,d'_4)$ be
as in Lemma \ref{lem:lr}, so that $\calm_\gamma$ consists of the elements of
$\calm$ with right endpoints $r_1:=d'_1+1, r_2:=d'_2+3, r_3:=d'_3+5,
r_4:=d'_4+7$, and $r_5:=9$ by Lemma \ref{lem:lr}. In \cite[Section
3.3]{watson14}, Watson defines an equivalence relation denoted $\Delta$ on the
set of all perfect matchings of the set $[2n]=\{1,2,\dots, 2n\}$ for any
positive integer $n$, where each equivalence class for $\Delta$ consists of all
perfect matchings of the set $[2n]$ with a fixed set of right endpoints. (Watson
denotes such right endpoints as ``$d_i(\delta)$", where $\delta$ refers to
certain Dyck paths associated with perfect matching and the symbol ``$d_i$" is
not to be confused with our notation for the entries of the sequences $\seqd\in
\cals$.) It follows from the proof of \cite[Theorem 1]{watson14} that 
    \[
      \sum_{M\in \calm_\gamma} q^{\wt(M)}=q^{\sum_{i=1}^4
      (r_i-(2i-1))}\prod_{i=1}^4 [r_i-(2i-1)+1]_q=q^{d'_1+d'_2+d'_3+d'_4}\prod_{i=1}^4 
  [d'_i+1]_q,
    \]
where $\wt$ is the the weight function we defined in Definition
\ref{def:matching_features} (iii). We have $\wt(M)=h(M)$ by Proposition
\ref{prop:wt} and $h(M)=\rho(R(M))$ for any $M\in \calm$ by Proposition
\ref{prop:mob_level}, where $h$ is the height function from Definition
\ref{def:matching_features} (ii), so it follows that \[ \sum_{R\in
\sigma\inverse(\gamma)}q^{\rho(R)} = q^{d'_1+d'_2+d'_3+d'_4}\prod_{i=1}^4
[d'_i+1]_q. \] The fact that $\mset{d_i:1\le i\le 4}=\mset{d'_i: 1\le i\le 4}$
by Lemma \ref{lem:d_d'} (ii) then implies that \[ \sum_{R\in
  \sigma\inverse(\gamma)}q^{\rho(R)} = q^{d'_1+d'_2+d'_3+d'_4}\prod_{i=1}^4
  [d'_i+1]_q= q^{d_1+d_2+d_3+d_4}\prod_{i=1}^4
[d_i+1]_q=q^{\abs{\seqd}}\prod_{i=1}^4 [d_i+1]_q, \] which proves Equation
\eqref{eq:seq_to_gen} in this case. 

Now suppose that $\seqd=(d_1,d_2,d_3,d_4)\in \cals_F$, so that $\seqd=(1,2,4,j)$ for some
$0\le j\le 7$, and $\gamma=4e_9+2e_{8-j}$ by Proposition \ref{prop:seq_to_sum}
(ii). By \cite[Proposition 3.7, Proposition 4.12]{the240}, if $\calf'$ is the
set of all $30$ inequivalent labellings of the Fano plane with labels from a
fixed $7$-element set, then we have 
\begin{equation}
\label{eq:F7}
\sum_{F \in \calf'} q^{h(F)} = [2]_q [3]_q [5]_q.
\end{equation}
Define
\[
  \calf_y=\{F\in \calf_\gamma: \lc(F)=\{8-j,y\}\}
\]
for each $8-j<y\le 9$. We have
\[
\rho(R(F))=24-(8-j)-y+h(F)=h(F)+(j+16)-y,
\]
for all $F\in \calf_y$
by Proposition \ref{prop:fob_level}, and it then follows from Equation \eqref{eq:F7} that 
\begin{eqnarray*}
  \sum_{R\in \sigma\inverse(\gamma)}q^{\rho(R)}&=& \sum_{y:8-j<y\le 9}\sum_{F\in
  \calf_y}q^{h(F)+(j+16)-y}\\  
                                               &=& [2]_q [3]_q [5]_q\cdot \left(\sum_{y:-9\le -y< {j-8}}
  q^{(j+16)-y}\right)\\
                  &=& [2]_q [3]_q [5]_q\cdot  \left(\sum_{z=j+7}^{2j+7} q^{z}\right) \\ &= &q^{j+7}
[2]_q [3]_q [5]_q [j+1]_q\\ & = & q^{\abs{\seqd}}\prod_{i=1}^4 [d_i+1]_q.
\end{eqnarray*} 
This completes the proof. \end{proof}

\begin{theorem}
\label{thm:endgame}
We have
\begin{equation}
\label{eq:mob_gen}
\sum_{R\in \mob}q^{\rho(R)}=[3]_q[5]_q[7]_q[9]_q
\end{equation}
and 
\begin{equation}
\label{eq:fob_gen}
\sum_{R\in \fob}q^{\rho(R)}=q^7[3]_q[5]_q[8]_q[9]_q.
\end{equation}
The Poincar\'e series of $\ob$ is given by 
\[
PS_\ob(q)=[3]_q[5]_q[9]_q[15]_q.
\]
\end{theorem}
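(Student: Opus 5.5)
The plan is to sum the formula of Theorem \ref{thm:gen_func} over the $\sigma$-classes. By Corollary \ref{cor:all} and Proposition \ref{prop:seq_to_sum}, the classes in $\mob$ are indexed by $\cals_M$ and those in $\fob$ by $\cals_F$, so each of \eqref{eq:mob_gen} and \eqref{eq:fob_gen} reduces to a sum over the corresponding index set. The Poincar\'e series is then obtained by adding the two results.

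For $\fob$ we have $\cals_F=\{(1,2,4,j):0\le j\le 7\}$, and Theorem \ref{thm:gen_func} gives
\[
\sum_{R\in\fob}q^{\rho(R)}=\sum_{j=0}^7 q^{7+j}[2]_q[3]_q[5]_q[j+1]_q .
\]
Since $\sum_{j=0}^{7}q^j[j+1]_q=\sum_{j=0}^{7}(q^j+\dots+q^{2j})$, the key identity is
\[
\sum_{j=0}^{7}q^j[j+1]_q=\frac{[8]_q[9]_q}{[2]_q}.
\]
This is the $q$-analogue of $\binom{9}{2}$; more precisely it is the Gaussian binomial $\binom{9}{2}_q$ up to a power of $q$. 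I would prove it by induction on the upper limit $n$ (here $n=8$) using $\sum_{j<n}q^j[j+1]_q=[n]_q[n+1]_q/[2]_q$. The inductive step amounts to checking $[n+1]_q([n+2]_q-[n]_q)=[2]_q q^n[n+1]_q$, which holds because $[n+2]_q-[n]_q=q^n(1+q)$. Substituting gives $q^7[3]_q[5]_q[8]_q[9]_q$.

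For $\mob$ the sum runs over $\cals_M=\{\seqd:0\le d_i\le i\}$, and the terms factor:
\[
\sum_{R\in\mob}q^{\rho(R)}=\prod_{i=1}^4\sum_{d=0}^{i}q^d[d+1]_q .
\]
Applying the same identity to each factor gives
\[
\prod_{i=1}^{4}\frac{[i+1]_q[i+2]_q}{[2]_q}=\frac{[2][3]\cdot[3][4]\cdot[4][5]\cdot[5][6]}{[2]^4}.
\]
This should simplify to $[3][5][7][9]$, but it does not obviously do so. In fact the product counts $2\cdot3\cdot6\cdot10\cdot15\ne945$ at $q=1$, so the factorization step is wrong: the constraint $d_{i-1}\ge d_i-1$ has been dropped, and only $42$ sequences should contribute. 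This is the main obstacle. The sum must be taken over the genuine $42$ Catalan sequences in $\cals_M$, which do not form a product set.

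I would avoid the direct sum. Instead, I would use Corollary \ref{cor:matching_iso}, Proposition \ref{prop:mob_level} and Proposition \ref{prop:wt} to identify the left side of \eqref{eq:mob_gen} with $\sum_{M\in\calm}q^{\wt(M)}$ over all perfect matchings of a $10$-element set. Then I would use the classical result, due to Deodhar--Srinivasan or to Watson, that this weight generating function equals $[1]_q[3]_q[5]_q[7]_q[9]_q$. An alternative self-contained route is induction on the number of arcs: removing the arc containing the largest point, when it is paired with the $k$-th point from the right, decreases the weight by exactly $k-1$, which produces the factor $[2n-1]_q$ at each step.

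Finally, the Poincar\'e series is the sum of the two pieces:
\[
[3]_q[5]_q[9]_q\bigl([7]_q+q^7[8]_q\bigr)=[3]_q[5]_q[9]_q[15]_q,
\]
because $[7]_q+q^7[8]_q=[15]_q$.
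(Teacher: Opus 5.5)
Your proposal is correct and is essentially the paper's proof. For $\mob$ you reduce to $\sum_{M\in\calm}q^{\wt(M)}$ via Propositions \ref{prop:mob_level} and \ref{prop:wt} and cite Watson's product formula. For $\fob$ you sum Theorem \ref{thm:gen_func} over $\cals_F=\{(1,2,4,j):0\le j\le 7\}$ using $\sum_{j=0}^{7}q^j[j+1]_q=[8]_q[9]_q/[2]_q$, and you finish with $[7]_q+q^7[8]_q=[15]_q$. Your induction for that identity is fine, and the quantity is exactly $\binom{9}{2}_q$, with no extra power of $q$. You were right to abandon the product over $\cals_M$, since it ignores the constraints $d_{i-1}\ge d_i-1$. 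Your optional arc-removal induction is a valid self-contained substitute for the citation, because the arc through the largest point contributes exactly the size of its interior to $h=\wt$.
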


\begin{proof}
  As explained in the proof of Theorem \ref{thm:gen_func}, we have  
  \[
    \sum_{R\in \mob}q^{\rho(R)} = \sum_{M\in \calm}q^{\wt(M)}, 
  \]
  where the generating function on the right hand side is also studied in
  \cite{watson14}. It follows from \cite[Theorem 1]{watson14} that $\sum_{M\in
  \calm}q^{\wt(M)}=[3]_q[5]_q[7]_q[9]_q$, and Equation \eqref{eq:mob_gen}
  follows. 
 
  We have $\fob=\cup_{\seqd\in \cals_F} \sigma\inverse(\Psi(\seqd))$ where
  $\cals_F=\{(1,2,4,j): 0\le j \le 7\}$  by Lemma
  \ref{lemm:fob_sum}, Corollary \ref{cor:all}, and Proposition
  \ref{prop:seq_to_sum} (ii). A routine calculation shows that 
  \[
  \sum_{k=1}^8 q^{k-1}[k]_q = \frac{[8]_q[9]_q}{[2]_q}, 
  \]
and it then follows that
\[
\sum_{R\in \fob}q^{\rho(R)} = \sum_{j=0}^7
q^{j+7}[2][3][5][j+1]_q=q^7[3]_q[5]_q[8]_q[9]_q.
\]
This proves the first sentence of the theorem.

  It follows from the definition of quantum integers that
  $[7]_q+q^7[8]_q=[15]_q$. Since $\ob$ is the disjoint union of $\mob$ and
  $\fob$, Equations \eqref{eq:mob_gen} and \eqref{eq:fob_gen} imply that
  \[
    PS_\ob(q)=\sum_{R\in\ob}q^{\rho(R)}
    =[3]_q[5]_q[7]_q[9]_q
    + q^7[3]_q[5]_q[8]_q[9]_q=[3]_q[5]_q[9]_q[15]_q,
  \]
  which completes the proof.
\end{proof}

\section{Concluding remarks}
\label{sec:conclude}

\begin{rmk}\label{rmk:e7}
The results of this paper have direct analogues for the root system
$\Phi(E_7)$ of type $E_7$ and the collection $\ob'$ of the positive orthogonal
bases of type $E_7$, i.e., the collection of size-7 subsets of $\Phi(E_7)$ that
consist of
mutually orthogonal positive roots. The set $\ob'$ is a quasiparabolic
$W(E_7)$-set of size $135$ by the results of \cite{gx5,gx6}, and we can mimic
the methods of this paper to decompose $\ob'$ as the disjoint union of two
subsets $\mob'$ and $\fob'$, where $\mob'$ contains 105 orthogonal bases modelled by the perfect
matchings of the set $\{0,1,2,3,4,5,6,7\}$ and $\fob'$ contains 30
orthogonal bases modelled by the 30 inequivalent labellings of the Fano plane using
labels from the set $\{1,2,3,4,5,6,7\}$. The map $\sigma$ has $15$ fibres, 
with $\fob'$ forming a single fibre and 
$\mob'$ being the union of 14 fibres that correspond naturally to the 14 Dyck
paths of semilength 4. More generally, we can uniformly index all the 15 fibres
of $\sigma$ by 15 suitable length-3 sequences
obtained via a chain decomposition of the heap of a fully commutative
element, namely, the so-called ``nonnesting element" of type $E_7$ from
\cite[Section 6.2]{gx5}. This indexing scheme and the 
decomposition $\ob'=\mob'\cup \fob'$ are new, and they can be used to prove that
$PS_{\ob'}(q)=[3]_q [5]_q [9]_q$.
\end{rmk}

\begin{rmk}\label{rmk:restriction}
As mentioned in the introduction, there are many interesting examples of
quasiparabolic sets that can be constructed from the orthogonal sets of roots of
type $E_n$, including certain configurations of curves in del Pezzo surfaces of
degrees 2 and 3. The poset $(\ob, \leq)$ is the largest of these exceptional
configurations, and the results of this paper can be used to gain insight into
these smaller examples by restriction. For example, instead of mimicking the
methods of this paper to prove the facts about the set $\ob_7$ in Remark
\ref{rmk:e7} from scratch, one may also deduce them from the results of this
paper by identifying $\Phi(E_7)$ as the set of roots $\al=\sum_{i=1}^8
c_i\al_i\in \Phi(E_8)$ where $c_8=0$, or as the set of roots in $\Phi(E_8)$ that
are orthogonal to the highest root $\theta$ of $\Phi(E_8)$. For another example,
one can use the matching and Fano models for $\ob$ to study the set $T(E_8,8)$
from \cite[Section 5.1]{gx6}. This is a set of 630 quadruples of pairwise
orthogonal roots that are closely related to the aforementioned curve
configurations and that form another quasiparabolic set for $W(E_8)$. We can use
the matching and Fano models to obtain the level generating function for $T(E_8,
8)$ established in \cite[Remark 5.10 (i)]{gx6}. \end{rmk}

\begin{rmk}
\label{rmk:lr}
Corollary \ref{cor:lr} and its proof can be readily generalized to show that for
any positive integer $k$ and 
any nonnesting perfect matching of the set $\{1,2,\dots, 2k\}$
with left endpoints $1<l_1<l_2<\cdots <l_{k-1}$ and right endpoints
$r_1<r_2<\cdots<r_{k-1}<2k$, we have the multiset equality
\[
  \mset{2i-l_i:1\le i\le {k-1}}=\mset{r_i-(2i-1): 1\le
i\le k-1}.
\]
This generalization may be of independent interest in that it is a simple but
non-obvious equality concerning the endpoints of perfect matchings. It is also
interesting that the statement of this equality does not involve root systems or
even any permutation actions, even though our proof of Corollary \ref{cor:lr}
relied heavily on the actions of permutations.
\end{rmk}

\appendix
\begin{landscape}
\section{Wide tables}
\label{sec:wide_tables}

\begin{table}[b]
      \centering
\begin{tabular}{@{}lll@{}}
\toprule
  $\al$ & $\be\in \pra$ & $\be(\al)$ \\
  \midrule
  $p_{ij}, j=y$ & $m_{y9}$  & $p_{i9}<0$ \\
  $p_{ij}, j=z$ & $m_{i'j'z}$, where $(i',j')$ is any $i$-excluding middle arc& $p_{ii'j'}<0$ \\
  $p_{ij}, i\in \{y,z\}$ & $p_{yz9}$ & $m_{xj9}<0$, where $\{x,i\}=\{y,z\}$ \\
  $p_{ij}, \{i,j\}\cap\{y,z\}=\emptyset$ & $m_{j\tau(j)z}$ & $p_{i\tau(j)z}<0$\\
  \midrule
  $m_{i9}, i=z$ & $m_{y9}$, or $m_{i_aj_az}$ where $a\in \{1,2,3\}$ &
  $m_{iy}=p_{yz}$, or $p_{i_aj_a9}>0$ \\
  $m_{i9}, i\neq z$ & $m_{y9}, p_{yz9}, p_{i\tau(i)}, m_{i\tau(i)z}$ &
$m_{iy}=p_{yi}, m_{iyz}>0, m_{\tau(i)9}>0, p_{\tau(i)z9}>0$ \\
  \midrule
  $p_{ij9}, y\notin \{i,j\}$ & $m_{y9}$ & $p_{ijy}<0$\\
  $p_{iy9}, i\notin \{y,z\}$ & $p_{i\tau(i)}, p_{yz9}$, or $m_{i_aj_az}$ for the
   $i$-excluding middle arcs $\{i_a,j_a\}$  & $p_{\tau(i)y9}>0, p_{iz}$, or $m_{i_aj_az}(p_{iy9})>0$\\
  $p_{ij9}, j\neq \tau(i)$& $r_{i\tau(i)}, r_{j\tau(j)}, p_{i\tau(i)9},
  p_{j\tau(j)9}$   & $p_{j\tau(i)9}>0, p_{i\tau(j)9}>0, p_{j\tau(i)}, p_{i\tau(j)}$ \\ 
  \midrule
  $m_{ijk}, y\in \{i,j,k\}$ & $m_{y9}$ & $m_{ab9}<0$, where
  $\{a,b,y\}=\{i,j,k\}$\\
  $m_{ijk}, \{y,z\}\cap \{i,j,k\}=\emptyset$ & $m_{yz9}$ & $m_{yz9}(m_{ijk})<0$\\
  $m_{ijz}, y\notin \{i,j,z\}$ & $r_{i\tau(i)}, r_{j\tau(j)}, m_{i\tau(i)z},
  m_{j\tau(j)z}$ & $m_{\tau(i)jz}>0, m_{i\tau(j)z}>0, p_{\tau(i)j},
  p_{\tau(j)i}$\\
  \bottomrule
\end{tabular}
\caption{Analysis of $\be(\al)$, II}
\label{tab:mob2}
\end{table}
\begin{table}[b]
  \centering
\begin{tabular}{@{}lll@{}}
\toprule
  $\al$ & $\be\in \pra$ & $\be(\al)$ \\
  \midrule
  $p_{ij}, i\in L(F), \tau(i,9)=j$ & $m_{i'jk'}$, where $\{i',j,k'\}$ is any
 $i$-excluding block & $p_{ii'k'}<0$ \\
  $p_{ij}, i\in L(F),\tau(i,9)\neq j$ & $p_{ij'9}$, where $j'=\tau(i,9)$ & $m_{jj'9}<0$\\
  $p_{ij}, i\notin L(F), j\in L(F)$ & $m_{i'jk'}$, where $\{i',j,k'\}$ is any
 $9$-excluding block& $p_{ii'k'}<0$\\
  \midrule
  $m_{i9}, i\in \{x,y\}$ & $p_{xy}$, or $p_{i'j'9}$ for the blocks of the form
  $\{i',j',9\}$ & $m_{z9}>0$ where $\{i,z\}=\{x,y\}$, or $m_{ii'j'}>0$\\
  $m_{i9}, i\notin\{x,y\}$ & $r_{i'j'k'}$ for the blocks $\{i',j',k'\}$ such that $\abs{\{i',j',k'\}\cap \{i,9\}}=1$& $r_{i'j'k'}(m_{i9})>0$ \\
  \midrule
  $p_{xy9}$ & $m_{i'j'k'}$ for the $9$-excluding blocks $\{i',j',k'\}$  & $m_{i'j'k'}(p_{xy9})>0$\\
  $p_{iz9}$, $i\notin \{x,y\},z\in \{x,y\}$ & $p_{xy}$, $p_{i\tau(i,9)9}$, or
  $m_{i'j'\tau(i,9)}$ for the $i$-excluding blocks $\{i',j',\tau(i,9)\}$& $p_{iz'9}>0$,
$p_{z\tau(i,9)}$, or $m_{i'j'\tau(i,9)}(p_{iz9})>0$\\ 
  $p_{ij9}, \{i,j\}\cap \{x,y\}=\emptyset$ &
  $\gamma:=m_{\tau(i,9)\tau(j,9)\tau(i,j)}$,$p_{ij\tau(i,j)}$,
  $p_{i\tau(i,9)9}$, $p_{j\tau(j,9)9}$ & $\gamma(p_{ij9})>0$, $m_{k'9}>0$,
  $p_{j\tau(i,9)}, p_{i\tau(j,9)}$\\
  \midrule
  $m_{ijk}$, Case (1) & $p_{i'j'9}$, where $\{i',j',9\}\cap\{i,j,k\}=\emptyset$
  (see the proof of Theorem \ref{thm:fob_res}) & $p_{i'j'9}(m_{ijk})<0$\\
  $m_{ijk}$, Case (2) & $\gamma:=m_{\tau(i,j)\tau(i,k)\tau(j,k)}, m_{ij\tau(i,j)},m_{ik\tau(i,k)}, m_{jk\tau(j,k)}
  $ & $\gamma(m_{ijk})>0$, $p_{\tau(i,j)k},p_{\tau(i,k)j}, p_{\tau(j,k)i}$\\
  \bottomrule
\end{tabular}
\caption{Analysis of $\be(\al)$, IV}
\label{tab:fob2}
\end{table}
\end{landscape}

\subsection*{Acknowledgements}
We used the SageMath software system \cite{sage} for experimentation and
computer verification of theorems \ref{thm:gen_func}, \ref{thm:mob_res},
\ref{thm:fob_res}, and \ref{thm:endgame}. The second-named author was partially
supported by an AMS--Simons Research Enhancement Grant for PUI Faculty.

\bibliographystyle{plain} \bibliography{gx7.bib}

\end{document}